\definecolor{purple}{rgb}{0.3,0.0,.4}
\newif\iftutorial
\newcommand{\R}[0]{\mathbb{R}}
\newcommand{\Hc}[0]{\mathcal{H}}
\newcommand{\normiii}[1]{{\left\vert\kern-0.25ex\left\vert\kern-0.25ex\left\vert #1
    \right\vert\kern-0.25ex\right\vert\kern-0.25ex\right\vert}}
\def\P{{\mathbb{P}}} 
\def\E{{\mathbb{E}}} 
\DeclareMathOperator*{\argmin}{argmin}
\newcommand{\tr}{\textrm{tr}}
\algrenewcommand\algorithmicrequire{\textbf{Input:}}
\algrenewcommand\algorithmicensure{\textbf{Output:}}
\renewcommand{\Comment}[1]{%
\hfill \mbox{$\triangleright$\,#1}}
\algnewcommand\algorithmicstorage{\textbf{Storage:}}
\algnewcommand\Storage{\item[\algorithmicstorage]}
\crefname{hypothesis}{Hypothesis}{Hypotheses}
\title{Randomized Nystr{\"o}m Preconditioning\thanks{Submitted to the editors DATE.
\funding{ZF and MU were supported by
NSF Award IIS-1943131,
the ONR Young Investigator Program,
and the Alfred P. Sloan Foundation.
JAT was supported by ONR BRC Award N00014-18-1-2363 and NSF FRG Award 1952777.}}}
\author{Zachary Frangella\thanks{Center for Applied Mathematics, Cornell University, Ithaca, NY
  (\email{zjf4@cornell.edu}).}
\and Joel A.~Tropp\thanks{Department of Computing and Mathematical Sciences, California Institute of Technology, Pasadena, CA 91125-5000
  (\email{jtropp@cms.edu}).}
\and Madeleine Udell\thanks{Department of Operations Research and Information Engineering, Cornell University, Ithaca, NY(\email{udell@cornell.edu}).}}
\begin{document}

\maketitle

\begin{abstract}
    This paper introduces the Nystr{\"o}m PCG algorithm
    for solving a symmetric positive-definite linear system.
    The algorithm applies the randomized Nystr{\"o}m method
    to form a low-rank approximation of the matrix,
    which leads to an efficient preconditioner
    that can be deployed with the conjugate gradient algorithm.
    Theoretical analysis
    shows that preconditioned system has constant condition
    number as soon as the rank of the approximation
    is comparable with the number of effective degrees
    of freedom in the matrix.  The paper also develops
    adaptive methods that provably achieve similar performance
    without knowledge of the effective dimension.
    Numerical tests show that Nystr{\"o}m PCG can rapidly solve large linear systems
    that arise in data analysis problems,
    and it surpasses several competing methods from the literature.
\end{abstract}

\begin{keywords}
  Conjugate gradient, cross-validation, kernel method, linear system,
  Nystr{\"o}m approximation, preconditioner, randomized algorithm,
  regularized least-squares, ridge regression.
\end{keywords}

\begin{AMS}
  65F08,  	
  68W20,  	
  65F55,     
  65F22     
\end{AMS}

\section{Motivation}

In their elegant 1997 textbook on numerical linear algebra~\cite{trefethen1997numerical},
Trefethen and Bau write,

\begin{quotation}
``In ending this book with the subject of preconditioners, we find ourselves at
the philosophical center of the scientific computing of the future...
Nothing will be more central to computational science in the next century
than the art of transforming a problem that appears intractable into another
whose solution can be approximated rapidly. For Krylov subspace matrix iterations,
this is preconditioning...
we can only guess where this idea will take us.''
\end{quotation}

The next century has since arrived, and one of the most fruitful developments
in matrix computations has been the emergence of new algorithms that use randomness
in an essential way.  This paper explores a topic at the nexus of preconditioning
and randomized numerical linear algebra.  We will show how to use a
randomized matrix approximation algorithm to construct a preconditioner
for an important class of linear systems that arises throughout
data analysis and scientific computing.

\subsection{The Preconditioner}

Consider the regularized linear system
\begin{equation} \label{eqn:reg-linsys-intro}
(A + \mu I) x = b
\quad\text{where $A \in \R^{n \times n}$ is symmetric psd and $\mu \geq 0$.}
\end{equation}
Here and elsewhere, psd abbreviates the term ``positive semidefinite.''
This type of linear system emerges whenever we solve a regularized
least-squares problem.
We will design a class of preconditioners for the problem~\cref{eqn:reg-linsys-intro}.

Throughout this paper, we assume that we can access the matrix
$A$ through matrix--vector products $x \mapsto Ax$, commonly
known as \textit{matvecs}.  The algorithms that we develop
will economize on the number of matvecs, and they may not be
appropriate in settings where matvecs are very expensive or
there are cheaper ways to interact with the matrix.

For a rank parameter $\ell \in \mathbb{N}$, the randomized Nystr{\"o}m approximation of $A$ takes the form
\begin{equation} \label{eqn:randnys-intro}
\hat{A}_{\textrm{nys}} = (A\Omega)(\Omega^{T}A\Omega)^{\dagger}(A\Omega)^{T}
\quad\text{where $\Omega \in \R^{n \times \ell}$ is standard normal.}
\end{equation}
This matrix provides the best psd approximation of $A$ whose range coincides with the range of the sketch $A \Omega$.  The randomness in the construction ensures that $\hat{A}_{\textrm{nys}}$ is a good approximation to the original matrix $A$ with high probability~\cite[Sec.~14]{MartTroppSurvey}.

We can form the Nystr{\"o}m approximation with sketch size $\ell$, using $\ell$ matvecs with $A$,
plus some extra arithmetic.  See \cref{alg:RandNysAppx} for the implementation details. 

Given the eigenvalue decomposition $\hat{A}_{\textrm{nys}} = U \hat{\Lambda} U^T$
of the randomized Nystr{\"o}m approximation, we construct the Nystr{\"o}m preconditioner:
\begin{equation} \label{eqn:precond-intro}
P = \frac{1}{\hat{\lambda}_{\ell}+\mu}U(\hat{\Lambda}+\mu I)U^{T}+(I-UU^T).
\end{equation}
In a slight abuse of terminology, we refer to $\ell$ as the rank of the Nystr{\"o}m preconditioner.
The key point is that we can solve the linear system $Py = c$ very efficiently,
and the action of $P^{-1}$ dramatically reduces the condition number of the
regularized matrix $A_{\mu} = A + \mu I$.

We propose to use~\cref{eqn:precond-intro} in conjunction with the
preconditioned conjugate gradient (PCG) algorithm.  Each iteration of PCG
involves a single matvec with $A$, and a single linear solve with $P$.
When the preconditioned matrix $P^{-1} A_{\mu}$ has a modest condition
number, the algorithm converges to a solution of~\cref{eqn:reg-linsys-intro}
very quickly.  See~\cref{alg:NysPCG} for pseudocode for Nystr{\"o}m PCG.

The randomized Nystr{\"o}m preconditioner~\cref{eqn:precond-intro} was suggested by P.-G.~Martinsson in the survey~\cite[Sec.~17]{MartTroppSurvey}, but it has not been implemented or analyzed.

\subsection{Guarantees}

  This paper contains the first comprehensive study of the preconditioner~\cref{eqn:precond-intro}, including theoretical analysis and testing on prototypical problems from data analysis and machine learning.  One of the main contributions is a rigorous method for choosing the rank $\ell$ to guarantee good performance, along with an adaptive rank selection procedure that performs well in practice.

A key quantity in our analysis is the \textit{effective dimension} of the regularized matrix $A+\mu I$.  That is, 
\begin{equation}
\label{EffDimDef}
    d_{\textrm{eff}}(\mu) = \tr\left(A(A+\mu I)^{-1}\right) = \sum_{j=1}^{n}\frac{\lambda_j(A)}{\lambda_j(A)+\mu}.\
\end{equation}
The effective dimension measures the degrees of freedom of the problem after regularization. 
It may be viewed as a (smoothed) count of the eigenvalues larger than $\mu$.
Many real-world matrices exhibit strong spectral decay, so the effective dimension is typically much smaller than the nominal dimension $n$.
As we will discuss, the effective dimension also plays a role in a number of machine learning papers~\cite{el2014fast,avron2017faster,bach2013sharp,chowdhury2018randomized,lacotte2020effective} that consider randomized algorithms for solving regularized linear systems.

Our theory tells us the randomized Nystr{\"o}m preconditioner $P$ 
is successful when its rank $\ell$ is proportional to the effective dimension.

\begin{theorem}[Randomized Nystr{\"o}m Preconditioner] 
\label{MainThm}
Let $A\in \mathbb{S}_{n}^+(\mathbb{R})$ be a psd matrix, and write $A_\mu = A+\mu I$ where the regularization parameter $\mu > 0$.  Define the effective dimension $d_{\textup{eff}}(\mu)$ as in~\cref{EffDimDef}.  Construct the randomized preconditioner $P$ from~\cref{eqn:randnys-intro,eqn:precond-intro} with rank parameter $\ell = 2 \,\lceil 1.5 \,d_{\textup{eff}}(\mu)\rceil+1$.  Then the condition number of the preconditioned system satisfies
\begin{equation} \label{eqn:expect-cond-intro}
\E\big[ \kappa_{2}(P^{-1/2}A_\mu P^{-1/2}) \big] < 28.
\end{equation}
\end{theorem}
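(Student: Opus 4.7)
The plan is to reduce the expected condition number to (i) a deterministic structural bound that exploits the explicit form of $P$ and (ii) a probabilistic tail bound on the Nystr{\"o}m error. Set $\hat{A} := \hat{A}_{\textup{nys}}$ and $E := A - \hat{A}$, and recall the standard fact that $E \succeq 0$.

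First I would use that $P$ is block-diagonal with respect to the orthogonal decomposition $\R^n = \operatorname{range}(U) \oplus \operatorname{range}(U)^\perp$ to compute $P^{\pm 1/2}$ blockwise, and then split
\[
P^{-1/2} A_\mu P^{-1/2} = S + T, \quad S := P^{-1/2}(\hat{A} + \mu I)P^{-1/2}, \quad T := P^{-1/2} E P^{-1/2}.
\]
A short calculation gives $S = (\hat{\lambda}_\ell + \mu)\, U U^T + \mu\, (I - U U^T)$, so $\lambda_{\min}(S) = \mu$ and $\lambda_{\max}(S) = \hat{\lambda}_\ell + \mu$. The smallest eigenvalue of $P$ equals $1$ (attained in the $\ell$-th direction of $U$ and on $\operatorname{range}(U)^\perp$), whence $\|P^{-1}\| = 1$ and $\|T\| \leq \|E\|$. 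Since $T \succeq 0$, combining these via Weyl's inequalities yields the deterministic bound
\[
\kappa_2(P^{-1/2} A_\mu P^{-1/2}) \leq 1 + \frac{\hat{\lambda}_\ell + \|E\|}{\mu}.
\]

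Second, I would eliminate $\hat{\lambda}_\ell$ without invoking randomness. Since $\hat{A} \preceq A$, Weyl monotonicity gives $\hat{\lambda}_\ell \leq \lambda_\ell(A)$. A pigeonhole on \cref{EffDimDef} shows that at most $2 d_{\textup{eff}}(\mu)$ eigenvalues of $A$ exceed $\mu$, so the prescription $\ell > 2 d_{\textup{eff}}(\mu)$ forces $\hat{\lambda}_\ell \leq \lambda_\ell(A) \leq \mu$. The same counting produces $\lambda_{k+1}(A) \leq 2\mu$ and the tail bound $\sum_{j>k} \lambda_j \leq 3\mu\, d_{\textup{eff}}(\mu)$ for $k := \lceil 1.5\, d_{\textup{eff}}(\mu) \rceil$.

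The main effort, and final step, is bounding $\E\|E\| = \E\|A - \hat{A}\|$ by an absolute-constant multiple of $\mu$. For this I would invoke a Halko--Martinsson--Tropp style expectation bound for Gaussian Nystr{\"o}m approximations with oversampling $p = \ell - k$; the rank prescription $\ell = 2k+1$ delivers $p = k+1$, precisely the regime in which an estimate of the form $\E\|E\| \lesssim \lambda_{k+1} + (\sqrt{k+p}/p)\,(\sum_{j>k}\lambda_j^2)^{1/2}$ is essentially dimension-free. Substituting $\lambda_{k+1} \leq 2\mu$ and $\sum_{j>k}\lambda_j^2 \leq \lambda_{k+1}\sum_{j>k}\lambda_j \leq 6\mu^2\, d_{\textup{eff}}(\mu)$ together with $p \geq 1.5\, d_{\textup{eff}}(\mu)$ cancels the $d_{\textup{eff}}(\mu)$ dependence and yields $\E\|E\| \leq C\mu$ for an explicit absolute constant $C$. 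Combining this with the deterministic inequality above gives $\E[\kappa_2(P^{-1/2} A_\mu P^{-1/2})] \leq 2 + C$, and fitting this inside the advertised bound $< 28$ is a matter of using the sharpest available Gaussian operator-norm estimate for $E$.
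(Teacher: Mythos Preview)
Your three-step plan is exactly the paper's proof: the deterministic bound $\kappa_2\leq 1+(\hat{\lambda}_\ell+\|E\|)/\mu$ is \cref{NysPCGThm}, the pigeonhole estimates on the eigenvalues via $d_{\textup{eff}}(\mu)$ are \cref{KeyLemma}, and the endgame is to show $\E\|E\|\leq C\mu$.

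One substantive correction is needed in that last step. The estimate you invoke,
\[
\E\|E\|\ \lesssim\ \lambda_{k+1}+\frac{\sqrt{k+p}}{p}\Big(\sum_{j>k}\lambda_j^2\Big)^{1/2},
\]
is the Halko--Martinsson--Tropp spectral-norm bound for the randomized range finder applied to $A$ itself; it is not the Nystr\"om bound. Since $A-\hat{A}_{\textup{nys}}=A^{1/2}(I-\Pi)A^{1/2}$ with $\Pi$ the projector onto $\operatorname{range}(A^{1/2}\Omega)$, the Nystr\"om error is the \emph{square} of the range-finder error for $A^{1/2}$. The tail singular values of $A^{1/2}$ are $\sqrt{\lambda_j}$, so the correct tail quantity is $\sum_{j>k}\lambda_j$ rather than the $\ell^2$ sum, and a second-moment inequality (not the first-moment HMT bound squared) is what is required. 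The paper supplies this as \cref{NysExpBound}:
\[
\E\|E\|\ \leq\ \Big(1+\frac{2k}{p-1}\Big)\lambda_{k+1}+\frac{2\textup{e}^{2}(k+p)}{p^{2}-1}\sum_{j>k}\lambda_j.
\]
With $p=k+1$, $\lambda_{k+1}\leq 2\mu$, and the sharp tail estimate $\sum_{j>k}\lambda_j\leq \mu\,d_{\textup{eff}}(\mu)$ from \cref{KeyLemma} (your $3\mu\,d_{\textup{eff}}(\mu)$ is valid but too loose by a factor of three to land under $28$), this gives $\E\|E\|<26\mu$ and hence $\E[\kappa_2]<28$. So your strategy is right; only the Gaussian bound needs to be swapped for the Nystr\"om-specific one, and the tail-sum estimate tightened.
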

\noindent
\Cref{MainThm} is a restatement of \cref{NysPCGEffDimThm}.

Simple probability bounds follow from~\cref{eqn:expect-cond-intro} via Markov's inequality.  For example,
\[
\P\big\{ \kappa_{2}(P^{-1/2}A_\mu P^{-1/2}) \leq 56 \big\} > 1/2.
\]
The main consequence of \cref{MainThm} is a convergence theorem for PCG with the randomized Nystr{\"o}m preconditioner.

\begin{corollary}[Nystr{\"o}m PCG: Convergence]
Construct the preconditioner $P$ as in \cref{MainThm}, and condition on the event 
$\{ \kappa_2(P^{-1/2}A_\mu P^{-1/2}) \leq 56 \}$.
Solve the regularized linear system~\cref{eqn:reg-linsys-intro} using Nystr{\"o}m PCG, starting with an initial iterate $x_0 = 0$.  After $t$ iterations, the relative error $\delta_t$ satisfies
\[
\delta_t \coloneqq \frac{\|x_{t}-x_{\star}\|_{\rm PCG}}{\|x_{\star}\|_{\rm PCG}}< 2 \cdot \left(0.77 \right)^{t}
\quad\text{where $A_{\mu} x_{\star} = b$.}
\]
The error norm is defined as $\| u \|_{\rm PCG}^2 = u^T (P^{-1/2} A_{\mu} P^{-1/2} ) u$.
In particular, $t\geq \lceil 3.9 \log(2/\epsilon) \rceil$ iterations
suffice to achieve relative error $\epsilon$.
\end{corollary}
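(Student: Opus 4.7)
The plan is to reduce the claim to the classical convergence bound for conjugate gradient. Running PCG on $A_\mu x = b$ with preconditioner $P$ is mathematically equivalent to running plain CG on the symmetrized system $\tilde A y = \tilde b$, where $\tilde A = P^{-1/2} A_\mu P^{-1/2}$, $\tilde b = P^{-1/2} b$, and $y = P^{1/2} x$. Under this change of variables, the PCG energy norm $\|u\|_{\rm PCG}^2 = u^{T} \tilde A u$ is exactly the $\tilde A$-norm on the corresponding $y$-iterates, so the classical CG error analysis transfers verbatim.

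First, I would invoke the textbook CG convergence estimate (e.g.\ from Trefethen--Bau or Saad): for CG applied to a symmetric positive-definite system with condition number $\kappa$, the iterates satisfy
\begin{equation*}
\|y_t - y_\star\|_{\tilde A} \;\leq\; 2 \, \rho^{t} \, \|y_0 - y_\star\|_{\tilde A},
\qquad \rho \;=\; \frac{\sqrt{\kappa}-1}{\sqrt{\kappa}+1}.
\end{equation*}
Translating back to the $x$-variable gives the same bound with the $\|\cdot\|_{\rm PCG}$ norm. Since $x_0 = 0$, the denominator in $\delta_t$ is just $\|x_\star\|_{\rm PCG}$, so $\delta_t \leq 2 \rho^t$.

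Next, I would use the conditioning hypothesis $\kappa := \kappa_2(\tilde A) \leq 56$. The contraction factor $\rho$ is monotone increasing in $\kappa$, so
\begin{equation*}
\rho \;\leq\; \frac{\sqrt{56}-1}{\sqrt{56}+1} \;<\; 0.77,
\end{equation*}
which yields the advertised bound $\delta_t < 2\cdot (0.77)^t$. For the iteration count, solving $2\cdot (0.77)^t \leq \epsilon$ gives $t \geq \log(2/\epsilon)/\log(1/0.77)$; a numerical check shows $1/\log(1/0.77) < 3.9$, so $t \geq \lceil 3.9\log(2/\epsilon)\rceil$ iterations suffice.

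There is no real obstacle here: the result is essentially a direct substitution of the condition-number bound from \cref{MainThm} into the standard CG rate. The only points that need care are (i) verifying that the $\|\cdot\|_{\rm PCG}$ norm defined in the statement is indeed the energy norm in which CG contracts (which is immediate from the symmetrized formulation), and (ii) making the constant $0.77$ and the coefficient $3.9$ come out cleanly, which is a short arithmetic check on $\sqrt{56}$.
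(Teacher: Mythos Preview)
Your proposal is correct and matches the paper's proof essentially line for line: both invoke the standard CG convergence bound \cite[Theorem~38.5]{trefethen1997numerical}, substitute the conditioned bound $\kappa \leq 56$, compute $(\sqrt{56}-1)/(\sqrt{56}+1) < 0.77$, and solve for $t$.
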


Although \cref{MainThm} gives an interpretable bound for the rank $\ell$ of the preconditioner,
we cannot instantiate it without knowledge of the effective dimension.  To address this shortcoming,
we have designed adaptive methods for selecting the rank in practice (\cref{sec:adaptive-rank}).

Finally,  as part of our investigation, we will also develop a detailed understanding
of Nystr{\"o}m sketch-and-solve, a popular algorithm in
the machine learning literature \cite{el2014fast,bach2013sharp}.
Our analysis highlights the deficiencies of Nystr{\"o}m sketch-and-solve relative to Nystr{\"o}m PCG.

\subsection{Example: Ridge Regression}
\label{IntroRidgeEx}

As a concrete example, we consider the $\ell^2$ regularized least-squares problem, also known as ridge regression.
This problem takes the form
\begin{equation}\label{eq:ridge-intro}
\mathrm{minimize}_{x\in \mathbb{R}^{d}} \quad \frac{1}{2n}\|Gx-b\|^{2}+\frac{\mu}{2}\|x\|^{2},
\end{equation}
where $G\in \R^{n\times d}$ and $b\in \R^{n}$ and $\mu>0$.
By calculus, the solution to \cref{eq:ridge-intro} also satisfies the regularized system of linear equations
\begin{equation} 
 \label{eq-RidgeRegression}
 \left(\frac{1}{n}G^{T}G+\mu I\right)x = \frac{1}{n}G^{T}b.   
\end{equation}
A direct method to solve \cref{eq-RidgeRegression} requires $O(nd^{2})$ flops,
which is prohibitive when $n$ and $d$ are both large. 
Instead, when $n$ and $d$ are large, iterative algorithms, such as the conjugate gradient method (CG), become the tools of choice.
Unfortunately, the ridge regression linear system~\cref{eq-RidgeRegression} is often very ill-conditioned, and CG converges very slowly. 

Nystr{\"o}m PCG can dramatically accelerate the solution of \cref{eq-RidgeRegression}. 
As an example, consider the {shuttle-rf} dataset (\cref{section:RidgeRegression}).  The matrix $G$ has dimension $43,300 \times 10,000$, while the preconditioner is based on a Nystr{\"o}m approximation with rank $\ell = 800$.
\Cref{fig:shuttlerfDemo} shows the progress of the residual as a function of the iteration count.  Nystr{\"o}m PCG converges to machine precision in 13 iterations, while CG stalls.

\begin{figure}[t]
    \centering
    \includegraphics[width=0.65\textwidth]{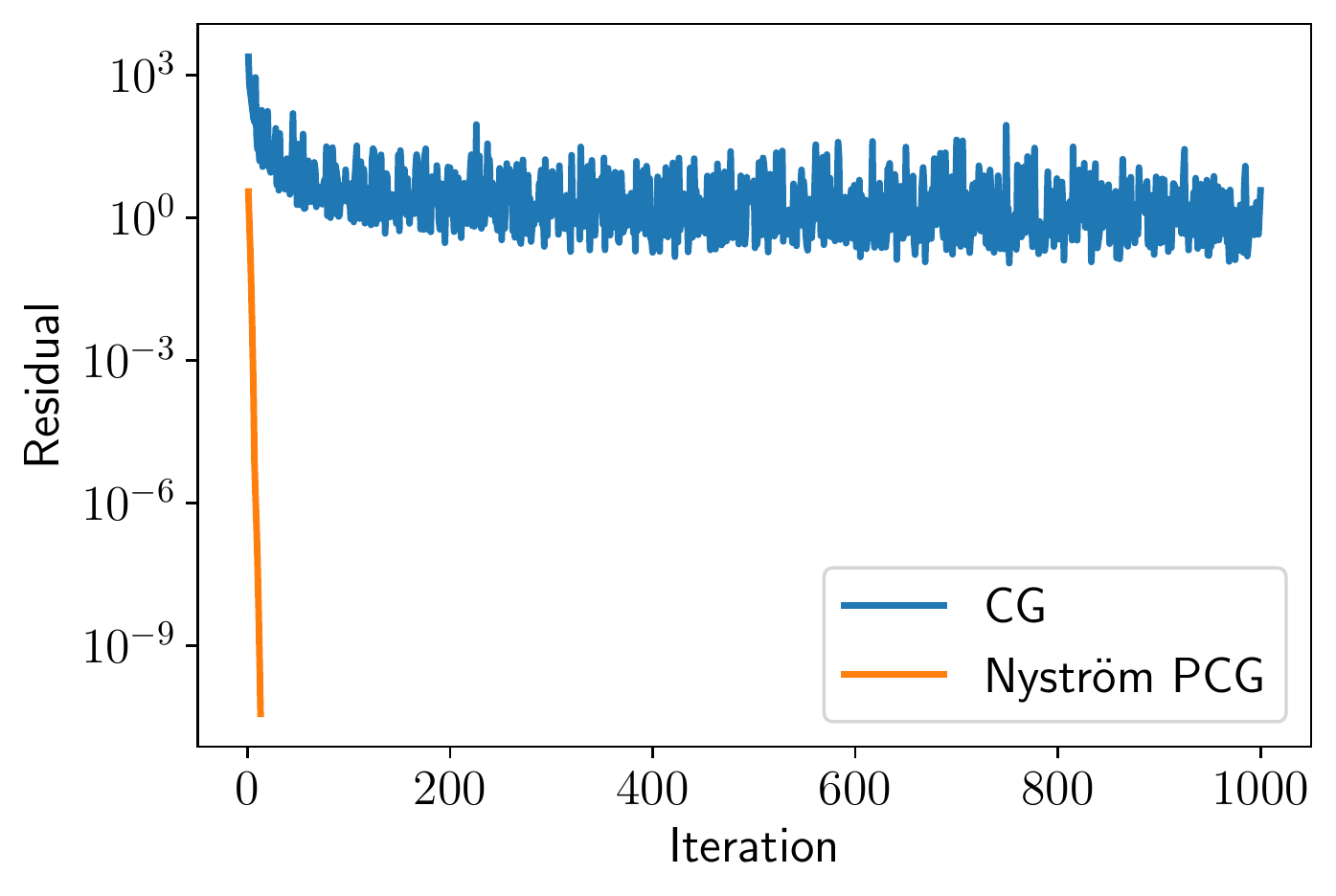}
    \caption{\label{fig:shuttlerfDemo}
    \textbf{Ridge regression: CG versus Nystr{\"o}m PCG.}
    For the {shuttle-rf} data set, Nystr{\"o}m PCG converges to machine precision in 13 iterations while CG stalls.  See~\cref{IntroRidgeEx,section:RidgeRegression}.}
\end{figure}

\subsection{Comparison to prior randomized preconditioners}
Prior proposals for randomized preconditioners \cite{avron2010blendenpik, meng2014lsrn, rokhlin2008fast}
accelerate the solution of highly overdetermined or underdetermined least-squares problems using the sketch-and-precondition paradigm \cite[Sec.~10]{MartTroppSurvey}.
For $n\geq d$, these methods require $\Omega(d^{3})$ 
computation to factor the preconditioner.

In contrast, the randomized Nystr{\"o}m preconditioner 
applies to any symmetric positive-definite linear system
and can be significantly faster for regularized problems. 
See \cref{sec:preconditioner-comparison} more details.

\subsection{Roadmap}
\Cref{section:NystromApproximation} contains an overview of the Nystr{\"o}m approximation and its key properties.  \Cref{section:AppxRegInverse} studies the role of the Nystr{\"o}m approximation in estimating the inverse of the regularized matrix.  We analyze the Nystr{\"o}m sketch-and-solve method in \Cref{Section:NystromSketchSolve}, and we give a rigorous performance bound for this algorithm.  \Cref{section:NystromPCG} presents a full treatment of Nystr{\"o}m PCG, including theoretical results and guidance on numerical implementation.  Computational experiments in \Cref{section:NumericalExperiments} demonstrate the power of Nystr{\"o}m PCG for three different applications involving real data sets.

\subsection{Notation}

We write $\mathbb{S}_{n}(\R)$ for the linear space of $n\times n$ real symmetric matrices, while $\mathbb{S}_n^+(\R)$ denotes the convex cone of real psd matrices. The symbol $\preceq$ denotes the Loewner order on $\mathbb{S}_{n}(\R)$.  That is, $A \preceq B$ if and only if the eigenvalues of $B-A$ are all nonnegative.
The function $\mathrm{tr}[\cdot]$ returns the trace of a square matrix.  The map $\lambda_j(A)$ returns the $j$th largest eigenvalue of $A$; we may omit the matrix if it is clear.  As usual, $\kappa_2$ denotes the $\ell^2$ condition number.
We write $\|M\|$ for the spectral norm of a matrix $M$.  For a psd matrix $A$, we write $\| u \|_{A}^2 = u^T A u$ for the $A$-norm.
Given $A\in \mathbb{S}_{n}(\R)$ and $1\leq \ell\leq n$, the symbol $\lfloor A\rfloor_{\ell}$ refers to any best rank-$\ell$ approximation to $A$ relative to the spectral norm.
For $A \in \mathbb{S}_{n}^+(\R)$ and $\mu \geq 0$, the regularized matrix is abbreviated $A_{\mu} = A + \mu I$. 
For $A\in \mathbb{S}_{n}^{+}(\R)$ and $\mu>0$ effective dimension of $A_{\mu}$ is defined as $d_{\textrm{eff}}(\mu) = \tr(A(A+\mu I))^{-1}$.
For $A\in \mathbb{S}_{n}^{+}(\R)$, the $p$-stable rank of $A$ is defined as $\textrm{sr}_{p}(A) = \lambda_{p}^{-1}\sum^{n}_{j>p}\lambda_{j}$.
For $A\in \mathbb{S}_{n}^{+}(\R)$, we denote the time taken to compute a matvec with $A$ by $T_{\textrm{mv}}$.
\section{The Nystr{\"o}m approximation}
\label{section:NystromApproximation}

Let us begin with a review of the Nystr{\"o}m approximation and the randomized Nystr{\"o}m approximation. 

\subsection{Definition and basic properties}

The Nystr{\"o}m approximation is a natural way to construct a low-rank psd approximation of a psd matrix $A\in \mathbb{S}_{n}^+(\R)$. Let $X\in \R^{n\times \ell}$ be an arbitrary test matrix.  The \textit{Nystr{\"o}m approximation} of $A$ with respect to the range of $X$ is defined by
\begin{equation}
    \label{NysApprox}
    A\langle X \rangle = (AX)(X^{T}AX)^{\dagger}(AX)^{T} \in \mathbb{S}_n^+(\R).
\end{equation}
The Nystr{\"o}m approximation is the best psd approximation of $A$
whose range coincides with the range of $AX$.  It has a deep relationship with the Schur complement and with Cholesky factorization~\cite[Sec.~14]{MartTroppSurvey}.

The Nystr{\"o}m approximation enjoys several elementary properties that we record in the following lemma. 

\begin{lemma}
\label{NysPropLemma}
Let $A\langle X\rangle \in \mathbb{S}_n^+(\R)$ be a Nystr{\"o}m approximation of the psd matrix $A \in \mathbb{S}_{n}^+(\mathbb{R})$.
Then 
\begin{enumerate}
    \item The approximation $A\langle X\rangle$ is psd and has rank at most $\ell$.
    \item The approximation $A\langle X \rangle$ depends only on $\mathrm{range}(X)$.
    \item\label{it:nys-order} In the Loewner order, $A\langle X\rangle\preceq A$.
    \item\label{it:nys-eigs} In particular, the eigenvalues satisfy $\lambda_j(\hat{A}) \leq \lambda_{j}(A)$ for each $1 \leq j \leq n$.
\end{enumerate}
\end{lemma}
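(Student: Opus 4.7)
The plan is to derive all four properties from a single reformulation of the Nystr{\"o}m approximation as a conjugated orthogonal projection. Specifically, I would set $Y = A^{1/2} X$ and verify the identity $A\langle X\rangle = A^{1/2} P_Y A^{1/2}$, where $P_Y = Y(Y^T Y)^\dagger Y^T$ is the orthogonal projector onto $\mathrm{range}(Y)$. To check this, substitute $AX = A^{1/2} Y$ and $X^T A X = Y^T Y$ into the definition~\cref{NysApprox} to get $A\langle X\rangle = A^{1/2}\, Y(Y^T Y)^\dagger Y^T\, A^{1/2}$, and recognize the middle factor as the standard pseudoinverse formula for the orthogonal projector onto $\mathrm{range}(Y)$.

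Once this representation is in hand, each of the four claims follows in a line or two. For (1), the factorization $A\langle X\rangle = (A^{1/2} P_Y)(A^{1/2} P_Y)^T$ exhibits the matrix as a Gram product, hence psd, with rank at most $\mathrm{rank}(P_Y) \leq \mathrm{rank}(Y) \leq \ell$. For (2), if $\mathrm{range}(X') = \mathrm{range}(X)$ then $\mathrm{range}(A^{1/2} X') = \mathrm{range}(A^{1/2} X)$, so the projector is unchanged and $A\langle X'\rangle = A\langle X\rangle$. For (3), since $P_Y \preceq I$ and conjugation by $A^{1/2}$ preserves the Loewner order, $A\langle X\rangle = A^{1/2} P_Y A^{1/2} \preceq A^{1/2} A^{1/2} = A$. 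Finally, for (4), Weyl's monotonicity theorem (equivalently, the Courant--Fischer minimax characterization) converts the Loewner inequality of (3) into the eigenvalue bound $\lambda_j(A\langle X\rangle) \leq \lambda_j(A)$ for every $j$.

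The main obstacle --- really the only nontrivial step --- is verifying the pseudoinverse identity $Y(Y^T Y)^\dagger Y^T = P_{\mathrm{range}(Y)}$ in the case where $Y$ does not have full column rank. This is standard and can be handled cleanly via a thin SVD $Y = U_r \Sigma_r V_r^T$ with $\Sigma_r$ square and invertible, which yields $Y(Y^T Y)^\dagger Y^T = U_r U_r^T$, the orthogonal projector onto $\mathrm{range}(U_r) = \mathrm{range}(Y)$. After this, everything else amounts to book-keeping with the projector representation.
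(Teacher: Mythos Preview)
Your proposal is correct and matches the paper's approach exactly: the paper does not spell out a full proof but remarks that item~\ref{it:nys-order} ``is a consequence of the fact that we may express $\hat{A}_{\mathrm{nys}} = A^{1/2}\Pi A^{1/2}$, where $\Pi$ is an orthogonal projector,'' which is precisely the representation $A\langle X\rangle = A^{1/2}P_Y A^{1/2}$ you establish and then exploit for all four items.
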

The proof of \cref{NysPropLemma}, \cref{it:nys-order} is not completely obvious.  It is a consequence of the fact that we may express $\hat{A}_{\textrm{nys}} = A^{1/2}\Pi A^{1/2}$, where $\Pi$ is an orthogonal projector.

\subsection{Randomized Nystr{\"o}m approximation}
\label{section:RandNysAppx}
\begin{algorithm}[t]
	\caption{Randomized Nystr{\"o}m Approximation \cite{li2017algorithm,tropp2017fixed}}
	\label{alg:RandNysAppx}
	\begin{algorithmic}[1] 
		\Require{Positive-semidefinite matrix $A \in \mathbb{S}_n^+(\R)$, rank $\ell$}
		\Ensure{ Nystr{\"o}m approximation in factored form $\hat{A}_{\mathrm{nys}} = U \hat{\Lambda} U^T$}
		
		\vspace{0.5pc}

		\State $\Omega = \textrm{randn}(n, \ell)$
		    \Comment{Gaussian test matrix}
		\State $\Omega = \textrm{qr}(\Omega,0)$
		    \Comment{Thin QR decomposition}
		\State $Y = A\Omega$
		    \Comment{$\ell$ matvecs with $A$}
		\State $\nu = \textrm{eps}(\textrm{norm}(Y,\textrm{'fro'}))$
		    \Comment{Compute shift}
		\State $Y_{\nu} = Y+\nu\Omega$
		    \Comment{Shift for stability}
		\State $C = \textrm{chol}(\Omega^{T}Y_{\nu})$
		\State $B = Y_{\nu}/C$
		\State $[U,\Sigma,\sim] = \textrm{svd}(B,0)$
		    \Comment{Thin SVD}
		\State $\hat{\Lambda} = \max\{0,\Sigma^{2}-\nu I\}$ \Comment{Remove shift, compute eigs}
	\end{algorithmic}
\end{algorithm}

How should we choose the test matrix $X$ so that the Nystr{\"o}m approximation $A\langle X\rangle$ provides a good low-rank model for $A$?  Surprisingly, we can obtain a good approximation simply by drawing the test matrix at random.
See~\cite{tropp2017fixed} for theoretical justification of this claim.

Let us outline the construction of the randomized Nystr{\"o}m approximation.
Draw a standard normal test matrix $\Omega \in \R^{n \times \ell}$ where $\ell$ 
is the sketch size, and compute the sketch $Y = A\Omega$ By \cref{NysPropLemma}, the sketch size $\ell$ is equal to the rank of $\hat{A}_{\textrm{nys}}$ with probability 1,
hence we use these terms interchangeably.
The Nystr{\"o}m approximation~\cref{NysApprox} is constructed 
directly from the test matrix $\Omega$ and the sketch $Y$:
\begin{equation} \label{eqn:nys-unstable}
    \hat{A}_{\textrm{nys}} = A\langle\Omega\rangle = Y(\Omega^{T}Y)^{\dagger}Y^{T}.
\end{equation}
The formula~\cref{eqn:nys-unstable} is not numerically sound.
We refer the reader to~\cref{alg:RandNysAppx}
for a stable and efficient implementation of the
randomized Nystr{\"o}m approximation~\cite{li2017algorithm,tropp2017fixed}.
Conveniently, \cref{alg:RandNysAppx} returns the truncated eigendecomposition $\hat{A}_{\textrm{nys}} = U\hat{\Lambda}U^{T}$, where $U \in \mathbb{R}^{n\times \ell}$ is an orthonormal matrix whose columns
are eigenvectors and  $\hat{\Lambda}\in \R^{\ell\times \ell}$
is a diagonal matrix listing the eigenvalues, which we often abbreviate
as $\hat{\lambda}_1, \dots, \hat{\lambda}_{\ell}$.

The randomized Nystr{\"o}m approximation described in this section
has a key difference from the Nystr{\"o}m approximations that
have traditionally been used in the machine learning literature~\cite{el2014fast,bach2013sharp,derezinski2020improved,gittens2011spectral,williams2001using}.
In machine learning settings, the Nystr{\"o}m approximation is usually constructed from a sketch $Y$ that samples random columns from the matrix (i.e., the random test matrix $\Omega$ has 1-sparse columns).  In contrast, \cref{alg:RandNysAppx} computes a sketch $Y$ via random projection (i.e., the test matrix $\Omega$ is standard normal).  In most applications, we have strong reasons (\cref{RandProjvsColSamp}) for preferring random projections to column sampling.

\subsubsection{Cost of randomized Nystr{\"o}m approximation} 
\label{section:NystromAppxCost}
 Throughout the paper, we write $T_{\textrm{mv}}$ for the time required to compute a matrix--vector product (matvec) with $A$. Forming the sketch $Y = A\Omega$ with sketch size $\ell$ requires $\ell$ matvecs, which costs $T_{\textrm{mv}}\ell$. The other steps in the algorithm have arithmetic cost $O(n\ell^{2})$.  Hence, the total computational cost of \cref{alg:RandNysAppx} is $O( T_{\textrm{mv}}\ell + \ell^{2} n )$ operations. The storage cost is $O(\ell n)$ floating-point numbers. 

For \cref{alg:RandNysAppx}, the worst-case performance occurs when $A$ is dense and unstructured.  In this case, forming $Y$ costs $O(n^{2}\ell)$ operations.  However, if we have access to the columns of $A$ then we may reduce the cost of forming $Y$ to $O(n^{2}\log \ell)$ by using a structured test matrix $\Omega$, such as a scrambled subsampled randomized Fourier transform (SSRFT) map or a sparse map~\cite{MartTroppSurvey,tropp2017fixed}.

\subsubsection{A priori guarantees for the randomized Nystr{\"o}m approximation}

In this section, we present an a priori error bound for the randomized Nystr{\"o}m approximation.
The result improves over previous analyses~\cite{gittens2011spectral,gittens2013revisiting,tropp2017fixed}
by sharpening the error terms.  This refinement is critical for the analysis of the preconditioner.
\begin{proposition}[Randomized Nystr{\"o}m approximation: Error]  
\label{NysExpBound} 
Consider a psd matrix $A\in \mathbb{S}_{n}^+(\R)$ with 
eigenvalues $\lambda_{1} \geq \cdots \geq \lambda_{n}$. 
Choose a sketch size $\ell \geq 4$, and draw a standard normal test matrix $\Omega \in \R^{n\times \ell}$. 
Then the rank-$\ell$ Nystr{\"o}m approximation $\hat{A}_{\textup{nys}}$ computed by \cref{alg:RandNysAppx} satisfies
\begin{equation}
\label{eq-RandNysAppxErrBnd}
    \mathbb{E}\|A-\hat{A}_{\textup{nys}}\| \leq \min_{2 \leq p \leq \ell-2} \left[ \left(1+\frac{2(\ell-p)}{p-1}\right)\lambda_{\ell-p+1}
    +\frac{2\textup{e}^{2}\ell}{p^{2}-1}\left(\sum_{j>\ell - p}\lambda_{j}\right) \right].
\end{equation}
\end{proposition}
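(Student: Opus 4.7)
The plan is to adapt the Halko--Martinsson--Tropp framework for analyzing Gaussian range-finders to the psd Nyström setting, while sharpening the Gaussian moment estimates to recover the stated constants. First I would use the factorization $\hat{A}_{\textup{nys}} = A^{1/2} P A^{1/2}$, where $P$ is the orthogonal projector onto $\operatorname{range}(A^{1/2}\Omega)$; this identity underlies Nyström dominance $\hat{A}_{\textup{nys}} \preceq A$ and reduces the analysis to estimating $\|(I-P)A^{1/2}\|^2 = \|A - \hat{A}_{\textup{nys}}\|$. Diagonalize $A = V\Lambda V^T$, fix the free parameter $p$ in the bound, and partition $V = [V_1\ V_2]$ so that $V_1 \in \R^{n\times k}$ spans the top $k := \ell - p$ eigenvectors. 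By rotational invariance of the Gaussian law, $\Omega_1 := V_1^T\Omega$ and $\Omega_2 := V_2^T\Omega$ are independent standard Gaussians of sizes $k\times\ell$ and $(n-k)\times\ell$, and $\Omega_1$ has full row rank almost surely because $k < \ell$.

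Next I would establish a deterministic structural estimate. Working in the eigenbasis of $A$ with the HMT projection calculus, together with the psd sandwich structure, yields a bound of the schematic form
\[
\|A - \hat{A}_{\textup{nys}}\| \;\leq\; \|\Lambda_2\| \;+\; 2\,\|\Lambda_2^{1/2}\Omega_2\Omega_1^\dagger\|^2.
\]
The first summand is $\lambda_{\ell-p+1}$ and accounts for the unavoidable spectral tail; the second measures how poorly $\Omega$ illuminates the dominant eigenspace. To bound the expectation of the right-hand side I would condition on $\Omega_1$ and apply the matrix Gaussian inequality $\E\|B G C\|^2 \leq \|B\|^2\|C\|_F^2 + \|B\|_F^2\|C\|^2$ with $B = \Lambda_2^{1/2}$, $G = \Omega_2$, $C = \Omega_1^\dagger$, which gives
\[
\E\bigl[\|\Lambda_2^{1/2}\Omega_2\Omega_1^\dagger\|^2 \,\bigm|\, \Omega_1\bigr] \;\leq\; \lambda_{\ell-p+1}\,\|\Omega_1^\dagger\|_F^2 \;+\; \Bigl(\textstyle\sum_{j>\ell-p}\lambda_j\Bigr)\|\Omega_1^\dagger\|^2.
\]
Averaging over $\Omega_1$ then requires sharp moment estimates for the pseudoinverse of a $k\times\ell$ Gaussian matrix with oversampling $p \geq 2$: the exact Wishart identity $\E\|\Omega_1^\dagger\|_F^2 = k/(\ell-k-1) = (\ell-p)/(p-1)$ and the HMT chi-squared tail bound $\E\|\Omega_1^\dagger\|^2 \leq \mathrm{e}^2 \ell / (p^2 - 1)$. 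Substituting these estimates and minimizing over admissible $p \in \{2,\dots,\ell-2\}$ produces the claimed inequality.

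The main obstacle is the sharpness of constants rather than the overall structure of the argument. Previous analyses introduce slack in several places: the deterministic bound often comes with a factor $2\|\Lambda_2\|$ rather than $\|\Lambda_2\|$ after a crude $(a+b)^2 \leq 2(a^2+b^2)$ split, and the spectral pseudoinverse moment is frequently replaced by its looser Frobenius counterpart, which destroys the $\mathrm{e}^2\ell/(p^2-1)$ scaling. The proposed proof avoids both losses by (a) using a Young-type split in the deterministic step so that the leading eigenvalue term enters only once, and (b) preserving the separation between $\|\cdot\|$ and $\|\cdot\|_F$ contributions of $B$ and $C$ in the conditional Gaussian inequality. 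Verifying these two refinements is the delicate part of the bookkeeping; once they are in place, the remainder of the proof is routine algebra.
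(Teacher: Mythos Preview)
Your overall architecture matches the paper's, but the bookkeeping you propose does not close. The matrix Gaussian second-moment inequality you invoke,
\[
\E\|BGC\|^{2} \;\leq\; \|B\|^{2}\|C\|_{F}^{2} + \|B\|_{F}^{2}\|C\|^{2},
\]
is \emph{false}. Take $B = I_n$ and $C = I_n$, so that the right-hand side equals $2n$, while the left-hand side is $\E\|G\|^{2}$ for an $n\times n$ standard Gaussian matrix; by the Bai--Yin / Gordon asymptotics $\|G\| \sim 2\sqrt{n}$, so $\E\|G\|^{2} \sim 4n$, which violates the bound for all large $n$. The cross term $2\|B\|\|C\|_{F}\cdot\|B\|_{F}\|C\|$ cannot simply be dropped, and that is precisely where the factor $2$ you need must come from.

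The paper resolves this in the opposite order from your plan. The deterministic HMT structural bound already holds with coefficient $1$, not $2$:
\[
\|A - \hat{A}_{\textup{nys}}\| \;=\; \|(I-P)A^{1/2}\|^{2} \;\leq\; \|\Lambda_{2}^{1/2}\|^{2} + \|\Lambda_{2}^{1/2}\Omega_{2}\Omega_{1}^{\dagger}\|^{2},
\]
so no ``Young-type split'' is needed at that stage. The factor $2$ is then produced in the Gaussian step by first proving a \emph{Squared Chevet} inequality,
\[
\E\|SGT\|^{2} \;\leq\; \bigl(\|S\|\,\|T\|_{F} + \|S\|_{F}\,\|T\|\bigr)^{2},
\]
via a Slepian comparison argument (this is the novel technical ingredient), and only afterwards applying $(a+b)^{2}\leq 2a^{2}+2b^{2}$. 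Conditioning on $\Omega_{1}$ and inserting the sharp pseudoinverse moments $\E\|\Omega_{1}^{\dagger}\|_{F}^{2} = (\ell-p)/(p-1)$ and $\E\|\Omega_{1}^{\dagger}\|^{2} \leq \mathrm{e}^{2}\ell/(p^{2}-1)$ then yields the stated constants. In short: the $2$ lives on the Gaussian side, not the deterministic side, and getting it there requires the squared Chevet bound rather than the additive second-moment estimate you wrote down.
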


\noindent
The proof of \cref{NysExpBound} may be found in \cref{Section:NysExpBoundPf}. 

\cref{NysExpBound} shows that, in expectation, the randomized Nystr{\"o}m approximation $\hat{A}_{\textrm{nys}}$ provides a good rank-$\ell$ approximation to $A$.  The first term in the bound is comparable with the spectral-norm error $\lambda_{\ell - p + 1}$ in the optimal rank-$(\ell - p)$ approximation, $\lfloor A \rfloor_{\ell - p}$.  The second term in the bound is comparable with the trace-norm error $\sum_{j > \ell - p} \lambda_j$ in the optimal rank-$(\ell - p)$ approximation.

\Cref{NysExpBound} is better understood via the following simplification.

\begin{corollary}[Randomized Nystr{\"o}m approximation]
 \label{NysExpCorr}
Instate the assumptions of~\cref{NysExpBound}.  For $p \geq 2$ and $\ell = 2p - 1$, we have the bound 
\[\mathbb{E}\|A-\hat{A}_{\textup{nys}}\| \leq \left(3+\frac{4\textup{e}^{2}}{p}\textup{sr}_{p}(A)\right)\lambda_{p}.\]
The $p$-stable rank, $\textup{sr}_{p}(A) = \lambda_{p}^{-1}\sum^{n}_{j = p}\lambda_{j}$, reflects decay in the tail eigenvalues.
\end{corollary}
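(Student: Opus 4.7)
The plan is to derive the corollary as a direct specialization of \cref{NysExpBound} by making a specific choice for the free parameter inside the minimum. Rename the inner parameter in \cref{eq-RandNysAppxErrBnd} to $q$ (to avoid clashing with the corollary's $p$), set $\ell = 2p - 1$, and choose $q = p$. This choice lies in the allowed range $2 \leq q \leq \ell - 2 = 2p - 3$ for $p \geq 3$; the case $p = 2$ corresponds to $\ell = 3$, which falls outside the hypothesis $\ell \geq 4$ of \cref{NysExpBound} and can be handled by a direct check or simply by the implicit assumption that $p$ is large enough that $\ell \geq 4$.

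With $q = p$ and $\ell = 2p - 1$, the first summand of \cref{eq-RandNysAppxErrBnd} simplifies cleanly: the prefactor becomes
\[
1 + \frac{2(\ell - q)}{q - 1} = 1 + \frac{2(p - 1)}{p - 1} = 3,
\]
and the eigenvalue index is $\ell - q + 1 = p$, so the first term is exactly $3\lambda_p$. For the second summand, the tail sum is $\sum_{j > \ell - q} \lambda_j = \sum_{j \geq p} \lambda_j = \lambda_p \cdot \textup{sr}_p(A)$ by the definition of the $p$-stable rank used in the corollary statement. It remains to simplify the prefactor
\[
\frac{2\textup{e}^2 \ell}{q^2 - 1} = \frac{2\textup{e}^2 (2p - 1)}{(p-1)(p+1)}.
\]

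The only non-trivial step is bounding this by $4\textup{e}^2/p$. Cross-multiplying, this reduces to verifying $p(2p-1) \leq 2(p^2 - 1)$, i.e., $-p \leq -2$, which holds exactly when $p \geq 2$. Combining these pieces gives
\[
\mathbb{E}\|A - \hat{A}_{\textup{nys}}\| \leq 3\lambda_p + \frac{4\textup{e}^2}{p}\, \textup{sr}_p(A)\, \lambda_p = \left(3 + \frac{4\textup{e}^2}{p}\textup{sr}_p(A)\right)\lambda_p,
\]
which is the desired inequality.

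The main obstacle, such as it is, is entirely bookkeeping: matching the two different uses of the symbol $p$ (one as the inner minimization parameter in \cref{NysExpBound}, the other as the fixed index in the corollary) and keeping track of the off-by-one in the tail sum so that it aligns with the stable-rank definition. Beyond that, the proof is a one-line specialization followed by the elementary inequality above, so no probabilistic or spectral-decomposition work is needed here — everything of substance has already been absorbed into \cref{NysExpBound}.
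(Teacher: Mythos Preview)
Your proposal is correct and follows exactly the route the paper implicitly intends: the paper does not spell out a separate proof of \cref{NysExpCorr}, treating it as an immediate specialization of \cref{NysExpBound} with the natural choice $q=p$ when $\ell=2p-1$. Your observation that the hypothesis $\ell\geq 4$ inherited from \cref{NysExpBound} already forces $p\geq 3$ (so the edge case $p=2$ is outside the instated assumptions) resolves the range issue cleanly, and your verification of $\frac{2\textup{e}^2(2p-1)}{p^2-1}\leq\frac{4\textup{e}^2}{p}$ via $p(2p-1)\leq 2(p^2-1)$ is exactly the needed arithmetic.
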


\cref{NysExpCorr} shows that the Nystr{\"o}m approximation error is on the order of $\lambda_{p}$ when the rank parameter $\ell = 2p - 1$.  The constant depends on the $p$-stable rank $\textrm{sr}_p(A)$, which is small when the tail eigenvalues decay quickly starting at $\lambda_p$.  This bound is critical for establishing our main results (\cref{NysInvBnd,NysPCGEffDimThm}).   

\subsubsection{Random projection versus column sampling}
\label{RandProjvsColSamp}

Most papers in the machine learning literature~\cite{el2014fast,bach2013sharp} construct Nystr{\"o}m approximations by sampling columns at random from an adaptive distribution.
In contrast, for most applications, we advocate using an oblivious random projection of the matrix to construct a Nystr{\"o}m approximation.

Random projection has several advantages over column sampling. 
First, column sampling may not be practical when we only have black-box matvec access to the matrix,
while random projections are natural in this setting.
Second, it can be very expensive to obtain adaptive distributions for column sampling.  Indeed, computing approximate ridge leverage scores costs just as much as solving the ridge regression problem directly using random projections \cite[Theorem 2]{drineas2012fast}. 
Third, even with a good sampling distribution, column sampling
produces higher variance results than random projection,
so it is far less reliable.

On the other hand, we have found that there are a few applications where it is more effective to compute a randomized Nystr{\"o}m preconditioner using column sampling in lieu of random projections.  In particular, this seems to be the case for kernel ridge regression (\cref{section:KernelRidgeRegression}).  Indeed, the entries of the kernel matrix are given by an explicit formula, so we can extract full columns with ease.  Sampling $\ell$ columns may cost only $O(\ell n)$ operations, whereas a single matvec generally costs $O(n^2)$.  Furthermore, kernel matrices usually exhibit fast spectral decay, which limits the performance loss that results from using column sampling in lieu of random projection.

\section{Approximating the regularized inverse}
\label{section:AppxRegInverse}

Let us return to the regularized linear system~\cref{eqn:reg-linsys-intro}.
The solution to the problem has the form $x_{\star} = (A+\mu I)^{-1}b$. 
Given a good approximation $\hat{A}$ to the matrix $A$, it is natural to ask whether $\hat{x} = (\hat{A} + \mu I)^{-1} b$ is a good approximation to the desired solution $x_{\star}$.

There are many reasons why we might prefer to use $\hat{A}$ in place of $A$.  In particular, we may be able to solve linear systems in the matrix $\hat{A} + \mu I$ more efficiently.  On the other hand, the utility of this approach depends on how well the inverse $(\hat{A} + \mu I)^{-1}$ approximates the desired inverse $(A + \mu I)^{-1}$.  The next result addresses this question for a wide class of approximations that includes the Nystr{\"o}m approximation. 



\begin{proposition}[Regularized inverses]
\label{GenInvErrBnd}
Consider psd matrices $A$, $\hat{A}\in \mathbb{S}_n^+(\R)$, and assume that the difference $E = A-\hat{A}$ is psd.  Fix $\mu > 0$.  Then
\begin{equation}
    \label{eq-InvInequality}
    \|(\hat{A}+\mu I)^{-1}-(A+\mu I)^{-1}\|\leq \frac{1}{\mu}\frac{\|E\|}{\|E\|+\mu}.
\end{equation}
Furthermore, the bound~\eqref{eq-InvInequality} is attained when $\hat{A} = \lfloor A\rfloor_{\ell}$ for $1\leq \ell \leq n$.
\end{proposition}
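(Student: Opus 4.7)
The plan is to combine the first-order resolvent identity with a semidefinite comparison. The identity
\[
(\hat{A}+\mu I)^{-1} - (A+\mu I)^{-1} = (\hat{A}+\mu I)^{-1} E (A+\mu I)^{-1},
\]
together with the hypothesis $E \succeq 0$ and the operator monotonicity of $X \mapsto -X^{-1}$ on positive definite matrices, already shows that the left-hand side is psd. A direct submultiplicativity bound on the right-hand side would yield only $\|E\|/\mu^2$, which is strictly weaker than the target when $\|E\|$ is comparable to or larger than $\mu$, and in particular does not capture the saturation of the error as $\|E\| \to \infty$.

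The sharper bound comes from controlling $A$ as a whole rather than treating $E$ in isolation. Since $E \preceq \|E\|\, I$, we have $A + \mu I \preceq \hat{A} + (\mu + \|E\|) I$, and operator monotonicity of the inverse gives $(A+\mu I)^{-1} \succeq (\hat{A} + (\mu + \|E\|) I)^{-1}$. Subtracting both sides from $(\hat{A}+\mu I)^{-1}$ produces the sandwich
\[
0 \preceq (\hat{A}+\mu I)^{-1} - (A+\mu I)^{-1} \preceq (\hat{A}+\mu I)^{-1} - \bigl(\hat{A}+(\mu+\|E\|)I\bigr)^{-1}.
\]
The matrix on the right is a scalar function of $\hat{A}$ alone, so its spectral norm is the maximum over eigenvalues $\hat{\lambda} \geq 0$ of $\hat{A}$ of
\[
\frac{1}{\hat{\lambda}+\mu} - \frac{1}{\hat{\lambda}+\mu+\|E\|} = \frac{\|E\|}{(\hat{\lambda}+\mu)(\hat{\lambda}+\mu+\|E\|)}.
\]
This expression is decreasing in $\hat{\lambda}$, so the supremum is attained at $\hat{\lambda} = 0$ and equals $\|E\|/[\mu(\mu+\|E\|)]$, which is the target.

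For the attainment claim, set $\hat{A} = \lfloor A\rfloor_\ell$. Then $A$ and $\hat{A}$ share an eigenbasis, and $\|E\| = \lambda_{\ell+1}(A)$; along the eigendirection of $\lambda_{\ell+1}$ the matrix $\hat{A}$ has eigenvalue $0$, so the scalar computation above becomes an equality in that direction, showing the bound is achieved. The only real obstacle is recognizing that crude submultiplicativity $\|(\hat{A}+\mu I)^{-1}\|\cdot\|E\|\cdot\|(A+\mu I)^{-1}\|$ is too weak; the fix is to dominate $A+\mu I$ by a quantity depending on $\hat{A}$ alone and then invoke operator monotonicity of the inverse, after which everything reduces to a one-variable maximization.
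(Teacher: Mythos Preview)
Your proof is correct but follows a different route from the paper's. The paper invokes Bhatia's Lemma~X.1.4, which for psd $X,Y$ and any unitarily invariant norm gives $\normiii{(X+I)^{-1}-(X+Y+I)^{-1}}\le\normiii{Y(Y+I)^{-1}}$; after scaling by $\mu$ this yields $\|(\hat{A}+\mu I)^{-1}-(A+\mu I)^{-1}\|\le\mu^{-1}\|E(E+\mu I)^{-1}\|$, and the paper then bounds $\|E(E+\mu I)^{-1}\|$ using the operator monotonicity of $t\mapsto t/(t+\mu)$ together with $E\preceq\|E\|I$. You instead bypass both of those ingredients: your Loewner sandwich $0\preceq(\hat{A}+\mu I)^{-1}-(A+\mu I)^{-1}\preceq(\hat{A}+\mu I)^{-1}-(\hat{A}+(\mu+\|E\|)I)^{-1}$ uses only the operator antimonotonicity of the inverse, and the right-hand side is a scalar function of $\hat{A}$ whose spectral norm reduces to a one-variable maximization. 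Your argument is more elementary and self-contained for the spectral norm; the paper's route, via Bhatia, has the advantage of extending verbatim to every unitarily invariant norm. For attainment, both arguments coincide: diagonalize simultaneously and read off the eigenvalue $1/\mu-1/(\lambda_{\ell+1}+\mu)$ on the $(\ell+1)$st eigendirection.
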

The proof of \cref{GenInvErrBnd} may be found in \cref{section:GenInvErrBndPf}.  It is based on~\cite[Lemma~X.1.4]{bhatia2013matrix}.


\cref{GenInvErrBnd} has an appealing interpretation.  When $\|A - \hat{A}\|$ is small in comparison to the regularization parameter $\mu$, then the approximate inverse $(\hat{A} + \mu I)^{-1}$ can serve in place of the inverse $(A + \mu I)^{-1}$.  Note that $\|(A + \mu I)^{-1}\| \leq 1/\mu$, so we can view~\cref{eq-InvInequality} as a relative error bound.

\section{Nystr{\"o}m sketch-and-solve}

The simplest mechanism for using the Nystr{\"o}m approximation
is an algorithm called Nystr{\"o}m sketch-and-solve.  This
section introduces the method, its implementation, and its history.
We also provide a general theoretical analysis that sheds light on its performance.  In spite of its popularity, the Nystr{\"o}m sketch-and-solve method is rarely worth serious consideration.

\begin{algorithm}[t]
	\caption{Nystr{\"o}m sketch-and-solve} 
	\label{alg:NysSketchSolve}
	\begin{algorithmic}[1] 
		\Require{Psd matrix $A \in \mathbb{S}_n^+(\R)$, right-hand side $b$, regularization $\mu$, rank $\ell$} 
		\Ensure{Approximate solution $\hat{x}$ to~\cref{eqn:reg-linsys-intro}}
\vspace{0.5pc}
		\State $[U,\hat{\Lambda}]$ = RandomizedNystr{\"o}mApproximation($A,\ell$)
		\State Use~\cref{eq-Woodbury} to compute $\hat{x} = (\hat{A}_{\textrm{nys}}+\mu I)^{-1}b$
	\end{algorithmic}
\end{algorithm}

\label{Section:NystromSketchSolve}
\subsection{Overview}

Given a rank-$\ell$ Nystr{\"o}m approximation $\hat A_\textrm{nys}$ of the psd matrix $A$, it is tempting to replace the regularized linear system $(A + \mu I) x = b$ with the proxy $(\hat{A}_{\textrm{nys}} + \mu I) x = b$.  Indeed, we can solve the proxy linear system in $O(\ell n)$ time using the Sherman--Morrison--Woodbury formula~\cite[Eqn. (2.1.4)]{golubvanloan2013}:

\begin{lemma}[Approximate regularized inversion]
\label{SketchInv}
Consider any rank-$\ell$ matrix $\hat{A}$ with eigenvalue decomposition $\hat{A} = U \hat \Lambda U^T$.
Then
\begin{equation}
\label{eq-Woodbury}
    (\hat{A}+\mu I)^{-1} = U(\hat{\Lambda}+\mu I)^{-1}U^{T}+\frac{1}{\mu}(I-UU^{T}).
\end{equation}
\end{lemma}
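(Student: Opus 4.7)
The plan is to prove the identity by exploiting the orthogonal decomposition induced by $U$. Since $\hat A = U \hat\Lambda U^T$ is an eigenvalue decomposition, $U \in \R^{n\times \ell}$ has orthonormal columns and $UU^T$ is the orthogonal projector onto $\mathrm{range}(\hat A)$. Let $U_\perp \in \R^{n\times (n-\ell)}$ be any matrix such that $[U,\;U_\perp]$ is orthogonal; then $U_\perp U_\perp^T = I - UU^T$.

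The first step is to rewrite the regularized matrix as
\begin{equation*}
\hat A + \mu I \;=\; U(\hat\Lambda + \mu I)U^T \,+\, \mu\, U_\perp U_\perp^T.
\end{equation*}
The two summands act on orthogonal subspaces, so this expression is block-diagonal in the orthogonal basis $[U,\;U_\perp]$. I can therefore invert each block separately to obtain
\begin{equation*}
(\hat A + \mu I)^{-1} \;=\; U(\hat\Lambda + \mu I)^{-1}U^T \,+\, \tfrac{1}{\mu}\,U_\perp U_\perp^T,
\end{equation*}
and substituting $U_\perp U_\perp^T = I - UU^T$ recovers the claimed formula.

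If a purely computational argument is preferred, one can verify the identity by direct multiplication: letting $M$ denote the right-hand side, expand $(\hat A + \mu I)M$ and use $U^T U = I_\ell$ together with $\hat A\,(I - UU^T) = 0$ to collapse the cross terms. The product telescopes to $U(\hat\Lambda + \mu I)(\hat\Lambda + \mu I)^{-1}U^T + (I - UU^T) = UU^T + (I - UU^T) = I$. There is no real obstacle here; the only subtlety is that the rank-$\ell$ hypothesis guarantees $\hat\Lambda$ is an $\ell\times\ell$ diagonal matrix with strictly positive entries, so the block inverse $(\hat\Lambda + \mu I)^{-1}$ is unambiguous. The argument is a short linear-algebra exercise, and its value lies in enabling fast linear solves with the preconditioner $P$ defined in~\cref{eqn:precond-intro}.
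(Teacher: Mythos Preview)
Your argument is correct. The paper does not actually prove this lemma; it simply attributes the identity to the Sherman--Morrison--Woodbury formula with a citation to Golub and Van Loan. Your block-diagonalization argument (and the alternative direct multiplication check) is a self-contained and arguably more transparent route in this special case: rather than invoking the general rank-$\ell$ update formula and simplifying, you exploit from the outset that $\hat A + \mu I$ is already diagonal in the orthogonal basis $[U,\,U_\perp]$, so the inverse is immediate.

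One small quibble: the rank-$\ell$ hypothesis alone only guarantees that the diagonal entries of $\hat\Lambda$ are nonzero, not strictly positive; positivity comes from the ambient assumption (used throughout the paper) that $\hat A$ is psd. Either way, what the formula actually needs is $\mu\neq 0$ and $\hat\Lambda+\mu I$ invertible, both of which hold in the paper's setting.
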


\noindent
We refer to the approach in this paragraph as the Nystr{\"o}m sketch-and-solve algorithm
because it is modeled on the sketch-and-solve paradigm that originated in~\cite{sarlos2006improved}.

See~\cref{alg:NysSketchSolve} for a summary of the Nystr{\"o}m sketch-and-solve method.
The algorithm produces an approximate solution $\hat x$ to the regularized linear system~\cref{eqn:reg-linsys-intro} in time $O(T_{\textrm{mv}}\ell+\ell^{2} n)$.  The arithmetic cost is much faster than a direct method, which costs $O(n^3)$.  It can also be faster than running CG for a long time at a cost of $O(T_{\textrm{mv}})$ per iteration.  The method looks attractive if we only consider the runtime, and yet...

Nystr{\"o}m sketch-and-solve only has one parameter, the rank $\ell$ of the Nystr{\"o}m approximation, which controls the quality of the approximate solution $\hat x$. When $\ell \ll n$, the method has an appealing computational profile.  As $\ell$ increases, the approximation quality increases but the computational burden becomes heavy.  We will show that, alas, an accurate solution to the linear system actually requires $\ell\approx n$, at which point the computational benefits of Nystr{\"o}m sketch-and-solve evaporate completely.

In summary, Nystr{\"o}m sketch-and-solve is almost never the right algorithm to use.  We will see that Nystr{\"o}m PCG generally produces much more accurate solutions with a similar computational cost.

\subsection{Guarantees and deficiencies}
\label{NysSketchSolveAnalysis}

Using \cref{GenInvErrBnd} together with the a priori guarantee in \cref{NysExpBound},  we quickly obtain a performance guarantee for \cref{alg:NysSketchSolve}. 

\begin{theorem}
\label{NysInvBnd}
Fix $p\geq 2$, and set $\ell = 2p - 1$.  For a psd matrix $A \in \mathbb{S}_n^+(\R)$, construct a randomized Nystr{\"o}m approximation $\hat{A}_{\textup{nys}}$ using \cref{alg:RandNysAppx}. 
Then the approximation error for the inverse satisfies
\begin{equation}
     \label{eq-NysInvExpErr}
     \mathbb{E}\big\|(A+\mu I)^{-1}-(\hat{A}_{\textup{nys}}+\mu I)^{-1}\big\|\leq \bigg(3+\frac{4\textup{e}^{2}}{p}\textup{sr}_{p}(A)\bigg)\frac{\lambda_{p}}{\mu \cdot (\lambda_{p}+\mu)}.
\end{equation}
Define $x_{\star} = (A+\mu I)^{-1}b$, and select $\ell = 2\,\lceil 1.5 \, d_{\textup{eff}}(\epsilon\mu)\rceil+1$.
Then the approximate solution $\hat{x}$ computed by \cref{alg:NysSketchSolve} satisfies
\begin{equation}
    \label{eq-NysSSRelErr}
    \E\bigg[\frac{\|\hat{x}-x_{\star}\|_{2}}{\|x_{\star}\|_{2}}\bigg] \leq 26\epsilon. 
\end{equation}
\end{theorem}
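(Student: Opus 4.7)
Both conclusions follow by combining Proposition \ref{GenInvErrBnd} with Corollary \ref{NysExpCorr}: the first (inverse) bound requires only the randomized approximation error estimate, and the second (solution) bound additionally converts the effective-dimension hypothesis into eigenvalue-tail estimates.

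For \cref{eq-NysInvExpErr}, I would first observe that $E := A - \hat{A}_{\textup{nys}}$ is psd by Lemma \ref{NysPropLemma}, so Proposition \ref{GenInvErrBnd} yields the pathwise inequality $\|(\hat{A}_{\textup{nys}}+\mu I)^{-1} - (A+\mu I)^{-1}\| \leq (1/\mu)\,\|E\|/(\|E\|+\mu)$. The map $t \mapsto t/(t+\mu)$ is concave and non-decreasing on $[0,\infty)$, so Jensen's inequality gives $\mathbb{E}[\|E\|/(\|E\|+\mu)] \leq \mathbb{E}\|E\|/(\mathbb{E}\|E\|+\mu)$. Corollary \ref{NysExpCorr} with $\ell = 2p-1$ bounds $\mathbb{E}\|E\| \leq c\lambda_p$ where $c := 3 + (4\textup{e}^2/p)\,\textup{sr}_p(A) \geq 1$; monotonicity of the fraction in the numerator and the inequality $c \geq 1$ in the denominator simplify the bound to the stated form $c\lambda_p/(\mu(\lambda_p+\mu))$.

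For \cref{eq-NysSSRelErr}, I would start from the algebraic identity $\hat{x} - x_\star = (\hat{A}_{\textup{nys}}+\mu I)^{-1}E\,x_\star$, derived by substituting $b = (A+\mu I)x_\star$ into $\hat{x} = (\hat{A}_{\textup{nys}}+\mu I)^{-1}b$. Since $\hat{A}_{\textup{nys}} \succeq 0$ gives $\|(\hat{A}_{\textup{nys}}+\mu I)^{-1}\| \leq 1/\mu$, the pathwise estimate $\|\hat{x}-x_\star\|_2/\|x_\star\|_2 \leq \|E\|/\mu$ follows, and taking expectations yields $\mathbb{E}[\|\hat{x}-x_\star\|_2/\|x_\star\|_2] \leq \mathbb{E}\|E\|/\mu$. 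With $\ell = 2\lceil 1.5\, d_{\textup{eff}}(\epsilon\mu)\rceil + 1$, writing $p = \lceil 1.5\, d_{\textup{eff}}(\epsilon\mu)\rceil + 1 \geq 1.5\, d_{\textup{eff}}(\epsilon\mu)$, the hypothesis produces two spectral estimates at level $\epsilon\mu$: from $d_{\textup{eff}}(\epsilon\mu) \geq p\,\lambda_p/(\lambda_p+\epsilon\mu)$ we deduce $\lambda_p \leq 2\epsilon\mu$, and for $j>p$ the chain $\lambda_j \leq \lambda_p \leq 2\epsilon\mu$ gives $\lambda_j/(\lambda_j+\epsilon\mu) \geq \lambda_j/(3\epsilon\mu)$, which sums to $\sum_{j>p}\lambda_j \leq 3\epsilon\mu\, d_{\textup{eff}}(\epsilon\mu)$. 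Plugging both into $\mathbb{E}\|E\|/\mu \leq [3\lambda_p + (4\textup{e}^2/p)\sum_{j>p}\lambda_j]/\mu$ and using $d_{\textup{eff}}(\epsilon\mu)/p \leq 2/3$ reduces each term to a universal constant multiple of $\epsilon$.

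The hard part is purely constants bookkeeping: to land at exactly the claimed $26\epsilon$ one must carry the two spectral estimates through tightly and, if necessary, bypass Corollary \ref{NysExpCorr} in favor of Proposition \ref{NysExpBound}, optimizing the auxiliary index $p$ in the minimization to shave the prefactor in the stable-rank term. Conceptually nothing new is required beyond the three ingredients above (Proposition \ref{GenInvErrBnd}, Corollary \ref{NysExpCorr}, and the effective-dimension bookkeeping), so once the constants are tracked the argument is routine.
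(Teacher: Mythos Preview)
Your proposal is correct and follows essentially the same argument as the paper: Proposition~\ref{GenInvErrBnd} with Jensen and Corollary~\ref{NysExpCorr} for~\eqref{eq-NysInvExpErr}, and the identity $\hat{x}-x_\star = (\hat{A}_{\textup{nys}}+\mu I)^{-1}E\,x_\star$ together with $\mathbb{E}\|E\|/\mu$ for~\eqref{eq-NysSSRelErr}. The one refinement the paper uses to land on the exact constant $26$ is Lemma~\ref{KeyLemma}, item~\ref{l-eff-avg}, which yields the sharper tail estimate $\sum_{j\geq p}\lambda_j \leq d_{\textup{eff}}(\epsilon\mu)\cdot\epsilon\mu$ (versus your $3\epsilon\mu\, d_{\textup{eff}}(\epsilon\mu)$); with that in hand Corollary~\ref{NysExpCorr} already suffices and there is no need to revert to Proposition~\ref{NysExpBound}.
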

The proof of \cref{NysInvBnd} may be found in \cref{section:NysSketchSolveProof}.

\cref{NysInvBnd} tells us how accurately we can hope to solve linear systems using Nystr{\"o}m sketch-and-solve (\cref{alg:NysSketchSolve}). 
To obtain relative error $\epsilon$, we should choose $\ell = O(d_{\textrm{eff}}(\epsilon\mu))$.  When $\epsilon \mu$ is small, we anticipate that $d_{\textrm{eff}}(\epsilon\mu)\approx n$.  In this case, Nystr{\"o}m sketch-and-solve has no computational value.  Our analysis is sharp in its essential respects, so the pessimistic assessment is irremediable.

\subsection{History}

Nystr{\"o}m sketch-and-solve has a long history in the machine learning literature.  It was introduced in \cite{williams2001using} to speed up kernel-based learning, and it plays a role in many subsequent papers on kernel methods.  In this context, the Nystr{\"o}m approximation is typically obtained using column sampling \cite{el2014fast,bach2013sharp,williams2001using}, which has its limitations (\cref{RandProjvsColSamp}).  More recently, Nystr{\"o}m sketch-and-solve has been applied to speed up approximate cross-validation \cite{stephenson2020LowRank}.

The analysis of Nystr{\"o}m sketch-and-solve presented above differs from previous analysis. Prior works~\cite{el2014fast,bach2013sharp} focus on the kernel setting, and they use properties of column sampling schemes to derive learning guarantees.  In contrast, we bound the relative error for a Nystr{\"om} approximation based on a random projection.
Our overall approach extends to column sampling if we replace \cref{NysExpBound} by an appropriate analog, such as Gittens's results~\cite{gittens2011spectral}.

\section{Nystr{\"o}m Preconditioned Conjugate Gradients}
\label{section:NystromPCG}
We now present our main algorithm, Nystr{\"o}m PCG.  This algorithm produces high accuracy solutions to a regularized linear system by using the Nystr{\"o}m approximation $\hat{A}_{\textrm{nys}}$ as a preconditioner.  We provide a rigorous estimate for the condition number of the preconditioned system, and we prove that Nystr{\"o}m PCG leads to fast convergence for regularized linear systems.  In contrast, we have shown that Nystr{\"o}m sketch-and-solve cannot be expected to yield accurate solutions.
  
\subsection{The preconditioner}

In this section, we introduce the optimal low-rank preconditioner,
and we argue that the randomized Nystr{\"o}m preconditioner provides an approximation that is easy to compute.

\subsubsection{Motivation}
As a warmup, suppose we knew the eigenvalue decomposition of the best rank-$\ell$ approximation of the matrix: $\lfloor A \rfloor_\ell = V_{\ell} \Lambda_{\ell} V_{\ell}^T$.
How should we use this information to construct a good preconditioner for the regularized linear system~\cref{eqn:reg-linsys-intro}?

Consider the family of symmetric psd matrices that act as the identity on the orthogonal complement of $\mathrm{range}(V_{\ell})$.  Within this class, we claim that the following matrix is the \textit{optimal preconditioner}:
\begin{equation}
\label{eq-OptLowRankPre} 
    P_\star = \frac{1}{\lambda_{\ell+1}+\mu}V_{\ell}(\Lambda_{\ell}+\mu I)V_{\ell}^{T}+(I-V_{\ell}V_{\ell}^{T}). 
\end{equation}
The optimal preconditioner $P_{\star}$ requires $O(n\ell)$ storage, and we can solve linear systems in $P_{\star}$ in $O(n\ell)$ time.  Whereas the regularized matrix $A_{\mu}$ has condition number $\kappa_2(A_{\mu}) = (\lambda_{1}+\mu)/(\lambda_{n}+\mu)$, the preconditioner yields
\begin{equation}
\label{eq-OptCondNumber}
\kappa_{2}(P_\star^{-1/2}A_{\mu}P^{-1/2}_{\star}) = \frac{\lambda_{\ell+1}+\mu}{\lambda_{n}+\mu}.
\end{equation}
This is the minimum possible condition number attainable by a preconditioner  from the class that we have delineated.  It represents a significant improvement when $\lambda_{\ell + 1} \ll \lambda_1$.
The proofs of these claims are straightforward; for details, see \cref{section:PrecondPfs}.

\subsubsection{Randomized Nystr{\"o}m preconditioner}

It is expensive to compute the best rank-$\ell$ approximation $\lfloor A \rfloor_\ell$ accurately.  In contrast, we can compute the rank-$\ell$ randomized Nystr{\"o}m approximation $\hat{A}_{\mathrm{nys}}$ efficiently (\cref{alg:RandNysAppx}).
Furthermore, we have seen that $\hat{A}_{\mathrm{nys}}$ approximates $A$ nearly as well as the optimal rank-$\ell$ approximation (\cref{NysExpCorr}).
These facts lead us to study the randomized Nystr{\"o}m preconditioner,
proposed in~\cite[Sec.~17]{MartTroppSurvey} without a complete justification.

Consider the eigenvalue decomposition $\hat{A}_{\mathrm{nys}} = U \hat{\Lambda} U^T$, and write $\hat{\lambda}_{\ell}$ for its $\ell$th eigenvalue.  The randomized Nystr{\"o}m preconditioner and its inverse take the form
\begin{equation}
\label{eq-NysPre}
\begin{aligned}
    P &= \frac{1}{\hat{\lambda}_{\ell}+\mu}U(\hat{\Lambda}+\mu I)U^{T}+(I-UU^{T}); \\
    P^{-1} &= (\hat{\lambda}_{\ell}+\mu)U(\hat{\Lambda}+\mu I)^{-1}U^{T}+(I-UU^{T}).
\end{aligned}
\end{equation}
Like the optimal preconditioner $P_\star$, the randomized Nystr{\"o}m preconditioner \eqref{eq-NysPre} is cheap to apply and to store.
We may hope that it damps the condition number of the preconditioned system $P^{-1/2}A_{\mu}P^{-1/2}$ nearly as well as the optimal preconditioner $P_\star$.  We will support this intuition with a rigorous bound (\cref{NysPCGThm}).

\subsection{Nystr{\"o}m PCG}

We can obviously use the randomized Nystr{\"o}m preconditioner within the framework of PCG.
We call this approach Nystr{\"o}m PCG, and we present a basic implementation in \cref{alg:NysPCG}.

More precisely, \cref{alg:NysPCG} uses left-preconditioned CG.  This algorithm implicitly works with the unsymmetric matrix $P^{-1}A_{\mu}$, rather than the symmetric matrix $P^{-1/2}{A_\mu}P^{-1/2}$.  The two methods yield identical sequences of iterates~\cite{saad2003iterative}, but the former is more efficient.  For ease of analysis, our theoretical results are presented in terms of the symmetrically preconditioned matrix.

\begin{algorithm}[t]
	\caption{Nystr{\"o}m PCG} 
	\label{alg:NysPCG}
	\begin{algorithmic}[1] 
	\Require{Psd matrix $A$, righthand side $b$, initial guess $x_{0}$, 
	regularization parameter $\mu$, sketch size $\ell$,
	solution tolerance $\eta$}
    \Ensure{Approximate solution $\hat{x}$ to regularized system~\cref{eqn:reg-linsys-intro}}
\vspace{0.5pc}
    \State $[U, \hat{\Lambda}] =$ RandomizedNystr{\"o}mApproximation$(A, \ell)$
	\State $r_{0} = b-(A+ \mu I) x_{0}$
	\State $z_{0} = P^{-1}r_{0}$ \Comment{using \eqref{eq-NysPre}}
	\State $p_{0} = z_{0}$
		\While {$\|r\|_{2}>\eta$}
		\State $v = (A+ \mu I) p_{0}$ 
		\State $\alpha = (r_0^{T}z_{0}) / (p_{0}^{T}v_{0})$ \Comment{compute step size} 
		\State $x = x_{0}+\alpha p_{0}$ \Comment{update solution}
		\State $r = r_{0}-\alpha v$ \Comment{update residual}
		\State $z = P^{-1}r$ \Comment{find search direction via \eqref{eq-NysPre}}
		\State $\beta = (r^{T}z)/(r_{0}^{T}z_{0})$
		\State $x_{0}\leftarrow x$, $r_{0} \leftarrow r$, $p_{0}\leftarrow z+\beta p_{0}$, $z_{0}\leftarrow z$ 
		\EndWhile
	\end{algorithmic}
\end{algorithm}

\subsubsection{Complexity of Nystr{\"o}m PCG}

Nystr{\"o}m PCG has two steps.  First, we construct the randomized Nystr{\"o}m approximation, and then we solve the regularized linear system using PCG.
We have already discussed the cost of constructing the Nystr{\"o}m approximation (\cref{section:NystromAppxCost}).
The PCG stage costs $O(T_{\max})$ operations per iteration, and it uses a total of $O(n)$ additional storage.

For the regularized linear system~\cref{eqn:reg-linsys-intro}, \cref{NysPCGEffDimThm,ConvergenceCorollary} demonstrate that it is enough to choose the sketch size $\ell = 2 \, \lceil 1.5 d_{\mathrm{eff}}(\mu) \rceil + 1$.  In this case, the overall runtime of Nystr{\"o}m PCG is
$$
O\big(d_{\mathrm{eff}}(\mu)^2 n +  T_{\mathrm{mv}} ( d_{\mathrm{eff}}(\mu) + \log(1/\epsilon) ) \big)
\quad\text{operations.}
$$
When the effective dimension $d_{\mathrm{eff}}(\mu)$ is modest, Nystr{\"o}m PCG is very efficient.

In contrast, \cref{NysSketchSolveAnalysis} shows that the running time for Nystr{\"o}m sketch-and-solve has the same form---with $d_{\mathrm{eff}}(\epsilon \mu)$ in place of $d_{\mathrm{eff}}(\mu)$.  This is a massive difference.  Nystr{\"o}m PCG can produce solutions whose residual norm is close to machine precision; this type of successful computation is impossible with Nystr{\"o}m sketch-and-solve.

\subsubsection{Comparison to other randomized preconditioning methods}
\label{sec:preconditioner-comparison}
In this subsection we give a more comprehensive discussion of how Nystr{\"o}m PCG compares to prior work on randomized preconditioning \cite{avron2010blendenpik,lacotte2020effective,meng2014lsrn,rokhlin2008fast} based on sketch-and-precondition and related ideas. All these prior methods were developed for least squares problems. We summarize the complexity of each method for regularized least-squares problems in \cref{t-ComplexityComparison}. 

\begin{table}[h]
    \begin{center}
\footnotesize
    \caption{\label{t-ComplexityComparison} \textbf{Regularized least-squares: complexity of prior randomized preconditoning methods vs. Nystr{\"o}m PCG}. The table compares the complexity of Nystr{\"o}m PCG and state-of-the-art randomized preconditioning methods in the overdetermined case $n\geq d$, 
    assuming we can access $A$ only via matrix-vector products. 
    The sketch-and-precondition preconditioner is constructed from a sketch $SA$,  where $S\in \mathbb{R}^{m\times n}$ is a $(1\pm \gamma)$ Gaussian subspace embedding with sketch size $\Omega(d/\gamma)$ and $\gamma \in (0,1).$ 
    The time to compute the sketch is $O(T_{\textrm{mv}}d / \gamma)$ and the iteration complexity follows from the argument in ~\cite[Sec ~2.6]{woodruff2014sketching}. 
    For AdaIHS we use a sketch constructed from a Gaussian subspace embedding with sketch size $O(d_{\textrm{eff}}(\mu)/\rho)$ where $\rho \in (0,0.18)$. The complexity of AdaIHS follows from ~\cite[Theorem 5]{lacotte2020effective}. 
    The complexity of Nystr{\"o}m PCG is derived from \cref{NysPCGEffDimThm,ConvergenceCorollary}.}
    \begin{tabular}{|c|c|c|}
    \hline
    \textbf{Method} & \thead{Complexity} & \thead{References}\\
    \hline
    Sketch-and-precondition & $O\left(T_\textrm{mv}d/\gamma + d^{3}/\gamma+T_{\textrm{mv}}\frac{\log\left(2/\epsilon\right)}{\log\left(1/\gamma\right)}\right)$ & \cite{avron2010blendenpik,meng2014lsrn,rokhlin2008fast} \\
    \hline
    AdaIHS & $\makecell{O\left((T_{\textrm{mv}}d_{\textrm{eff}}/\rho+dd_{\textrm{eff}}^{2}/\rho^{2})\log(d_{\textrm{eff}}/\rho)+T_{\textrm{mv}}\frac{\log(1/\epsilon)}{\log(1/{\rho})}\right)}$ & \cite{lacotte2020effective}\\
    \hline
    Nystr{\"o}m PCG & $O\left(T_\textrm{mv}d_{\textrm{eff}}+dd_{\textrm{eff}}^{2}+T_{\textrm{mv}}\log(2/\epsilon)\right)$ & This work  \\
    \hline
    \end{tabular}
\end{center}
\end{table}
The time to construct the sketch-and-precondition preconditioner is always larger than the Nystr{\"o}m preconditioner,
since $d_\text{eff} < d$ and $\gamma < 1$.
Indeed, constructing the preconditioner for sketch-and-precondition costs $\Omega(d^{3})$, 
which is the same as a direct method when $d = \Omega(n)$
and is prohibitive for high-dimensional problems. 
Hence Nystr{\"o}m PCG is amenable to problems with large $d$ and runs much faster than sketch-and-precondition whenever $d_{\textrm{eff}}(\mu)\ll d$.
The Nystr{\"o}m preconditioner also enjoys wider applicability then sketch-precondition: it applies to square-ish systems, whereas the others only work for strongly overdetermined or underdetermined problems.
In addition to sketch-and-precondition, Nystr{\"o}m PCG also enjoys better complexity than AdaIHS. AdaIHS has linear dependence in $d$, but possesses additional logarithmic factors and scales in terms of $d_{\textrm{eff}}(\mu)/\rho$ where $\rho<0.18$, leading to a worse runtime.  


In the context of kernel ridge regression (KRR), 
the random features method of \cite{avron2017faster}
may be viewed as a randomized preconditioning technique.
\cite{avron2017faster} prove convergence guarantees for the polynomial kernel with a potentially prohibitive sketch size $\ell = O(d_{\textrm{eff}}(\mu)^{2})$. 
In contrast, Nystr{\"o}m PCG can be used for KRR with any kernel and requires smaller sketch size $\ell = O(d_{\textrm{eff}}(\mu))$ to obtain fast convergence. 

In summary, Nystr{\"o}m PCG applies to a wider class of problems than prior randomized preconditioners and enjoys stronger theoretical guarantees for regularized problems. 
Nystr{\"o}m PCG also outperforms other randomized preconditioners numerically (\cref{section:NumericalExperiments}). 

\subsubsection{Block Nystr{\"o}m PCG}

We can also use the Nystr{\"o}m preconditioner with the block CG algorithm~\cite{o1980block} to solve regularized linear systems with multiple right-hand sides.  For this approach, we also use an orthogonalization pre-processing proposed in~\cite{feng1995block} that ensures numerical stability without further orthogonalization steps during the iteration.

\subsection{Analysis of Nystr{\"o}m PCG}

We now turn to the analysis of the randomized Nystr{\"o}m preconditioner $P$.
\cref{NysPCGEffDimThm} provides a bound for the rank $\ell$ of the Nystr{\"o}m preconditioner that reduces the condition number of $A_{\mu}$ to a constant.  In this case, we deduce that Nystr{\"o}m PCG converges rapidly (\cref{ConvergenceCorollary}).

\begin{theorem}[Nystr{\"o}m preconditioning]
\label{NysPCGEffDimThm}
Suppose we construct the Nystr{\"o}m preconditioner $P$ in \cref{eq-NysPre} using \cref{alg:RandNysAppx} with sketch size $\ell = 2\,\lceil 1.5\,d_{\textup{eff}}(\mu)\rceil+1$.
Using $P$ to precondition the regularized matrix $A_{\mu}$ results in the condition number bound
\[
\E\big[\kappa_{2}(P^{-1/2}A_{\mu}P^{-1/2}) \big] < 28.
\]
\end{theorem}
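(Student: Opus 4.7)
The plan is to show $\lambda_{\min}(M) \geq \mu$ deterministically and $\E\lambda_{\max}(M) < 28\,\mu$, where $M := P^{-1/2}A_\mu P^{-1/2}$, so that $\E\kappa_2(M) < 28$ follows immediately. First I will split $M = M_0 + M_1$ with $M_0 := P^{-1/2}(\hat{A}_{\textup{nys}}+\mu I)P^{-1/2}$ and $M_1 := P^{-1/2}EP^{-1/2}$, where $E := A - \hat{A}_{\textup{nys}} \succeq 0$ by \cref{NysPropLemma}\cref{it:nys-order}. Working in the eigenbasis $\hat{A}_{\textup{nys}} = U\hat{\Lambda}U^T$, a direct computation yields $M_0 = (\hat{\lambda}_\ell + \mu)UU^T + \mu(I-UU^T)$, with $\lambda_{\min}(M_0) = \mu$ and $\lambda_{\max}(M_0) = \hat{\lambda}_\ell + \mu$. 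Since every eigenvalue of $P$ is at least $1$ we have $P \succeq I$, hence $\|M_1\| \leq \|E\|$. Weyl's inequality then gives $\lambda_{\min}(M) \geq \mu$ and $\lambda_{\max}(M) \leq \hat{\lambda}_\ell + \mu + \|E\|$, so
\[\E\kappa_2(M) \leq 1 + \E\hat{\lambda}_\ell/\mu + \E\|E\|/\mu.\]

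Next I will bound both quantities in terms of $d := d_{\textup{eff}}(\mu)$. From \cref{NysPropLemma}\cref{it:nys-eigs}, $\hat{\lambda}_\ell \leq \lambda_\ell$ deterministically; since $\ell \geq 3d + 1$ and $d \geq \ell\, \lambda_\ell/(\lambda_\ell + \mu)$ by the definition of $d$ and monotonicity of $x \mapsto x/(x+\mu)$, we obtain $\lambda_\ell/(\lambda_\ell + \mu) < 1/3$ and thus $\lambda_\ell < \mu/2$. To bound $\E\|E\|$, I will apply \cref{NysExpCorr} with $p = (\ell + 1)/2 = \lceil 1.5\,d \rceil + 1$, giving $\E\|E\| \leq 3\lambda_p + (4\textup{e}^2/p)\sum_{j \geq p}\lambda_j$. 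The same ratio argument forces $\lambda_p/(\lambda_p + \mu) \leq d/p < 2/3$, so $\lambda_p < 2\mu$ and $3\lambda_p < 6\mu$.

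The hardest step, and the crux of the whole argument, is obtaining a sharp bound on the tail sum $\sum_{j\geq p}\lambda_j$: the naive estimate $(\lambda_p + \mu)\,d \leq 3\mu d$ inflates the final constant to roughly $68$ and fails. The trick I will use is to set $r := \lambda_p/(\lambda_p + \mu)$ and exploit that $\sum_{j < p}\lambda_j/(\lambda_j + \mu) \geq (p-1)r$ by monotonicity, so the tail contributes at most $d - (p-1)r$ to the effective-dimension sum. Multiplying by $\lambda_p + \mu = \mu/(1-r)$ yields $\sum_{j\geq p}\lambda_j \leq \mu\cdot g(r)$ with $g(r) = (d - (p-1)r)/(1-r)$. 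Since $p - 1 \geq 1.5\,d > d$, the derivative $g'(r) = (d - (p-1))/(1-r)^2$ is negative, so $g$ is decreasing on $[0, d/p]$ and $g(r) \leq g(0) = d$. Consequently $\sum_{j\geq p}\lambda_j \leq \mu d$.

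Assembling the pieces, the tail contribution is $(4\textup{e}^2/p)\sum_{j\geq p}\lambda_j \leq (4\textup{e}^2 d/p)\mu < (8\textup{e}^2/3)\mu$ using $d/p < 2/3$, so $\E\|E\|/\mu < 6 + 8\textup{e}^2/3$. Combining with $\E\hat{\lambda}_\ell/\mu < 1/2$ yields
\[\E\kappa_2(M) < 1 + \tfrac{1}{2} + 6 + \tfrac{8\textup{e}^2}{3} = \frac{45 + 16\textup{e}^2}{6} < 28,\]
completing the bound.
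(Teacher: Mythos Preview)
Your proof is correct and follows essentially the same route as the paper: both obtain the deterministic bound $\kappa_2 \leq (\hat\lambda_\ell + \mu + \|E\|)/\mu$ via the splitting $A_\mu = (\hat A_{\mathrm{nys}}+\mu I)+E$ and Weyl's inequality, then invoke \cref{NysExpCorr} to control $\E\|E\|$ and bound $\lambda_p$ and the tail sum $\sum_{j\geq p}\lambda_j$ through the effective dimension. The paper packages the deterministic condition-number bound and the effective-dimension facts as separate results (\cref{NysPCGThm} and \cref{KeyLemma}) and cites them, whereas you re-derive them inline; in particular your $g(r)$ computation is exactly the calculation behind \cref{KeyLemma}, \cref{l-eff-avg}, and your slightly sharper estimate $\hat\lambda_\ell<\mu/2$ (exploiting $\ell>3d$ rather than $\ell>2d$) yields a marginally smaller final constant.
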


\noindent
The proof of \cref{NysPCGEffDimThm} may be found in \cref{Section:NysPCGEffDimThmProof}.

\cref{NysPCGEffDimThm} has several appealing features. 
Many other authors have noticed that the effective dimension controls 
sample size requirements for particular applications 
such as discriminant analysis \cite{chowdhury2018randomized},
ridge regression \cite{lacotte2020effective}, and kernel ridge regression \cite{el2014fast,bach2013sharp}. 
In contrast, our result holds for any regularized linear system.

Our argument makes the role of the effective dimension conceptually simpler,
and it leads to explicit, practical parameter recommendations.
Indeed, the effective dimension $d_{\mathrm{eff}}(\mu)$ is essentially
the same as the sketch size $\ell$ that makes the approximation error
$\|A - \hat{A}_{\mathrm{nys}}\|$ proportional to $\mu$. 
In previous arguments, such as those in \cite{el2014fast,bach2013sharp,chowdhury2018randomized},
the effective dimension arises because the authors reduce the analysis to approximate matrix multiplication~\cite{cohen2015optimal}, which produces inscrutable constant factors.

\Cref{NysPCGEffDimThm} ensures that Nystr{\"o}m PCG converges quickly.
\begin{corollary}[Nystr{\"o}m PCG: Convergence]
\label{ConvergenceCorollary}
Define $P$ as in \cref{NysPCGEffDimThm}, and condition on the event 
$\big\{\kappa_2\left(P^{-1/2}A_\mu P^{-1/2}\right)\leq 56\big\}$.
Let $M = P^{-1/2}A_{\mu}P^{-1/2}$.  If we initialize \cref{alg:NysPCG}
with initial iterate $x_0 = 0$, then the relative error $\delta_t$ in the iterate $x_t$ satisfies
\[
\delta_t = \frac{\|x_{t}-x_{\star}\|_M}{\|x_{\star}\|_{M}}
    < 2\cdot \left(0.77 \right)^{t}
    \quad\text{where $A_{\mu} x_{\star} = b$.}
\] 
In particular, after $t = \lceil{3.8 \log(2/\epsilon)\rceil}$ iterations,
we have relative error $\delta_t<\epsilon$.
\end{corollary}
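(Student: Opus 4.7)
The plan is to reduce the claim to the classical convergence bound for the standard (unpreconditioned) conjugate gradient method and then do a short numerical calculation. Left-preconditioned CG as written in \cref{alg:NysPCG} is mathematically equivalent to running CG on the symmetrized system $M \tilde{x} = P^{-1/2} b$ under the change of variable $\tilde{x} = P^{1/2} x$, with $M = P^{-1/2} A_\mu P^{-1/2}$. Since $M$ is symmetric positive definite, the textbook CG error bound (see, e.g., Saad, \emph{Iterative Methods for Sparse Linear Systems}) applies and yields, for every starting iterate,
\[
\|x_t - x_\star\|_M \leq 2 \left(\frac{\sqrt{\kappa}-1}{\sqrt{\kappa}+1}\right)^t \|x_0 - x_\star\|_M,
\qquad \kappa = \kappa_2(M).
\]
(This holds in whichever energy norm the preconditioned CG iteration is analyzed; the derivation is the standard Chebyshev polynomial argument on the Krylov subspace $\mathcal{K}_t$ generated by $M$.)

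Next I would invoke the conditioning hypothesis $\kappa_2(M) \leq 56$ inherited from \cref{NysPCGEffDimThm} together with the choice $x_0 = 0$, which makes $\|x_0 - x_\star\|_M = \|x_\star\|_M$. Consequently,
\[
\delta_t = \frac{\|x_t - x_\star\|_M}{\|x_\star\|_M} \leq 2 \left(\frac{\sqrt{56}-1}{\sqrt{56}+1}\right)^t.
\]
A direct numerical check gives $\sqrt{56} < 7.484$, so $(\sqrt{56}-1)/(\sqrt{56}+1) < 6.484/8.484 < 0.77$, which yields the stated bound $\delta_t < 2\,(0.77)^t$.

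Finally, to obtain the iteration count, I would solve $2\,(0.77)^t \leq \epsilon$ for $t$, giving $t \geq \log(2/\epsilon)/\log(1/0.77)$. Since $\log(1/0.77) > 1/3.8$ in natural logarithm (a one-line computation), the choice $t = \lceil 3.8 \log(2/\epsilon) \rceil$ is enough. There is essentially no obstacle here beyond bookkeeping of constants: the heavy lifting was already done in \cref{NysPCGEffDimThm}, which supplied the constant condition number, so the corollary is a direct specialization of the well-known geometric convergence rate of CG.
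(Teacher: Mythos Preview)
Your proposal is correct and follows essentially the same route as the paper: invoke the standard CG convergence bound $\delta_t \leq 2\big((\sqrt{\kappa}-1)/(\sqrt{\kappa}+1)\big)^t$, plug in $\kappa \leq 56$ from the conditioning event, verify $(\sqrt{56}-1)/(\sqrt{56}+1) < 0.77$, and solve for $t$. The paper cites Trefethen--Bau rather than Saad and omits your explicit discussion of the equivalence between left-preconditioned CG and CG on the symmetrized system, but the argument is otherwise identical.
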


The proof of \cref{ConvergenceCorollary} is an immediate consequence of the standard convergence result for CG \cite[Theorem 38.5, p.~299]{trefethen1997numerical}. See~\cref{section:ConvergenceCorollaryPf}. 

\subsubsection{Analyzing the condition number}

The first step in the proof of \cref{NysPCGEffDimThm} is a deterministic
bound on how the preconditioner \eqref{eq-NysPre} reduces the condition number
of the regularized matrix $A_{\mu}$.  Let us emphasize that this bound is valid
for any rank-$\ell$ Nystr{\"o}m approximation, regardless of the choice of test matrix.  

\begin{proposition}[Nystr{\"o}m preconditioner: deterministic bound]
\label{NysPCGThm}
Let $\hat{A} = U \hat{\Lambda} U^T$ be any rank-$\ell$ Nystr{\"o}m approximation, with $\ell$th largest eigenvalue $\hat{\lambda}_{\ell}$,
and let $E = A-\hat{A}$ be the approximation error.  Construct the Nystr{\"o}m preconditioner $P$ as in \eqref{eq-NysPre}.
Then the condition number of the preconditioned matrix $P^{-1/2}A_{\mu}P^{-1/2}$ satisfies
\begin{equation}
    \label{eq-CondNumBnd}
\begin{aligned}
     \max\bigg\{\frac{\hat{\lambda}_{\ell}+\mu}{\lambda_{n}+\mu},1\bigg\}
     &\leq\kappa_{2}(P^{-1/2}A_{\mu}P^{-1/2}) \\
     &\leq \left(\hat{\lambda}_{\ell}+\mu+\|E\|\right)\min\bigg\{\frac{1}{\mu},~
     \frac{\hat{\lambda}_{\ell} + \lambda_n + 2 \mu}{(\hat{\lambda}_\ell + \mu)(\lambda_n + \mu)}\bigg\}.
\end{aligned}
\end{equation}
\end{proposition}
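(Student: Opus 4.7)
The plan is to write $P^{-1/2}A_\mu P^{-1/2} = D + W$, where $D := P^{-1/2}(\hat{A} + \mu I) P^{-1/2}$ and $W := P^{-1/2} E P^{-1/2}$, then analyze the extreme eigenvalues of each summand separately. A short direct calculation using $\hat{A} + \mu I = U(\hat{\Lambda} + \mu I)U^T + \mu(I - UU^T)$ collapses $D$ to the two-eigenvalue form $(\hat{\lambda}_\ell + \mu)UU^T + \mu(I - UU^T)$, so its spectrum consists only of $\hat{\lambda}_\ell + \mu$ and $\mu$. Since $E = A - \hat{A} \succeq 0$ by \cref{NysPropLemma}, the summand $W$ is psd as well. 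A second ingredient I will reuse is $P \succeq I$, which falls out of the factored form of $P$ (the discrepancy $P - I$ equals $U(\hat{\Lambda} - \hat{\lambda}_\ell I)U^T/(\hat{\lambda}_\ell + \mu) \succeq 0$); in particular, $\|P^{-1}\| = 1$.

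For the upper bound on $\kappa_2(M)$, where $M = P^{-1/2} A_\mu P^{-1/2}$, I bound $\lambda_{\max}(M)$ and $\lambda_{\min}(M)$ separately. Weyl's inequality combined with $\|W\| \leq \|P^{-1}\|\,\|E\| = \|E\|$ gives $\lambda_{\max}(M) \leq \lambda_{\max}(D) + \|W\| \leq \hat{\lambda}_\ell + \mu + \|E\|$. For $\lambda_{\min}(M)$ there are two complementary lower bounds. The easy one is $W \succeq 0 \Longrightarrow \lambda_{\min}(D + W) \geq \lambda_{\min}(D) = \mu$, which produces the $1/\mu$ entry in the $\min$. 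The second, producing the other entry, comes from passing to the inverse: by cyclic similarity, $\|M^{-1}\| = \|A_\mu^{-1/2} P A_\mu^{-1/2}\|$. I split $P$ into its two defining psd pieces and apply the triangle inequality. The first piece, $(\hat{\lambda}_\ell + \mu)^{-1} U(\hat{\Lambda} + \mu I)U^T$, is dominated by $A_\mu$ since $U(\hat{\Lambda} + \mu I)U^T \preceq \hat{A} + \mu I \preceq A_\mu$ (using $\hat A \preceq A$ from \cref{NysPropLemma}), so its conjugate contributes at most $1/(\hat{\lambda}_\ell + \mu)$. The second piece $(I - UU^T)$, after conjugation by $A_\mu^{-1/2}$, has norm at most $\|A_\mu^{-1}\| \leq 1/(\lambda_n + \mu)$. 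Summing the two contributions gives $\|M^{-1}\| \leq \frac{1}{\hat{\lambda}_\ell + \mu} + \frac{1}{\lambda_n + \mu} = \frac{\hat{\lambda}_\ell + \lambda_n + 2\mu}{(\hat{\lambda}_\ell + \mu)(\lambda_n + \mu)}$. Multiplying by the upper bound on $\lambda_{\max}(M)$ and taking the tighter of the two lower bounds on $\lambda_{\min}(M)$ yields the stated upper bound on $\kappa_2$.

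The lower bound on $\kappa_2$ is quick. Weyl gives $\lambda_{\max}(M) \geq \lambda_{\max}(D) = \hat{\lambda}_\ell + \mu$. To produce the matching $\lambda_{\min}(M) \leq \lambda_n + \mu$, I plug in the unit test vector $x = P^{1/2} v_n/\|P^{1/2} v_n\|$, where $v_n$ is an eigenvector of $A$ at eigenvalue $\lambda_n$. A direct one-line computation gives $x^T M x = (\lambda_n + \mu)/(v_n^T P v_n) \leq \lambda_n + \mu$, where the last inequality uses $P \succeq I$ one more time. Combining these estimates yields $\kappa_2(M) \geq (\hat{\lambda}_\ell + \mu)/(\lambda_n + \mu)$; the trivial bound $\kappa_2 \geq 1$ handles the other argument of the $\max$ on the left of~\cref{eq-CondNumBnd}.

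The only nontrivial obstacle is the second lower bound on $\lambda_{\min}(M)$: one must pick the right two-piece splitting of $P$ so that each piece is dominated by a clean matrix ($A_\mu$ or the identity) and then recognize the resulting sum of reciprocals as the compact ratio appearing in the statement. All the other steps are routine applications of Weyl's inequality, cyclic similarity, and a single test-vector evaluation.
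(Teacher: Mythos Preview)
Your proposal is correct and follows essentially the same approach as the paper: both use the decomposition $P^{-1/2}A_\mu P^{-1/2}=P^{-1/2}(\hat A+\mu I)P^{-1/2}+P^{-1/2}EP^{-1/2}$, compute the spectrum of the first summand explicitly, bound $\lambda_{\max}$ via Weyl plus $\|P^{-1}\|=1$, obtain the $1/\mu$ bound from $W\succeq0$, and obtain the second lower bound on $\lambda_{\min}$ by passing to $A_\mu^{-1/2}PA_\mu^{-1/2}$ and splitting $P$ into the same two pieces. The only cosmetic difference is in the lower bound on $\kappa_2$: the paper writes $\lambda_n(P^{-1/2}A_\mu P^{-1/2})=\lambda_n(A_\mu P^{-1})\le\lambda_n(A_\mu)\lambda_1(P^{-1})$, whereas you evaluate the Rayleigh quotient at the test vector $P^{1/2}v_n/\|P^{1/2}v_n\|$ using $P\succeq I$; these are the same argument in different clothing.
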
 
For the proof of \cref{NysPCGThm} see \cref{section:NysPCGThmProof}.

To interpret
the result, recall the expression~\cref{eq-OptCondNumber} for the condition
number induced by the optimal preconditioner.  \cref{NysPCGThm} shows that
the Nystr{\"o}m preconditioner may reduce the condition number
almost as well as the optimal preconditioner.

In particular, when $\| E \| = O(\mu)$, 
the condition number of the preconditioned system is bounded
by a constant, independent of the spectrum of $A$.  In this case,
Nystr{\"o}m PCG is guaranteed to converge quickly.

\subsubsection{The effective dimension and sketch size selection}

How should we choose the sketch size $\ell$ to guarantee that $\|E\| = O( \mu )$? 
\cref{NysExpCorr} shows how the error in the rank-$\ell$ randomized Nystr{\"o}m approximation
depends on the spectrum of $A$ through the eigenvalues of $A$ and the tail stable rank.
In this section, we present a lemma which demonstrates that the effective dimension
$d_{\mathrm{eff}}(\mu)$ controls both quantities.  As a consequence of this bound,
we will be able to choose the sketch size $\ell$ proportional
to the effective dimension $d_{\mathrm{eff}}(\mu)$.

Recall from \cref{EffDimDef} that the effective dimension of the matrix $A$
is defined as
\begin{equation} \label{eqn:deff-proof}
d_{\textrm{eff}}(\mu) = \tr(A(A+\mu I)^{-1}) = \sum_{j=1}^{n}\frac{\lambda_j(A)}{\lambda_j(A)+\mu}.
\end{equation}
As previously mentioned, $d_{\textrm{eff}}(\mu)$ may be viewed as a smoothed count of the eigenvalues
larger than $\mu$. Thus, one may expect that $\lambda_{k}(A) \lesssim \mu$
for $k \gtrsim d_{\textrm{eff}}(\mu)$. This intuition is correct, and it forms
the content of \cref{KeyLemma}.

\begin{lemma}[Effective dimension]
\label{KeyLemma}
Let $A\in \mathbb{S}_{n}^+(\mathbb{R})$ with eigenvalues $\lambda_{1}\geq\lambda_{2}\geq\cdots\geq\lambda_{n}$. Let $\mu > 0$ be regularization parameter,
and define the effective dimension as in~\cref{eqn:deff-proof}.  The following statements hold.
\begin{enumerate}
    \item \label{l-eff-max}
    Fix $\gamma>0$. If $j\geq (1+\gamma^{-1})d_{\textup{eff}}(\mu)$, then $\lambda_{j}\leq \gamma\mu$.
    \item \label{l-eff-avg} If $k\geq d_{\textup{eff}}(\mu)$, then
    $k^{-1} \sum_{j>k}\lambda_j \leq (d_{\textup{eff}}(\mu)/k) \cdot \mu$. 
\end{enumerate}
\end{lemma}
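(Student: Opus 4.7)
The plan is to prove the two parts separately: part~\cref{l-eff-max} by a monotonicity-plus-contrapositive argument, and part~\cref{l-eff-avg} by a factoring trick that exploits eigenvalue ordering in two places simultaneously.

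For part~\cref{l-eff-max} I would argue by contrapositive. Suppose $\lambda_j > \gamma\mu$; by $\lambda_1\geq\cdots\geq\lambda_n$ this forces $\lambda_i > \gamma\mu$ for every $i\leq j$, and since $x\mapsto x/(x+\mu)$ is strictly increasing on $[0,\infty)$, each term satisfies $\lambda_i/(\lambda_i+\mu) > \gamma/(\gamma+1)$. Summing just the first $j$ terms in the series \cref{eqn:deff-proof} yields $d_{\textup{eff}}(\mu) > j\gamma/(\gamma+1)$, equivalently $j < (1+\gamma^{-1})d_{\textup{eff}}(\mu)$, contradicting the hypothesis. Taking the contrapositive delivers the claim.

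For part~\cref{l-eff-avg}, the key move is the identity $\lambda_j = (\lambda_j+\mu)\cdot\lambda_j/(\lambda_j+\mu)$, with each factor controlled by eigenvalue monotonicity. Since $\lambda_j+\mu\leq \lambda_k+\mu$ for $j>k$,
\[
\sum_{j>k}\lambda_j \;\leq\; (\lambda_k+\mu)\sum_{j>k}\frac{\lambda_j}{\lambda_j+\mu}.
\]
I would then rewrite the right-hand sum as $d_{\textup{eff}}(\mu) - \sum_{j\leq k}\lambda_j/(\lambda_j+\mu)$, and note that the increasing map $x\mapsto x/(x+\mu)$ combined with $\lambda_j\geq \lambda_k$ for $j\leq k$ gives $\sum_{j\leq k}\lambda_j/(\lambda_j+\mu) \geq k\lambda_k/(\lambda_k+\mu)$. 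Substituting and simplifying collapses the bound to $\sum_{j>k}\lambda_j \leq \mu\, d_{\textup{eff}}(\mu) + \lambda_k(d_{\textup{eff}}(\mu)-k)$. Under the hypothesis $k\geq d_{\textup{eff}}(\mu)$ the second term is nonpositive, so the tail is at most $\mu\, d_{\textup{eff}}(\mu)$; dividing by $k$ yields the stated bound. The edge case $\lambda_k=0$ is trivial since then $\lambda_j=0$ for every $j>k$.

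The subtle point is that one needs to invoke eigenvalue monotonicity in \emph{both} factors of the splitting and then relate the two via the definition of $d_{\textup{eff}}$; any looser bookkeeping (for instance, discarding the head contribution $k\lambda_k/(\lambda_k+\mu)$) will not suffice because the bound is sharp. Sharpness is visible for the flat spectrum $\lambda_1=\cdots=\lambda_N=c$, $\lambda_j=0$ otherwise, for which equality holds at $k = d_{\textup{eff}}(\mu) = Nc/(c+\mu)$, confirming that the argument has no slack to give away.
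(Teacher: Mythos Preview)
Your proof is correct and follows essentially the same approach as the paper. Part~\cref{l-eff-max} is the paper's argument phrased as a contrapositive rather than via the maximal index $j_\star=\max\{j:\lambda_j>\gamma\mu\}$, and part~\cref{l-eff-avg} reproduces the paper's chain of inequalities line for line, including the split $d_{\textup{eff}}(\mu)-\sum_{j\le k}\lambda_j/(\lambda_j+\mu)$ and the lower bound $k\lambda_k/(\lambda_k+\mu)$ on the head sum.
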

The proof of \cref{KeyLemma} may be found in \cref{section:KeyLemmaProof}.

 \cref{KeyLemma}, \cref{l-eff-max} captures the intuitive fact that there are no more than $2 d_{\mathrm{eff}}(\mu)$ eigenvalues larger than $\mu$.  Similarly,
 \cref{l-eff-avg} states that the effective dimension controls the sum
 of all the eigenvalues whose index exceeds the effective dimension.
 It is instructive to think about the meaning of these results
 when $d_{\textrm{eff}}(\mu)$ is small.

\subsubsection{Proof of \cref{NysPCGEffDimThm}}
\label{Section:NysPCGEffDimThmProof}
 We are now prepared to prove \cref{NysPCGEffDimThm}.
 The key ingredients in the proof are \cref{NysExpBound}, \cref{NysPCGThm}, and \cref{KeyLemma}.
 
\begin{proof}[Proof of \cref{NysPCGEffDimThm}]
Fix the sketch size $\ell = 2 \, \lceil 1.5 \, d_{\mathrm{eff}}(\mu) \rceil + 1$.
Construct the rank-$\ell$ randomized Nystr{\"o}m approximation $\hat{A}_{\mathrm{nys}}$
with eigenvalues $\hat{\lambda}_j$.  Write $E = A - \hat{A}_{\mathrm{nys}}$ for the approximation error.
Form the preconditioner $P$ via~\cref{eq-NysPre}.  We must bound the expected condition number
of the preconditioned matrix $P^{-1/2} A_{\mu} P^{-1/2}$
 
First, we apply \cref{NysPCGThm} to obtain a deterministic bound that is valid for any rank-$\ell$
Nystr{\"o}m preconditioner:
\[
\kappa_{2}(P^{-1/2}A_{\mu}P^{-1/2})\leq \frac{\hat{\lambda}_{\ell}+\mu+\|E\|}{\mu}
    \leq 2 + \frac{\|E\|}{\mu}.
\]
The second inequality holds because $\hat{\lambda}_{\ell} \leq \lambda_{\ell} \leq \mu$.
This is a consequence of \cref{NysPropLemma}, \cref{it:nys-eigs}
and \cref{KeyLemma}, \cref{l-eff-max} with $\gamma = 1$.
We rely on the fact that $\ell \geq 2 \, d_{\mathrm{eff}}(\mu)$.

Decompose $\ell = 2p-1$ where $p = \lceil1.5\,d_{\textrm{eff}}(\mu)\rceil+1$.
Take the expectation, and invoke \cref{NysExpCorr} to obtain 
\[\E\big[\kappa_{2}(P^{-1/2}A_{\mu}P^{-1/2})\big]
    \leq 2 + \left(3+\frac{4\textrm{e}^{2}}{p}\textrm{sr}_{p}(A)\right) (\lambda_{p} / \mu).\]
By definition, $\textrm{sr}_{p}(A) \cdot \lambda_{p} = \sum_{j \geq p}\lambda_{j}$.
To complete the bound, apply \cref{KeyLemma} twice.  We use \cref{l-eff-max} with $\gamma = 2$ and \cref{l-eff-avg} with $k = p-1 = \lceil 1.5\,d_{\textrm{eff}}(\mu)\rceil$ to reach
\[
\E\big[\kappa_{2}(P^{-1/2}A_{\mu}P^{-1/2})\big]\leq 2+\frac{3\cdot 2\mu+4\textrm{e}^{2}\cdot 2\mu/3}{\mu}<2+26 = 28,
\]
which is the desired result.
\end{proof}

\subsection{Practical parameter selection}
\label{sec:adaptive-rank}

In practice, we may not know the regularization parameter $\mu$ in advance,
and we rarely know the effective dimension $d_{\mathrm{eff}}(\mu)$.  As a consequence,
we cannot enact the theoretical recommendation for the rank of the Nystr{\"o}m
preconditioner: $\ell = 2 \, \lceil 1.5\,d_{\mathrm{eff}}(\mu) \rceil + 1$.
Instead, we need an adaptive method for choosing the rank $\ell$.  Below,
we outline three strategies.

\subsubsection{Strategy 1: Adaptive rank selection by a posteriori error estimation}
\label{section:ErrorRankDoubling}

The first strategy uses the posterior condition number estimate adaptively
in a procedure the repeatedly doubles the sketch size $\ell$ as required.
 Recall that
 \cref{NysPCGThm} controls the condition number
 of the preconditioned system:
\begin{equation}
\label{eq-kappa-reg-bnds}
\kappa_{2}(P^{-1/2}A_{\mu}P^{-1/2}) \leq \frac{\hat{\lambda}_{\ell}+\mu+\|E\|}{\mu}
\quad\text{where $E = A - \hat{A}_{\mathrm{nys}}$.}
\end{equation} 
We get $\hat{\lambda}_{\ell}$ for free from \cref{alg:RandNysAppx} and we can compute the error $\|E\|$ inexpensively with the randomized power method \cite{kuczynski1992estimating};
see \cref{alg:RandPowErrEst} in \cref{section:AdaRankSelection}. 
Thus, we can ensure the condition number is small by making $\|E\|$ and $\hat{\lambda}_{\ell}$ fall below some desired tolerance.
The adaptive strategy proceeds to do this as follows.
We compute a randomized Nystr{\"o}m approximation with initial sketch size $\ell_{0}$, and we estimate the error $\| E \|$ using randomized powering. 
If $\|E\|$ is smaller than a prescribed tolerance, we accept the rank-$\ell_0$ approximation. If the tolerance is not met, then we double the sketch size, update the approximation, and estimate $\|E\|$ again. 
The process repeats until the estimate for $\|E\|$ falls below the tolerance or it breaches a threshold $\ell_{\textrm{max}}$ for the maximum sketch size.  \cref{alg:AdaRandNysAppx} usings the following stopping criterions $\| E \| \leq \tau\mu$ and $\hat{\lambda}_{\ell}\leq \tau\mu/11$ for a modest constant $\tau$. The stopping criterion on $\hat{\lambda}_{\ell}$ does not seem to be necessary in practice, as it is usually an order of magnitude small than $\|E\|$, but it is needed for \cref{AdaNysRankThm}.   
Based on numerical experience, we recommend a choice $\tau\in [1,100]$.  
For full algorithmic details of adaptive rank selection by estimating $\|E\|$,
 see \ref{alg:AdaRandNysAppx} in \ref{section:AdaRankSelection}. 
 
 The following theorem shows that with high probability, \cref{alg:AdaRandNysAppx} terminates with a modest sketch size in at most a logarithmic number of steps, and PCG with the resulting preconditioner converges rapidly.
 
 \begin{theorem}
 \label{AdaNysRankThm}
Run \cref{alg:AdaRandNysAppx} with initial sketch size $\ell_0$ and tolerance $\tau \mu$ where $\tau \geq 1 $, and let $\tilde{\ell} = 2\lceil 2d_{\textup{eff}}\left(\frac{\delta\tau\mu}{11}\right)\rceil+1$. Then with probability at least $1-\delta$:
\begin{itemize}
    \item[1.] \cref{alg:AdaRandNysAppx} doubles the sketch size at most
    $\lceil\log_{2}\left(\frac{\tilde{\ell}}{\ell_{0}}\right)\rceil$ times.
    \item[2.] The final sketch size $\ell$ satisfies
    \[\ell \leq 4\lceil2d_{\textup{eff}}\left(\frac{\delta\tau\mu}{11}\right)\rceil+2.\]
    \item[3.] With the preconditioner constructed from \cref{alg:AdaRandNysAppx}, Nystr{\"o}m PCG converges in at most $\lceil\frac{\log(2/\epsilon)}{\log(1/\tau_{0})}\rceil$ iterations, where $\tau_{0} = \frac{\sqrt{1+12\tau/11}-1}{\sqrt{1+12\tau/11}+1}$. 
\end{itemize}
\end{theorem}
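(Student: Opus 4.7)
The plan is to pivot on a single threshold sketch size: I will show that once \cref{alg:AdaRandNysAppx} reaches sketch size $\tilde\ell$, both stopping criteria hold with probability at least $1-\delta$. All three conclusions then follow easily.

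\textbf{Setup.} Let $\mu^* = \delta\tau\mu/11$, and write $\tilde\ell = 2p-1$ with $p = \lceil 2 d_{\textup{eff}}(\mu^*)\rceil + 1$. The core claim is that the rank-$\tilde\ell$ randomized Nystr\"om approximation $\hat{A}_{\textup{nys}}$ satisfies the stopping criteria $\|E\| \leq \tau\mu$ and $\hat\lambda_{\tilde\ell} \leq \tau\mu/11$ with probability at least $1-\delta$.

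\textbf{Step 1 (deterministic control of $\hat\lambda_{\tilde\ell}$).} Since $\tilde\ell \geq 2 d_{\textup{eff}}(\mu^*) = (1+1/\gamma)d_{\textup{eff}}(\mu^*)$ with $\gamma=1$, \cref{KeyLemma}(\ref{l-eff-max}) gives $\lambda_{\tilde\ell} \leq \mu^*$. Combining this with \cref{NysPropLemma}(\ref{it:nys-eigs}) yields $\hat\lambda_{\tilde\ell} \leq \lambda_{\tilde\ell} \leq \mu^* = \delta\tau\mu/11 \leq \tau\mu/11$. The eigenvalue stopping criterion therefore holds deterministically; no probabilistic reasoning is needed.

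\textbf{Step 2 (probabilistic control of $\|E\|$).} Apply \cref{NysExpCorr} with the chosen $p$ to obtain $\mathbb{E}\|E\| \leq (3 + (4\textup{e}^2/p)\,\textup{sr}_p(A))\lambda_p$. Then use \cref{KeyLemma}(\ref{l-eff-max}) with $\gamma=1$ to bound $\lambda_p \leq \mu^*$, and \cref{KeyLemma}(\ref{l-eff-avg}) with $k=p-1 \geq d_{\textup{eff}}(\mu^*)$ to bound $\sum_{j\geq p}\lambda_j \leq (p-1)\mu^*/2$. The two bounds combine to give $\mathbb{E}\|E\| \leq c\,\mu^*$ for an absolute constant $c$; the numerical factor $11$ in the definition of $\mu^*$ is chosen exactly so that $c/11 \leq 1$. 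Markov's inequality then yields
\[
\mathbb{P}\{\|E\| > \tau\mu\} \;\leq\; \frac{\mathbb{E}\|E\|}{\tau\mu} \;\leq\; \frac{c\mu^*}{\tau\mu} \;=\; \frac{c\,\delta}{11} \;\leq\; \delta.
\]

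\textbf{Step 3 (Parts 1 and 2).} On the good event of Step 2 (probability $\geq 1-\delta$), as soon as the algorithm's sketch size reaches $\tilde\ell$ both criteria are met and the algorithm terminates. Since the sketch size is doubled starting from $\ell_0$, this requires at most $\lceil \log_2(\tilde\ell/\ell_0)\rceil$ doublings, giving Part 1. Because the sketch size just before the final doubling was strictly less than $\tilde\ell$, the final sketch size is at most $2\tilde\ell = 4\lceil 2d_{\textup{eff}}(\delta\tau\mu/11)\rceil + 2$, which is Part 2.

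\textbf{Step 4 (Part 3).} At termination, $\|E\| \leq \tau\mu$ and $\hat\lambda_\ell \leq \tau\mu/11$ by the stopping criteria. Substituting these into the deterministic upper bound of \cref{NysPCGThm} gives
\[
\kappa_2(P^{-1/2}A_\mu P^{-1/2}) \;\leq\; \frac{\hat\lambda_\ell + \mu + \|E\|}{\mu} \;\leq\; 1 + \tau + \frac{\tau}{11} \;=\; 1 + \frac{12\tau}{11}.
\]
Applying the standard CG error bound $\|x_t - x_\star\|_M/\|x_\star\|_M \leq 2\tau_0^{t}$ with $\tau_0 = (\sqrt{\kappa_2}-1)/(\sqrt{\kappa_2}+1)$ and substituting the above value of $\kappa_2$ yields the stated expression for $\tau_0$. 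Solving $2\tau_0^t \leq \epsilon$ produces the iteration count $t \geq \lceil \log(2/\epsilon)/\log(1/\tau_0)\rceil$.

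\textbf{Main obstacle.} The delicate point is Step 2: the constants from \cref{NysExpCorr} and the two parts of \cref{KeyLemma} must be tracked carefully so that the resulting $c$ in $\mathbb{E}\|E\| \leq c\mu^*$ is compatible with the factor $11$ appearing in the definition of $\mu^*$. A secondary subtlety is that \cref{alg:AdaRandNysAppx} does not access $\|E\|$ directly but rather a randomized power-method estimate; the failure probability of that estimator must be folded into the budget $\delta$, for example by allocating $\delta/2$ to the Nystr\"om sketch and $\delta/2$ to power-method accuracy via a union bound, and by calibrating the stopping threshold so that a successful estimator upper-bounds the true $\|E\|$ by at worst $\tau\mu$.
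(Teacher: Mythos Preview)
Your proposal is correct and takes essentially the same approach as the paper: both pivot on the threshold sketch size $\tilde\ell$, establish the eigenvalue stopping criterion deterministically via \cref{NysPropLemma} and \cref{KeyLemma}, bound $\mathbb{E}\|E\|$ using \cref{NysExpCorr} together with \cref{KeyLemma} and apply Markov for the error criterion, and then read off Parts 1--3 from \cref{NysPCGThm} and the standard CG bound. Your caveat about the randomized power-method estimator is a genuine subtlety that the paper's own proof does not address (it treats $\|E\|$ as computed exactly).
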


\cref{AdaNysRankThm} immediately implies the following concrete guarantee.
\begin{corollary}
Set $\tau = 44$ and $\delta = 1/4$ in \cref{alg:AdaRandNysAppx} then with probability at least $3/4$:
\item[1.] \cref{alg:AdaRandNysAppx} doubles the sketch size at most
    $\lceil\log_{2}\left(\frac{\tilde{\ell}}{\ell_{0}}\right)\rceil$ times.
    \item[2.] The final sketch size $\ell$ satisfies
    \[\ell \leq 4\lceil2d_{\textup{eff}}(\mu)\rceil+2.\]
    \item[3.] With the preconditioner constructed from \cref{alg:AdaRandNysAppx}, Nystr{\"o}m PCG converges in at most $\lceil3.48\log(2/\epsilon)\rceil$ iterations. 
\end{corollary}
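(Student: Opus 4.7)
The plan is to derive the corollary by direct substitution of $\tau = 44$ and $\delta = 1/4$ into the three conclusions of \cref{AdaNysRankThm}, verifying that each parameter simplifies cleanly.

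First I would handle the probability bookkeeping: the theorem's high-probability event occurs with probability at least $1-\delta = 3/4$, which matches the stated bound. Then I would turn to the effective-dimension argument in items 1 and 2. With the chosen constants, $\delta\tau/11 = (1/4)(44)/11 = 1$, so $d_{\textup{eff}}(\delta\tau\mu/11) = d_{\textup{eff}}(\mu)$. Consequently $\tilde{\ell} = 2\lceil 2 d_{\textup{eff}}(\mu)\rceil + 1$, and the bounds in items 1 and 2 of \cref{AdaNysRankThm} become exactly the statements in items 1 and 2 of the corollary.

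Next I would check the iteration-count claim. Computing $12\tau/11 = 48$ gives $\sqrt{1 + 12\tau/11} = \sqrt{49} = 7$, hence
\[
\tau_{0} \;=\; \frac{\sqrt{1+12\tau/11}-1}{\sqrt{1+12\tau/11}+1} \;=\; \frac{7-1}{7+1} \;=\; \frac{3}{4}.
\]
Therefore $\log(1/\tau_{0}) = \log(4/3)$. A short numerical check shows $1/\log(4/3) < 3.4762 < 3.48$, so
\[
\Bigl\lceil \tfrac{\log(2/\epsilon)}{\log(1/\tau_{0})} \Bigr\rceil \;=\; \Bigl\lceil \tfrac{\log(2/\epsilon)}{\log(4/3)} \Bigr\rceil \;\leq\; \lceil 3.48\,\log(2/\epsilon)\rceil,
\]
which is item 3 of the corollary.

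There is no genuine obstacle here beyond bookkeeping: the only nontrivial step is confirming the inequality $1/\log(4/3) \leq 3.48$, which holds because $\log(4/3) \geq 0.2876 > 1/3.48$. The corollary therefore follows immediately from \cref{AdaNysRankThm} with the stated parameter choices.
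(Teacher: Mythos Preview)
Your proposal is correct and matches the paper's treatment: the paper states that the corollary ``immediately implies'' from \cref{AdaNysRankThm} and gives no separate proof, so your direct substitution of $\tau=44$, $\delta=1/4$ (yielding $\delta\tau/11=1$, $\sqrt{1+12\tau/11}=7$, $\tau_0=3/4$) is exactly the intended verification.
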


\subsubsection{Strategy 2: Adaptive rank selection by monitoring $\hat{\lambda}_{\ell}/\mu$}
\label{section:RatioRankDoubling}

The second strategy is almost identical to the first, except we monitor the ratio $\hat{\lambda}_{\ell}/\mu$ instead of $\|E\|/\mu$. 
Strategy 2 doubles the approximation rank until $\hat{\lambda}_{\ell}/\mu$ falls below some tolerance (say, 10) 
or the sample size reaches the threshold $\ell_{\max}$.
The approach is justified by the following proposition which shows that once the rank $\ell$ is sufficiently large, with high probability, the exact condition number differs from the empirical condition number $(\hat{\lambda}_\ell+\mu)/\mu$ by at most a constant.
\begin{proposition}
\label{AdaRankRatProp}
Let $\tau\geq 0$ denote the tolerance and $\delta>0$ a given failure probability. Suppose the rank of the randomized Nystr{\"o}m approximation satisfies $\ell \geq 2\lceil 2d_{\textup{eff}}\left(\tau\mu)\right)\rceil+1$. Then 
\begin{equation}
    \mathbb{P}\left\{\left(\kappa_{2}(P^{-1/2}A_{\mu}P^{-1/2})-\frac{\hat{\lambda}_{\ell}+\mu}{\mu}\right)_{+}\leq\frac{\tau}{\delta}\right\}\geq 1-\delta,
\end{equation}
where $X_{+} = \max\{X,0\}.$
\end{proposition}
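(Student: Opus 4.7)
\textbf{Proof plan for Proposition \ref{AdaRankRatProp}.} The plan is to reduce the claim to a tail bound on the approximation error $\|E\| = \|A - \hat{A}_{\mathrm{nys}}\|$, which is then controlled by combining the expectation bound of \cref{NysExpCorr} with Markov's inequality and the effective-dimension estimates of \cref{KeyLemma}.

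First, I would invoke the deterministic upper bound from \cref{NysPCGThm}, which gives
\[
\kappa_{2}(P^{-1/2}A_{\mu}P^{-1/2}) \;\leq\; \frac{\hat{\lambda}_{\ell}+\mu+\|E\|}{\mu}
\;=\; \frac{\hat{\lambda}_{\ell}+\mu}{\mu} + \frac{\|E\|}{\mu}.
\]
This immediately yields $\bigl(\kappa_{2}(P^{-1/2}A_{\mu}P^{-1/2}) - (\hat{\lambda}_{\ell}+\mu)/\mu\bigr)_{+} \leq \|E\|/\mu$ pointwise. So it suffices to show $\mathbb{P}\{\|E\|/\mu \leq \tau/\delta\} \geq 1 - \delta$ (up to a universal constant that is absorbed into the way $\tau$ is interpreted).

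Second, I would bound $\mathbb{E}\|E\|$ using \cref{NysExpCorr}. Write $\ell = 2p-1$, so that the hypothesis $\ell \geq 2\lceil 2d_{\mathrm{eff}}(\tau\mu)\rceil + 1$ translates to $p - 1 \geq 2\,d_{\mathrm{eff}}(\tau\mu)$. Then \cref{NysExpCorr} gives
\[
\mathbb{E}\|E\| \;\leq\; 3\lambda_{p} + \frac{4\mathrm{e}^{2}}{p}\sum_{j\geq p}\lambda_{j}.
\]
Now apply \cref{KeyLemma} with regularization parameter $\tau\mu$ in place of $\mu$: part \ref{l-eff-max} with $\gamma = 1$ shows $\lambda_{p} \leq \tau\mu$, while part \ref{l-eff-avg} with $k = p-1$ shows $\sum_{j \geq p}\lambda_{j} \leq d_{\mathrm{eff}}(\tau\mu)\cdot\tau\mu \leq \tfrac{1}{2}(p-1)\tau\mu$. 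Combining, $\mathbb{E}\|E\| \leq (3 + 2\mathrm{e}^{2})\,\tau\mu$, which is $\leq C\tau\mu$ for an absolute constant $C$.

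Third, Markov's inequality applied to the nonnegative random variable $\|E\|/\mu$ gives
\[
\mathbb{P}\!\left\{\frac{\|E\|}{\mu} > \frac{C\tau}{\delta}\right\}
\;\leq\; \frac{\delta \cdot \mathbb{E}\|E\|}{C\tau\mu} \;\leq\; \delta.
\]
Combining this tail bound with the deterministic reduction from the first step yields the claim, with $\tau/\delta$ understood up to the absolute constant $C$ (which is absorbed into the choice of $\tau$, analogously to \cref{NysPCGEffDimThm} where a similar constant is absorbed into the numeric factor 28). The main subtlety is simply arranging the parameters in \cref{KeyLemma} correctly so that the bound on $\mathbb{E}\|E\|$ scales linearly in $\tau\mu$; there is no deep obstacle, as every ingredient has already been developed in the earlier parts of the paper.
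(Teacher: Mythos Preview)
Your approach is essentially identical to the paper's: both reduce to $\|E\|/\mu$ via the deterministic bound from \cref{NysPCGThm}, then control $\mathbb{E}\|E\|$ using \cref{NysExpBound}/\cref{NysExpCorr} together with \cref{KeyLemma} at regularization level $\tau\mu$, and finish with Markov's inequality. The paper's proof simply asserts that ``our choice of $\ell$ combined with \cref{NysExpBound} and \cref{KeyLemma} implies $\mathbb{E}[\|E\|]\leq \tau\mu$'' and plugs this directly into Markov, whereas you track the constant and obtain $\mathbb{E}\|E\|\leq (3+2\mathrm{e}^{2})\tau\mu$; your more careful accounting is arguably an improvement, though your closing remark about ``absorbing'' the constant into $\tau$ is informal in the same way the paper's assertion is.
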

This strategy has the benefit of saving a bit of computation and is preferable when the target precision is not too important, eg, in machine learning problems where training error only loosely predicts test error.

\subsubsection{Strategy 3: Choose $\ell$ as large as the user can afford}
\label{section:LargeLStrategy}

The third strategy is to choose the rank $\ell$ as large as the user can afford.
This approach is coarse, and it does not yield any guarantees on the
cost of the Nystr{\"o}m PCG method.

Nevertheless, once we have constructed a rank-$\ell$ Nystr{\"o}m approximation we can combine the posterior estimate of the condition number used in strategy 1 with the standard convergence theory of PCG to obtain an upper bound for the iteration count of Nystr{\"o}m PCG.
This gives us advance warning
about how long it may take to solve the regularized linear system.
As in strategy 1 we compute the error $\| E \|$ in the condition number bound inexpensively with the randomized power method.

\section{Applications and experiments}
\label{section:NumericalExperiments}
In this section, we study the performance of Nystr{\"o}m PCG on real world data from three different applications: ridge regression, kernel ridge regression, and approximate cross-validation. 
The experiments demonstrate the effectiveness of the preconditioner and our strategies for choosing the rank $\ell$ compared to other algorithms in the literature:
on large datasets, we find that our method outperforms competitors by a factor of 5--10 (\Cref{t-rfregress_results} and \Cref{t-largekrr-results}).

\subsection{Preliminaries}
\label{section:ExperimentalPrelims}
We implemented all experiments in MATLAB R2019a and MATLAB R2021a on a server with 128 Intel Xeon E7-4850 v4 2.10GHz CPU cores and 1056 GB. Except for the very large scale datasets ($n\geq 10^{5})$, every numerical experiment in this section was repeated twenty times;
tables report the mean over the twenty runs, and the standard deviation (in parentheses) when it is non-zero. We highlight the best-performing method in a table in bold.     

We select hyperparameters of competing methods by grid search to optimize performance.
This procedure tends to be very charitable to the competitors, and it may not be representative of their real-world performance.
Indeed, grid search is computationally expensive, and it cannot be used as part of a practical implementation.
A detailed overview of the experimental setup for each application may be found in the appropriate section of \cref{section:Experimentaldetails}, and additional numerical results in \cref{section:AddNumerics}.

\subsection{Ridge regression}
\label{section:RidgeRegression}
In this section, we solve the ridge regression problem \eqref{eq-RidgeRegression} described in \cref{IntroRidgeEx} on some standard machine learning data sets (\cref{t-ridge-datasets}) from OpenML \cite{vanschoren2014openml} and LIBSVM \cite{chang2011libsvm}. 
We compare Nystr{\"o}m PCG to standard CG and two state-of-the-art randomized preconditioning methods, 
the sketch-and-precondition method of Rokhlin and Tygert~(R\&T)~\cite{rokhlin2008fast} and the Adaptive Iterative Hessian Sketch (AdaIHS)~\cite{lacotte2020effective}.

\subsubsection{Experimental overview}

  \begin{table}[t]
  \caption{\label{t-ridge-datasets} \textbf{Ridge regression datasets.}}
\footnotesize
\begin{center}
    \begin{tabular}{|c|c|c|}
    \hline
    \textbf{Dataset} & $\mathbf{n}$ & $\mathbf{d}$ \\
    \hline
    CIFAR-10 & 50,000 & 3,072 \\
    \hline
    Guillermo & 20,000 & 4,297 \\
    \hline 
    smallNorb-rf & 24,300 & 10,000 \\
    \hline
    shuttle-rf  & 43,300 & 10,000 \\
    \hline
    Higgs-rf & 800,000 & 10,000 \\
    \hline
    YearMSD-rf & 463,715 & 15,000\\
    \hline
    \end{tabular}
\end{center}
\end{table}

We perform two sets of experiments: computing regularization paths on CIFAR-10 and Guillermo, 
and random features regression \cite{rahimi2007random,rahimi2008uniform} 
on shuttle, smallNORB, Higgs and YearMSD with specified values of $\mu$. The values of $\mu$ may be found in \cref{section:RidgeRegressionDetails}.
We use Euclidean norm $\|r\|_{2}$ of the residual as our stopping criteria, with convergence declared when $\|r\|_2\leq 10^{-10}$. 
For both sets of experiments, we use Nystr{\"o}m PCG with adaptive rank selection, \cref{alg:AdaRandNysAppx} in \cref{section:AdaRankSelection}. For complete details of the experimental methodology, see \cref{section:RidgeRegressionDetails}. 

The regularization path experiments solve \cref{eq-RidgeRegression} over a regularization path $\mu = 10^{j}$ where $j = 3,\cdots,-6$. 
We begin by solving the problem for the largest $\mu$ (initialized at zero), and solve for progressively smaller $\mu$ with warm starting. For each value of $\mu$, every method is allowed at most 500 iterations to reach the desired tolerance. 

\subsubsection{Computing the regularization path}
\label{sec:RegPath}
\cref{fig:EffDimPlots} shows the evolution of $d_{\textrm{eff}}(\mu)$ along the regularization path. CIFAR-10 and Guillermo are both small, so we compute the exact effective dimension as a point of reference. 
We see that we reach the sketch size cap of $\ell_{\textrm{max}} = 0.5d$ for CIFAR-10 and $\ell_{\textrm{max}} = 0.4d$ for Guillermo when $\mu$ is small enough. For CIFAR-10, 
Nystr{\"o}m PCG chooses a rank much smaller than the effective dimension for small values of $\mu$. Nevertheless, the method still performs well (\cref{fig:RidgeRegPathPlots}). 

\begin{figure}[t]
   \centering
     \begin{subfigure}[b]{0.48\textwidth}
         \centering
         \includegraphics[width=\textwidth]{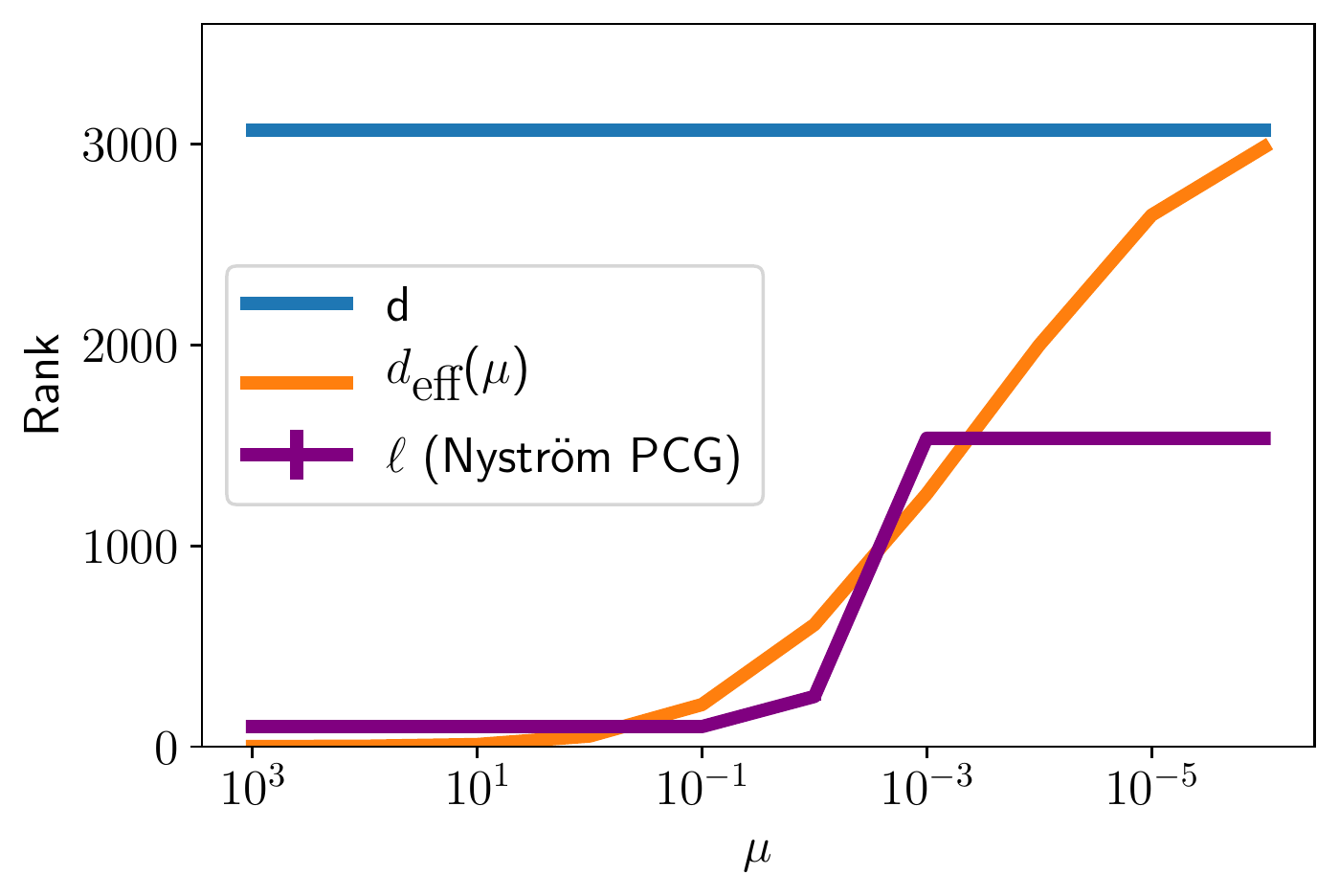}
         \caption{CIFAR 10}
         \label{fig:CifarEffDim}
     \end{subfigure}
     \begin{subfigure}[b]{0.48\textwidth}
         \centering
         \includegraphics[width=\textwidth]{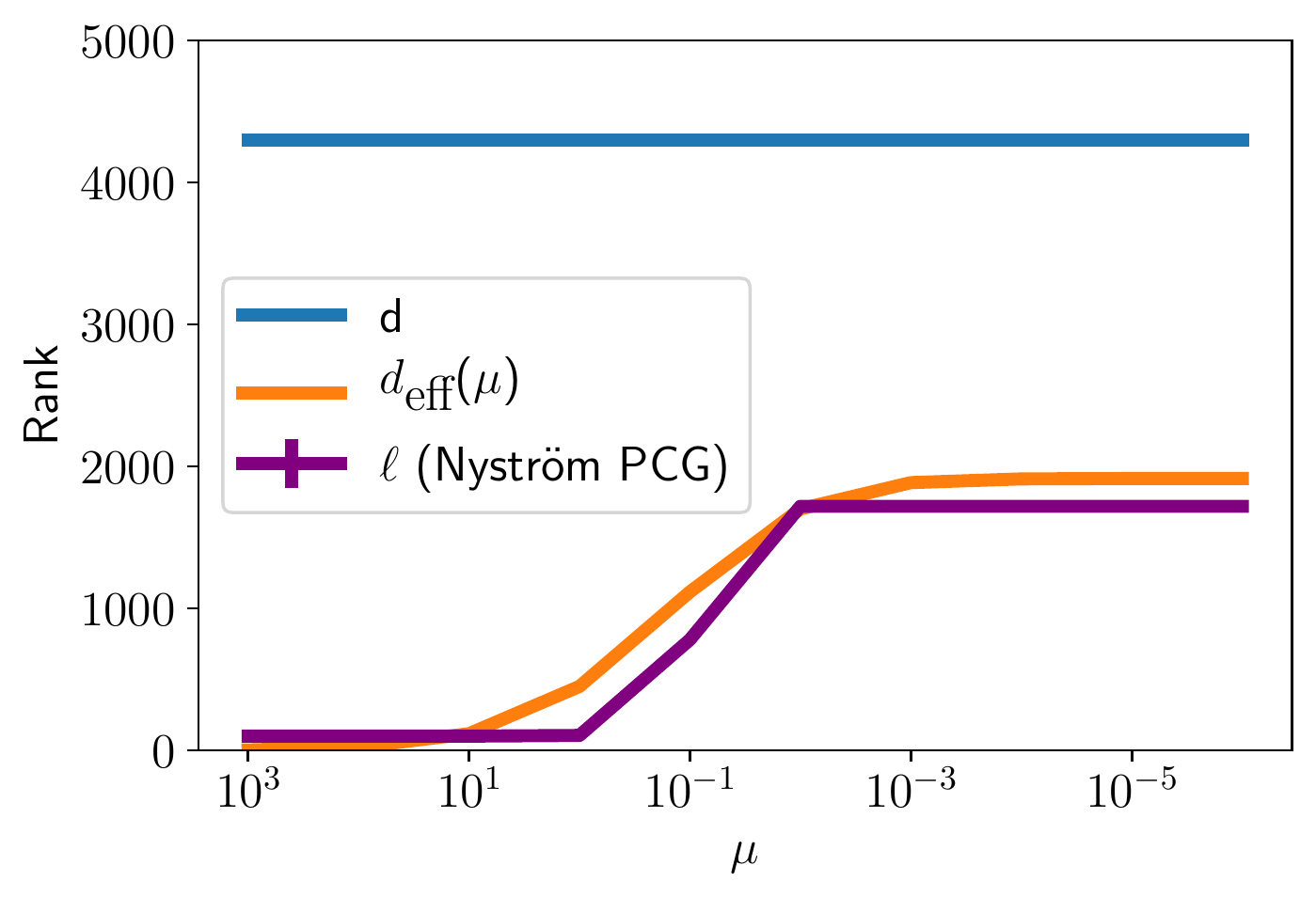}
         \caption{Guillermo}
         \label{fig:GuilEffDim}
     \end{subfigure}
\caption{\label{fig:EffDimPlots} \textbf{Ridge regression: Adaptive sketch size selection.}
Nystr{\"o}m PCG with adaptive rank selection (\cref{alg:AdaRandNysAppx}) selects a preconditioner whose rank is less than or equal to the effective dimension.We report error bars for the rank selected by the adaptive algorithm, however the variation is so small that the error bars aren't visible. Hence despite being inherently random, the adaptive algorithm's behavior is practically deterministic across runs. See \cref{sec:RegPath}.} 
\end{figure}

\begin{figure}[t]
   \centering
     \begin{subfigure}[b]{0.48\textwidth}
         \centering
         \includegraphics[width=\textwidth]{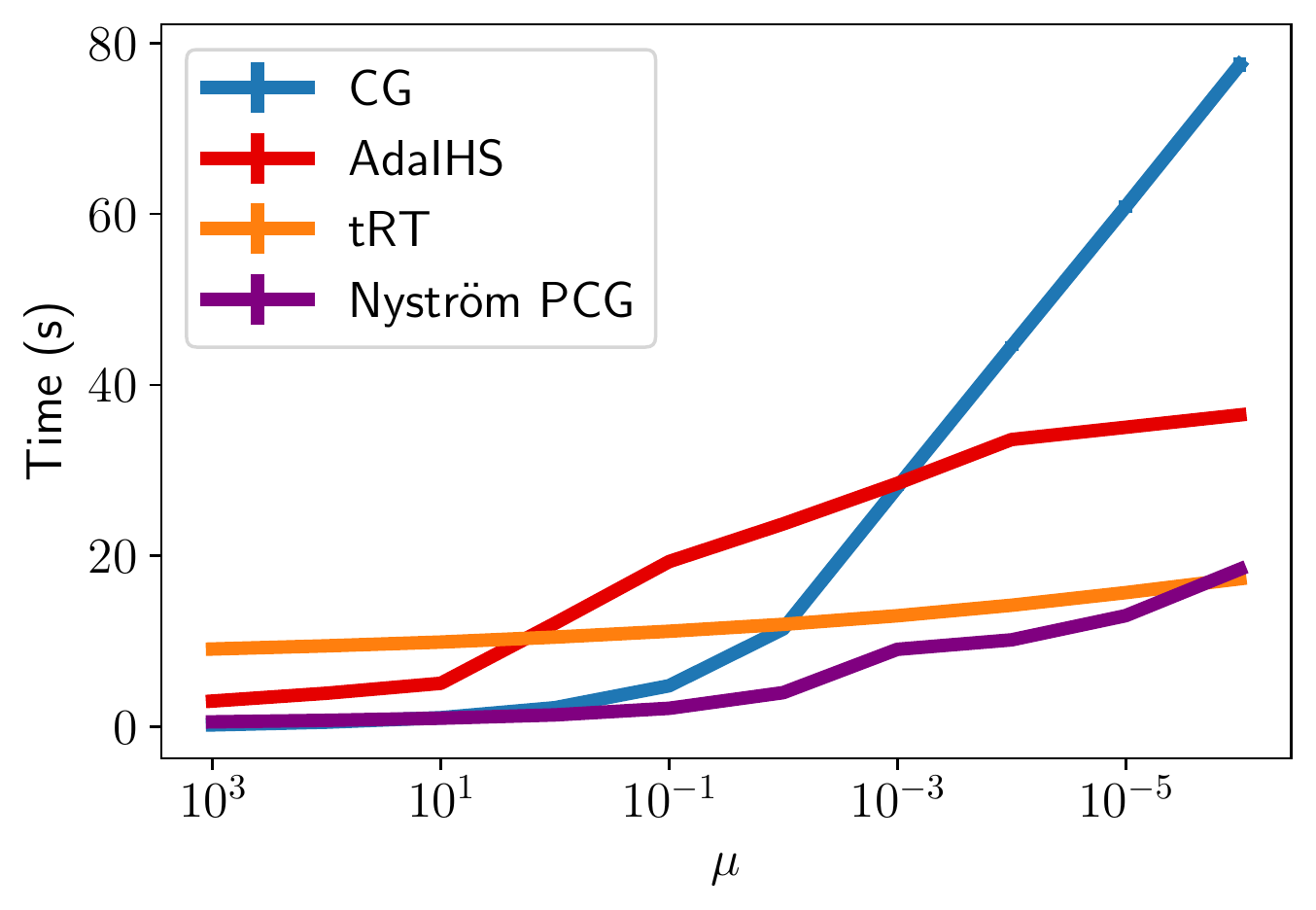}
         \caption{CIFAR 10: Runtime versus $\mu$}
         \label{fig:CifarCumTime}
     \end{subfigure}
     \begin{subfigure}[b]{0.48\textwidth}
         \centering
         \includegraphics[width=\textwidth]{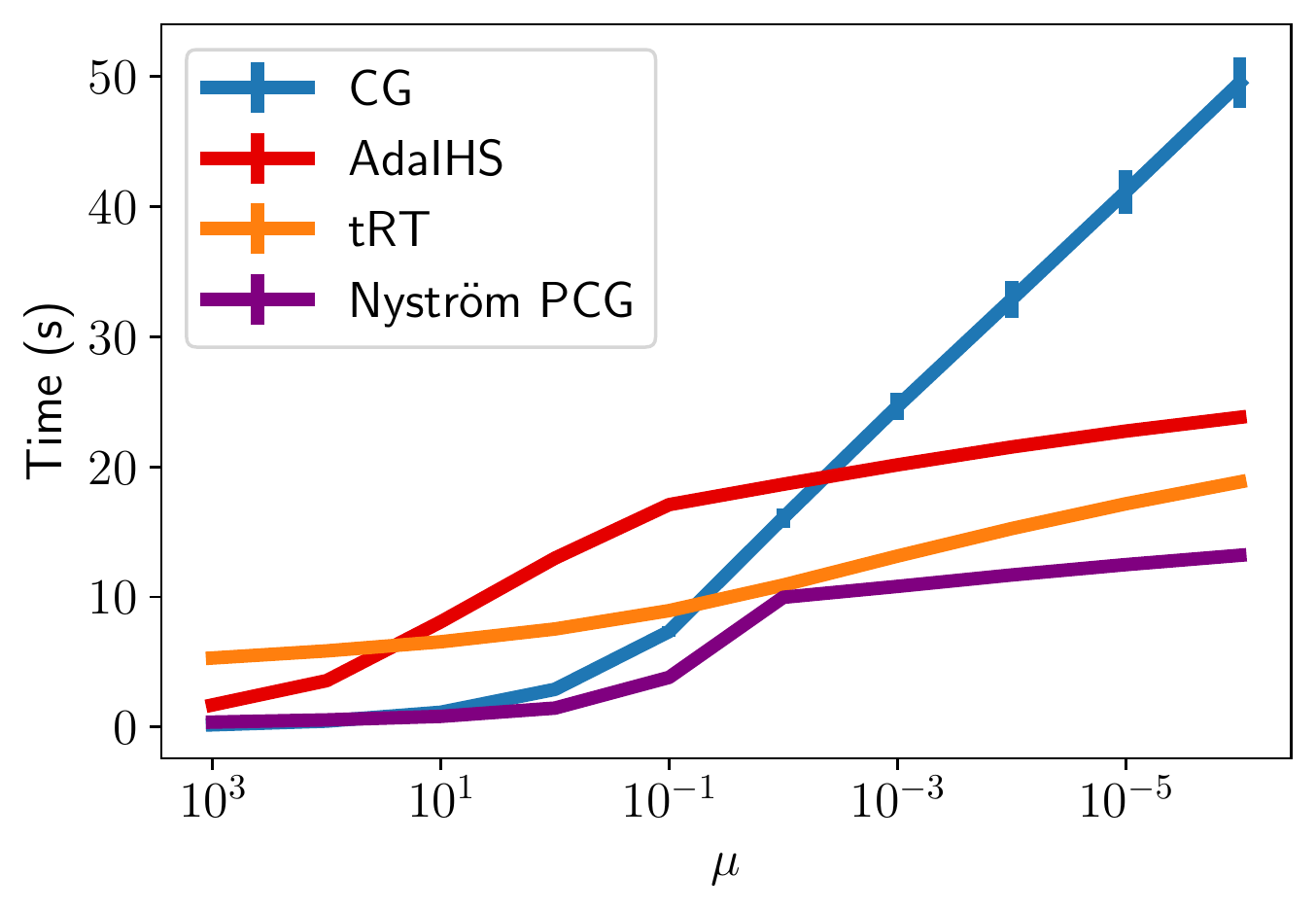}
         \caption{Guillermo: Runtime versus $\mu$}
         \label{fig:GuilCumTime}
     \end{subfigure}
     \centering
     \begin{subfigure}[b]{0.48\textwidth}
         \centering
         \includegraphics[width=\textwidth]{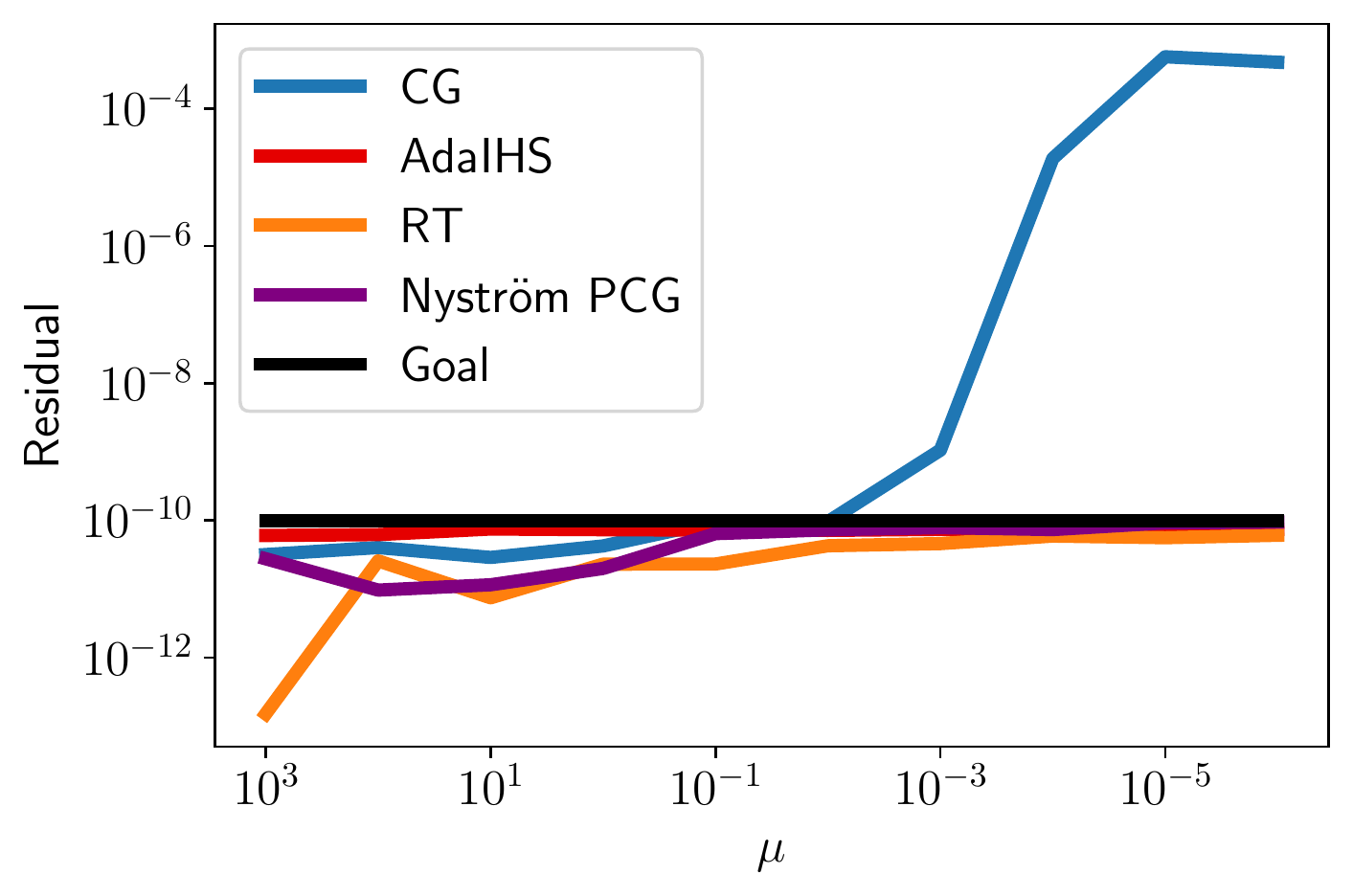}
         \caption{CIFAR 10: Residual versus $\mu$}
         \label{fig:CifarResid}
     \end{subfigure}
     \begin{subfigure}[b]{0.48\textwidth}
         \centering
         \includegraphics[width=\textwidth]{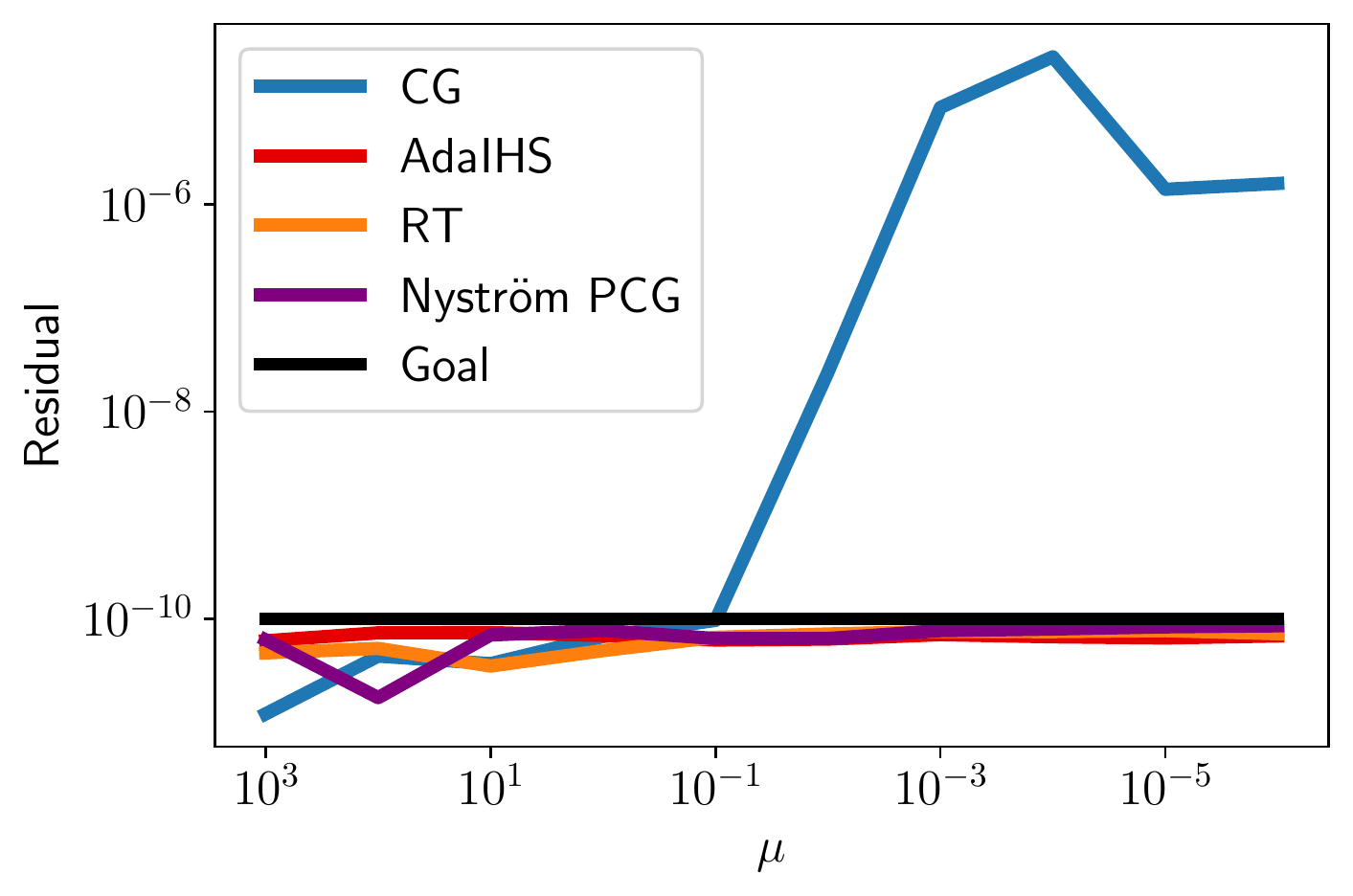}
         \caption{Guillermo: Residual versus $\mu$}
         \label{fig:GuilResid}
     \end{subfigure}     
\caption{\label{fig:RidgeRegPathPlots} \textbf{Ridge regression: Runtime and residual.}  Nystr{\"o}m PCG is either the fastest method, or it is competitive with the fastest method for all values of the regularization parameter $\mu$.  CG is generally the slowest method.  All the methods reliably achieve the target residual along the entire regularization path, except for ordinary CG at small values of $\mu$. See \cref{sec:RegPath}.} 
\end{figure}

 \cref{fig:RidgeRegPathPlots} show the effectiveness of each method for computing the entire regularization path. Nystr{\"o}m PCG is the fastest of the methods along most of the regularization path. 
 For CIFAR-10, Nystr{\"o}m PCG is faster than R\&T until the very end of the regularization path, when $d_{\textrm{eff}}(\mu) \approx d$. 
 That is, the $O(d^{3})$ cost of forming the R\&T preconditioner is not worthwhile unless $d_{\textrm{eff}}(\mu)\approx d$.
 We expect Nystr{\"o}m PCG to perform better on ridge regression problems with non-vanishing regularization.

AdaIHS is rather slow.  It increases the sketch size parameter several times along the regularization path.  Each time, AdaIHS must form a new sketch of the matrix, approximate the Hessian, and compute a Cholesky factorization. 

\subsubsection{Random features regression}

 \Cref{t-rfregress_results,t-rf-ErrCondEst} compare the performance of Nystr{\"o}m PCG, AdaIHS, and R\&T PCG for random features regression. \cref{t-rfregress_results} shows that Nystr{\"o}m PCG performs best on all datasets for all metrics.
The most striking feature is the difference between sketch sizes: 
AdaIHS and R\&T require much larger sketch sizes than Nystr{\"o}m PCG, leading to greater computation time and higher storage costs.
\cref{t-rf-ErrCondEst} contains estimates for $\|E\|$ and the condition number of the preconditioned system,
which explain the fast convergence.

\begin{table}[t]
\footnotesize
\caption{\label{t-rfregress_results} \textbf{Ridge regression: Nystr{\"o}m PCG versus AdaIHS and R\&T PCG.}  Nystr{\"o}m PCG outperforms AdaIHS and R\&T PCG in iteration and runtime complexity for both datasets.  Additionally, Nystr{\"o}m PCG requires much less storage.}
\begin{center}
\begin{tabular}{|c|c|c|c|c|}\hline
\thead{Dataset} & \thead{Method} & \thead{Final sketch size}& \thead{Number of \\ iterations} & \thead{Total \\
runtime (s)}\\ \hline
\multirow{3}{*}{shuttle-rf} & AdaIHS & $10,000$ & $66.9\hspace{2pt}(0.933)$ & $66.9\hspace{2pt}(5.27)$\\ \cline{2-5}
    & R\&T PCG & $20,000$ & $60.15$ & $242.6\hspace{2pt} (12.24) $ \\ \cline{2-5}
    & Nystr{\"o}m PCG & $800$ & $\mathbf{13.1\hspace{2pt}(1.47)}$ & $\mathbf{9.78\hspace{2pt}(0.943)}$ \\ \hline
\multirow{3}{*}{smallNORB-rf} & AdaIHS & $12,800$ & $38.7\hspace{2pt}(1.42)$ &  $41.0\hspace{2pt}(2.46)$\\ \cline{2-5}
    & R\&T PCG & $20,000$ & $34.5\hspace{2pt}(1.31)$ & $181.5\hspace{2pt}(6.53)$ \\ \cline{2-5}
    & Nystr{\"o}m PCG & $800$ & $\mathbf{31.5\hspace{2pt}(0.489)}$ & $\mathbf{6.67\hspace{2pt}(0.372)}$ \\ \hline
\multirow{3}{*}{YearMSD-rf} & AdaIHS & $30,000$ & $44$ & 1,327.3\\ \cline{2-5}
    & R\&T PCG & $30,000$ & $49$ & $766.5$ \\ \cline{2-5}
    & Nystr{\"o}m PCG & $2,000$ & $\mathbf{22}$ & $\mathbf{209.7}$ \\ \hline
\multirow{3}{*}{Higgs-rf} & AdaIHS & $6,400$ & $55$ & 1,052.7\\ \cline{2-5}
    & R\&T PCG & $20,000$ & $53$ & $607.4$ \\ \cline{2-5}
    & Nystr{\"o}m PCG & $800$ & $\mathbf{28}$ & $\mathbf{91.26}$ \\ \hline
\end{tabular}

\end{center}
\end{table}

\begin{table}[t]
    \begin{center}
\footnotesize
    \caption{\label{t-rf-ErrCondEst} \textbf{Ridge regression: Quality of Nystr{\"o}m preconditioner.}  We use Nystr{\"o}m PCG as outlined in \cref{section:LargeLStrategy}.  The first column gives an estimate for the error $\|E\|$ in the randomized Nystr{\"o}m approximation.  The second column gives an estimate for the condition number $\kappa_2(P^{-1/2}A_{\mu}P^{-1/2})$.}
    \begin{tabular}{|c|c|c|}
    \hline
    \textbf{Dataset} & \textbf{Estimate of $\| E\|$} & \thead{Estimated condition number \\ of preconditioned system}\\
    \hline
    shuttle-rf  & $\num{7.29e-13}$ & $4.17$ $(0.161)$ \\
    \hline
    smallNorb-rf & $\num{9.90e-3}$ & $18.5$ $(0.753)$ \\
    \hline
    YearMSD-rf & $\num{2.07e-4}$ & $22.7$\\
    \hline
    Higgs-rf & $\num{2.21e-3}$ & $23.8$\\
    \hline
    \end{tabular}
\end{center}
\end{table}

\subsection{Approximate cross-validation}
In this subsection we use our preconditioner to compute approximate leave-one-out cross-validation (ALOOCV), which requires solving a large linear system with multiple right-hand sides.

\label{section:ALOOCV}
\subsubsection{Background}
Cross-validation is an important machine-learning technique
to assess and select models and hyperparameters.
Generally, it requires re-fitting a model on many subsets of the data,
so can take quite a long time.
The worst culprit is leave-one-out cross-validation (LOOCV), which requires running an expensive training algorithm $n$ times. 
Recent work has developed approximate leave-one-out cross-validation (ALOOCV), a faster alternative
that replaces model retraining by a linear system solve \cite{giordano2019swiss,rad2020scalable,wilson2020approximate}. 
In particular, these techniques yield accurate and computationally tractable approximations to LOOCV.

To present the approach, we consider the infinitesimal jacknife (IJ) approximation to LOOCV \cite{giordano2019swiss,stephenson2020approximate}.
The IJ approximation computes
\begin{equation}
    \label{eq-IJ} 
    \tilde{\theta}_{\textrm{IJ}}^{n/j} = \hat{\theta}+\frac{1}{n}H^{-1}(\hat{\theta})\nabla_{\theta} l(\hat{\theta},a_{j}),
\end{equation}
where $H(\hat{\theta}) \in \R^{d \times d}$ is the Hessian of the loss function at the solution $\hat \theta$, for each datapoint $a_j$.
The main computational challenge is computing the inverse Hessian vector product $H^{-1}(\hat{\theta})\nabla_{\theta} l(\hat{\theta},a_{j})$.
When $n$ is very large, 
we can also subsample the data and average \cref{eq-IJ} over the subsample to estimate ALOOCV.
Since ALOOCV solves the same problem with several right-hand sides,
blocked PCG methods (here, Nystr{\"o}m blocked PCG) 
are the tool of choice
to efficiently solve for multiple right-hand sides at once. 
To demonstrate the idea, we perform numerical experiments on ALOOCV for logistic regression. 
The datasets we use are all from LIBSVM \cite{chang2011libsvm}; see \cref{t-logistic-datasets}.

\begin{table}[t]
\footnotesize
      \centering
        \caption{\label{t-logistic-datasets} \textbf{ALOOCV datasets and experimental parameters.}} 
        \begin{tabular}{|c|c|c|c||c|c|}
        \hline
        \textbf{Dataset} & $\mathbf{n}$ & $\mathbf{d}$ & $\mathbf{\%} \textbf{\textrm{nz}}(\mathbf{A})$
            & $\boldsymbol \mu$ & \thead{Initial sketch size}\\
        \hline
        Gisette & 6,000 & 5,000 & $99.1\%$ & 1, 1e-4 & 850\\
        \hline
        real-sim & 72,308 & 20,958 & $0.245\%$ & 1e-4, 1e-8 &  $500$ \\
        \hline 
        rcv1.binary & 20,242 & 47,236 & $0.157\%$ & 1e-4, 1e-8 & $500$\\
        \hline
        SVHN  & 73,257 & 3,072 & $100\%$  & 1, 1e-4 & 850\\
        \hline
        \end{tabular}
\end{table}
\subsubsection{Experimental overview}
We perform two sets of experiments in this section. The first set of experiments uses Gisette and SVHN to test the efficacy of Nystr{\"o}m sketch-and-solve. These datasets are small enough that we can factor $H(\theta)$ using a direct method. We also compare to block CG and block PCG with the computed Nystr{\"o}m approximation as a preconditioner. 
To assess the error due to an inexact solve for datapoint $a_j$, 
let $x_\star(a_j) = H^{-1}(\theta)\nabla_{\theta} l(\hat{\theta},a_{j})$. 
For any putative solution $\hat{x}(a_j)$,
we compute the relative error $\|\hat{x}(a_j)-x_{\star}(a_j) \|_{2}/\|x_{\star}(a_j)\|_{2}$.
We average the relative error over 100 data-points $a_j$.

 The second set of experiments uses the larger datasets real-sim and rcv1.binary and small values of $\mu$, the most challenging setting for ALOOCV. 
 We restrict our comparison to block Nystr{\"o}m PCG versus the block CG algorithm, as Nystr{\"o}m sketch-and-solve is so inaccurate in this regime.   We employ \cref{alg:AdaRandNysAppx} to construct the preconditioner for block Nystr{\"o}m PCG.
 
\subsubsection{Nystr{\"o}m sketch-and-solve}

As predicted, Nystr{\"o}m sketch-and-solve works poorly (\cref{t-gisette-svhn-results}).
When $\mu = 1$, the approximate solutions are modestly accurate,
and the accuracy degrades as $\mu$ decreases to $10^{-4}$. 
The experimental results agree with the theoretical analysis presented in \cref{Section:NystromSketchSolve}, which indicate that sketch-and-solve degrades as $\mu$ decreases.  In contrast, block CG and block Nystr{\"o}m PCG both provide high-quality solutions for each datapoint for both values of the regularization parameter. 
 
\begin{table}[t]
\footnotesize
    \caption{\label{t-gisette-svhn-results} \textbf{ALOOCV: Small datasets.}  The error for a given value of $\mu$ is the maximum relative error on $100$ randomly sampled datapoints, averaged over 20 trials.}
\begin{center}
    \begin{tabular}{|c|c|c|c|c|}
    \hline
    \thead{Dataset} &  $\bm{\mu}$ & \thead{Nystr{\"o}m \\ sketch-and-solve \\ } & \thead{Block CG } & \thead{Block \\ Nystr{\"o}m PCG } \\
    \hline
    Gisette  & $1$ & $\num{4.99e-2}$  & $\num{2.68e-11}$ & $\num{2.58e-12}$\\
    \hline
    Gisette  & $\num{1e-4}$ & $\num{1.22e-0}$ & $\num{1.19e-11}$ & $\num{6.59e-12}$ \\
    \hline
    SVHN & $1$& $\num{9.12e-5}$ & $\num{2.80e-13}$ & $\num{1.26e-13}$\\
    \hline 
    SVHN & $\num{1e-4}$ & $\num{3.42e-1}$ & $\num{2.01e-10}$ & $\num{1.41e-11}$\\
    \hline
    \end{tabular}

\end{center}
\end{table}

\subsection{Large scale ALOOCV experiments}
 \cref{t-rcv1-realsim-results-part-i} summarizes results for block Nystr{\"o}m PCG and block CG on the larger datasets. 
When $\mu = 10^{-4}$, block Nystr{\"o}m PCG offers little or no benefit over block CG
because the data matrices are very sparse (see \cref{t-logistic-datasets}) and the rcv1 problem is well-conditioned (see \cref{t-rcv1-realsim-results-part-ii}).

For $\mu = 10^{-8}$, block Nystr{\"o}m PCG reduces the number of iterations substantially,
but the speedup is negligible. 
The data matrix $A$ is sparse, which reduces the benefit of the Nystr{\"o}m method.
Block CG also benefits from the presence of multiple right-hand sides just as block Nystr{\"o}m PCG.
Indeed, O'Leary proved that the convergence of block CG depends on the ratio $(\lambda_{s}+\mu)/(\lambda_n+\mu)$, where $s$ is is the number of right-hand sides \cite{o1980block}. Consequently, multiple right-hand sides precondition block CG and accelerate convergence. 
We expect bigger gains over block CG when $A$ is dense.  

\begin{table}[t]
\footnotesize
\begin{center}
\caption{\label{t-rcv1-realsim-results-part-i} \textbf{ALOOCV: Large datasets}.  Block Nystr{\"o}m PCG outperforms block CG as $\mu$ becomes small.}
\begin{tabular}{|c|c|c|c|c|}\hline
\thead{Dataset} & $\bm{\mu}$ & \thead{Method} & \thead{Number of \\ iterations} & \thead{ 
Runtime (s)}\\ \hline
\multirow{2}{*}{rcv1} & $\num{1e-4}$ & Block CG & $12$  & $\mathbf{11.06\hspace{2pt}(0.874)}$\\ \cline{2-5}
    & $\num{1e-4}$ & Block Nystr{\"o}m PCG &  $\mathbf{10}$ & $11.87\hspace{2pt}(0.767)$ \\ \hline
\multirow{2}{*}{rcv1} & $\num{1e-8}$ & Block CG & $52$ &  $39.03\hspace{2pt}(2.97)$\\ \cline{2-5}
    & $\num{1e-8}$  & Block Nystr{\"o}m PCG & $\mathbf{15}$ & $\mathbf{24.1\hspace{2pt}(1.79)}$ \\ \hline
\multirow{2}{*}{realsim} & $\num{1e-4}$ & Block CG & $12$ &  $23.04\hspace{2pt}(2.04)$\\ \cline{2-5}
    & $\num{1e-4}$  & Block Nystr{\"o}m PCG & $\mathbf{8}$ & $\mathbf{19.05\hspace{2pt}(1.10)}$ \\ \hline
\multirow{2}{*}{realsim} & $\num{1e-8}$ & Block CG & $90$ & $163.7\hspace{2pt}(12.3)$\\ \cline{2-5}
    & $\num{1e-8}$  & Block Nystr{\"o}m PCG & $\mathbf{32}$ & $\mathbf{68.9\hspace{2pt}(5.30)}$ \\ \hline     
\end{tabular}

\end{center}
\end{table}

\subsection{Kernel ridge regression}
Our last application is kernel ridge regression (KRR),
a supervised learning technique that uses a kernel to 
model nonlinearity in the data. KRR leads to large dense linear systems that are challenging to solve.   
\label{section:KernelRidgeRegression}

\subsubsection{Background}
We briefly review KRR \cite{scholkopf2002learning}. Given a dataset of inputs $x_i \in \mathcal{D}$ and their corresponding outputs $b_i \in \R$ for $i = 1,\dots, n$
and a kernel function $\mathcal{K}(x,y)$,
KRR finds a function $f_{\star}: \mathcal{D} \to \R$ in the associated reproducing kernel Hilbert space $\mathcal{H}$ that best predicts the outputs for the given inputs. 
The solution $f_{\star}\in \mathcal{H}$ minimizes the square error 
subject to a complexity penalty:
\begin{equation}
   \label{KRR}
   f_\star = \argmin_{f\in \Hc} \quad \frac{1}{2n}\sum_{i=1}^{n}(f(x_{i})-b_{i})^{2}+\frac{\mu}{2}\|f\|^2_{\Hc},
\end{equation}
where $\|\cdot\|_\Hc$ denotes the norm on $\Hc$. 
Define the kernel matrix $K \in \R^{n \times n}$ with entries $K_{ij} = \mathcal{K}(x_i,x_j)$. 
The representer theorem \cite{steinwart2008support} states the solution to (\ref{KRR}) is 
\[
f_\star(x) = \sum_{i=1}^{n}\alpha_i \mathcal{K}(x,x_{i}),
\]
where $\alpha = (\alpha_1, \ldots, \alpha_n )$ solves the linear system
\begin{equation}
\label{KRRSys}
    (K+n\mu I)\alpha = b.
\end{equation}
Solving the linear system (\ref{KRRSys}) is the computational bottleneck of KRR. Direct factorization methods to solve \eqref{KRRSys} are prohibitive for large $n$ as their costs grow as $n^3$; for $n > 10^{4}$ or so, iterative methods are generally preferred.
However, $K$ is often extremely ill-conditioned, even with the regularization term $n\mu I$. As a result, CG for Problem \eqref{KRRSys} converges slowly.

\subsubsection{Experimental overview}
We use
Nystr{\"o}m PCG to solve several KRR problems derived from classification problems on real world datasets from \cite{chang2011libsvm,vanschoren2014openml}.
For all experiments, we use the Gaussian kernel $\mathcal{K}(x,y) = \exp(-\|x-y\|^{2} / (2\sigma^2))$. 
We compare our method to random features PCG, proposed in \cite{avron2017faster}. 
We do not compare to vanilla CG as it is much slower than Nystr{\"o}m PCG 
and random features PCG. 

All datasets either come with specified test sets, or we create one from a random $80$-$20$ split. 
The PCG tolerance, $\sigma$, and $\mu$ were all chosen to achieve good performance 
on the test sets (see \cref{t-krr-ranks} below). 
Both methods were allowed to run for a maximum of 500 iterations. 
The statistics for each dataset and the experimental parameters are given in 
\cref{t-KRR-datasets}. 

We run two sets of experiments. For the datasets with $n<10^{5}$ we run oracle random features PCG  against two versions of the Nystr{\"o}m PCG algorithm.
The first version uses the oracle best value of $\ell$ found by grid search (from the same grid used to select $m_{\textrm{rf}}$) to minimize the total runtime,
and the second is the adaptive algorithm described in \cref{section:RatioRankDoubling}. The grid for $\ell$ and $m_{\textrm{rf}}$ is restriced to less than $10,000$ to keep the preconditioners cheap to apply and store. 
The adaptive algorithm for each dataset was initialized at $\ell = 2,000$, which is smaller than $0.05n$ for all datasets.
For the datasets with $n\geq 10^{5}$, we restricted both $\ell$ and $m_{\textrm{rf}}$ to $1,000$, which corresponds to less than $0.01n$. We then run both algorithms till they reach the desired tolerance or the maximum number of iterations are reached.  

We use column sampling to construct the Nystr{\"o}m preconditioner for all KRR problems: on these problems, random projections take longer and yield similar performance (with somewhat lower variance).

\begin{table}[t]
\footnotesize
\begin{center}
    \caption{\label{t-KRR-datasets} \textbf{Kernel ridge regression datasets and experimental parameters.}}
    \begin{tabular}{|c|c|c|c|c|c|c|}
    \hline
    \textbf{Dataset} & $\mathbf{n}$ & $\mathbf{d}$ & $\mathbf{n}_{\textbf{\textrm{classes}}}$ & $\boldsymbol \mu$ & $\boldsymbol\sigma$ & \textbf{PCG tolerance}\\
    \hline
    ijcnn1  & 49,990 & 49 & 2 & 1e-6 & 0.5 & 1e-3 \\
    \hline
    MNIST & 60,000 & 784 & 10 & 1e-7 & 5 & 1e-4 \\
    \hline
    Sensorless & 48,509 & 48 & 11 & 1e-8 & 0.8 & 1e-4 \\
    \hline 
    SensIT & 78,823 & 100 & 3 & 1e-8 & 3 & 1e-3 \\
    \hline
    MiniBooNE & 104,052 & 50 & 2 & 1e-7& 5 & 1e-4 \\
    \hline
    EMNIST-Balanced & 105,280 & 784 & 47 &1e-6 & 8 & 1e-3\\
    \hline
    Santander & 160,000 & 200 & 2 & 1e-6 & 7 & 1e-3\\
    \hline
    \end{tabular}
\end{center}
\end{table}

\subsubsection{Experimental results}

\Cref{t-krr-results,t-largekrr-results,t-krr-ranks} summarize the results for the KRR experiments. 
\Cref{t-krr-results} shows that both versions of Nystr{\"o}m PCG deliver better performance than random features preconditioning on all the datasets considered. Nystr{\"o}m PCG also uses less storage. \Cref{t-largekrr-results} shows that Nystro{\"o}m PCG yields better performance than random features PCG on the larger scale datasets when both are restricted to ranks of $1,000$.  
\cref{t-krr-ranks} shows the adaptive strategy proposed in \cref{section:RatioRankDoubling} to select $\ell$ works very well. 
In contrast, it is difficult to choose $m_\textrm{rf}$ for random features preconditioning: the authors of \cite{avron2017faster} provide no guidance except for the polynomial kernel. 
  
\begin{table}[t]
\footnotesize
\begin{center}
\caption{\label{t-krr-results} \textbf{Kernel ridge regression: Iteration count and runtime.}  The adaptive and oracle Nystr{\"o}m PCG algorithms outperform oracle random features PCG in both time and iteration complexity.}
\begin{tabular}{|c|c|c|c|}\hline
\thead{Dataset} & \thead{Method} & \thead{Number of \\ iterations} & \thead{Total \\ runtime (s)}\\ \hline
\multirow{3}{*}{icjnn1} & Oracle random features PCG & $63.8(2.66)$ & $38.3(2.33)$\\ \cline{2-4}
    & Adaptive Nystr{\"o}m PCG & $43.7(1.77)$ & $\mathbf{32.0(1.47)}$ \\\cline{2-4}
    & Oracle Nystr{\"o}m PCG & $\mathbf{31.8(0.835)}$ & $33.3(1.60)$ \\ \hline
\multirow{3}{*}{MNIST} & Oracle random features PCG & $314.5(2.88)$ & $254.7(6.93)$\\ \cline{2-4}
    & Adaptive Nystr{\"o}m PCG & $78.5(17.65)$ & $148.1(46.39)$ \\\cline{2-4}
    & Oracle Nystr{\"o}m PCG & $\mathbf{77.9(2.08)}$ & $\mathbf{91.7(2.08)}$\\ \hline
\multirow{3}{*}{Sensorless} & Oracle random features PCG & $55.4(2.35)$ & $39.9(3.96)$ \\ \cline{2-4}
    & Adaptive Nystr{\"o}m PCG & $22.0(0.510)$ & $24.3(1.26)$\\\cline{2-4}
    & Oracle Nystr{\"o}m PCG& $\mathbf{21.7(0.571)}$ & $\mathbf{22.7(1.63)}$ \\ \hline
\multirow{3}{*}{SensIT} & Oracle random features PCG & $68.0(4.31)$ & $95.2(6.19)$ \\ \cline{2-4}
    & Adaptive Nystr{\"o}m PCG & $\mathbf{47.8(1.72)}$ & $70.1(2.43)$\\\cline{2-4}
    & Oracle Nystr{\"o}m PCG & $48.7(3.40)$ & $\mathbf{61.6(6.41)}$ \\ \hline
\end{tabular}
\end{center}
\end{table}

\begin{table}[t]
\footnotesize
\caption{\label{t-largekrr-results} \textbf{Kernel ridge regression regression: Nystr{\"o}m PCG fixed rank versus random features PCG fixed rank.}  Nystr{\"o}m PCG again outperforms random features PCG in iteration and time complexity.}
\begin{center}
\begin{tabular}{|c|c|c|c|}\hline
\thead{Dataset} & \thead{Method} & \thead{Number of \\ iterations} & \thead{Total \\
runtime (s)}\\ \hline
\multirow{2}{*}{MiniBooNE} & Random Features PCG & $92$ & $154.2$\\ \cline{2-4}
    & Nystr{\"o}m PCG & $\mathbf{72}$ & $\mathbf{137.4}$ \\ \hline
\multirow{2}{*}{EMNIST-Balanced} & Random Features PCG & $154$ &  $635.2$\\ \cline{2-4}
    & Nystr{\"o}m PCG & $\mathbf{32}$ & $\mathbf{268.4}$ \\ \hline 
\multirow{2}{*}{Santander} & Random Features PCG & $160$ &  $810.4$\\ \cline{2-4}
    & Nystr{\"o}m PCG & $\mathbf{31}$ & $\mathbf{164.8}$ \\ \hline      
\end{tabular}

\end{center}
\end{table}

\begin{table}[ht]
\footnotesize
\begin{center}
    \caption{\label{t-krr-ranks} \textbf{Kernel ridge regression: Oracle parameters and test errors.} The final ranks selected by the adaptive algorithm are almost in full agreement with the oracle ranks. Furthermore, the adaptive and oracle ranks for Nystr{\"o}m PCG are never larger than $m_{\textrm{rf}}$.}
    \begin{tabular}{|c|c|c|c|c|}
    \hline
    \textbf{Dataset} & \thead{Adaptive \\Nystr{\"o}m \\ final rank} & \thead{Oracle \\ Nystr{\"o}m \\ rank} & \thead{Oracle} $\mathbf{m_{\textrm{rf}}}$ & \thead{Test set error}  \\
    \hline
    ijcnn1  & $2,000$ & $3,000$ & $3,000$ & $1.25\%$\\
    \hline
    MNIST & $6,000\hspace{2pt} (1,716)$ & $4,000$ & $9,000$ & $1.22\%$ \\
    \hline
    Sensorless & $2,000$ & $2,000$ & $5,000$ & $2.01\%$ \\
    \hline 
    SensIT & $2,000$ & $2,000$ & $7,000$ & $12.83\%$ \\
    \hline
    MiniBooNE & NA & NA & NA & $7.93\%$ \\
    \hline
    EMNIST-Balanced &NA &NA &NA & $15\%$ \\
    \hline
    Santander & NA &NA &NA & $8.90\%$\\
    \hline
    \end{tabular}

\end{center}
\end{table}

\section{Conclusion}
We have shown that Nystr{\"o}m PCG delivers a strong benefit 
over standard CG both in the theory and in practice,
thanks to the ease of parameter selection,
on a range of interesting large-scale computational problems
including ridge regression, kernel ridge regression, and ALOOCV.
In our experience, Nystr{\"o}m PCG outperforms all generic methods
for solving large-scale dense linear systems with spectral decay.
It is our hope that this paper motivates further research on randomized preconditioning for solving large scale linear systems
and offers a useful speedup to practitioners.


\bibliographystyle{siamplain}
\bibliography{references}

\appendix
\section{Proofs of main results}

This appendix contains full proofs of the main results that are substantially novel (\cref{NysInvBnd,NysPCGThm,ConvergenceCorollary}).
The supplement contains proofs of results that are similar to existing work, but do not appear explicitly in the literature.

\subsection{Proof \cref{NysInvBnd}}
\label{section:NysSketchSolveProof}

This result contains the analysis of the Nystr{\"o}m sketch-and-solve method.
We begin with \cref{eq-NysInvExpErr}, which provides an error bound that compares the regularized inverse of
a psd matrix $A$ with the regularized inverse of the randomized Nystr{\"o}m approximation $\hat{A}_{\mathrm{nys}}$.
Since $0 \preceq \hat{A}_{\mathrm{nys}} \preceq A$, we can apply~\cref{GenInvErrBnd} to obtain a deterministic
bound for the discrepancy:
\[
\|(\hat{A}_{\textrm{nys}}+\mu I)^{-1}-(A+\mu I)^{-1}\|\leq \frac{1}{\mu}\frac{\|E\|}{\|E\|+\mu}
\quad\text{where $E = A - \hat{A}_{\mathrm{nys}}$.}
\]
The function $f(t) = t/(t+\mu)$ is concave, so we can take expectations and invoke Jensen's inequality to obtain
\[
\E\|(\hat{A}_{\textrm{nys}}+\mu I)^{-1}-(A+\mu I)^{-1}\|\leq \frac{1}{\mu}\frac{\E\|E\|}{\E\|E\|+\mu}.
\]
Inserting the bound \cref{eq-RandNysAppxErrBnd} on $\E \| E \|$ from \cref{NysExpCorr} gives
\[\E\|(\hat{A}_{\textrm{nys}}+\mu I)^{-1}-(A+\mu I)^{-1}\|
    \leq \frac{1}{\mu} \cdot \frac{(3+ 4\textup{e}^{2} \textup{sr}_{p}(A) / p)\lambda_{p}}
    {\mu + (3+4\textup{e}^{2}\textup{sr}_{p}(A)/p)\lambda_{p} }.\] 
To conclude, observe that the denominator of the second fraction exceeds
$\mu + \lambda_p$.


Now, let us establish \cref{eq-NysSSRelErr}, the error bound for Nystr{\"o}m sketch-and-solve.
Introduce the solution $\hat{x}$ to the Nystr{\"o}m sketch-and-solve problem and the solution
$x_{\star}$ to the regularized linear system:
\begin{align*}
    (\hat{A}_{\textrm{nys}}+\mu I)\hat{x} = b  \quad\text{and}\quad
    (A+\mu I)x_{\star} = b. 
\end{align*}
We may decompose the regularized matrix as $A+\mu I = \hat{A}_{\textrm{nys}}+\mu I+E$.
Subtract the two equations in the last display to obtain
\[(\hat{A}_{\textrm{nys}}+\mu I)(\hat{x}-x_{\star})-Ex_{\star} = 0.\]
Rearranging to isolate the error in the solution, we have
\[\hat{x}-x_{\star} = (\hat{A}_{\textrm{nys}}+\mu I)^{-1}Ex_{\star}.\]
Take the norm, apply the operator norm inequality, and use the elementary bound
$\| (\hat{A}_{\mathrm{nys}} + \mu I)^{-1} \| \leq \mu^{-1}$.  We obtain
\[\frac{\|\hat{x}-x_{\star}\|_{2}}{\|x_{\star}\|_{2}}\leq \frac{\|E\|}{\mu}.\]
Finally, take the expectation and repeat the argument used to control $\E\|E\|/\mu$
in the proof of \cref{NysPCGEffDimThm}.
\qed

\subsubsection{Proof of \cref{NysPCGThm}}
\label{section:NysPCGThmProof}

Let $\hat{A} = U \hat{\Lambda} U^T$ be an arbitrary rank-$\ell$ Nystr{\"o}m approximation
whose $\ell$th eigenvalue is $\hat{\lambda}_{\ell}$.
\Cref{NysPCGThm} provides a deterministic bound on the condition number of
the regularized matrix $A_{\mu}$ after preconditioning with
\[
P = \frac{1}{\hat{\lambda}_\ell + \mu} U (\hat{\Lambda} + \mu I) U^T + (I - UU^T). 
\]
We remind the reader that this argument is completely deterministic.

First, note that the preconditioned matrix $P^{-1/2}A_{\mu}P^{-1/2}$ is psd, so
\[\kappa_{2}(P^{-1/2}A_{\mu}P^{-1/2}) = \frac{\lambda_1(P^{-1/2}A_{\mu}P^{-1/2})}{\lambda_n(P^{-1/2}A_{\mu}P^{-1/2})}.\]
Let us begin with the upper bound on the condition number.  We have the decomposition
\begin{equation}
  \label{KeyDecomp}
  P^{-1/2}A_{\mu}P^{-1/2} = P^{-1/2}(\hat{A}+\mu I)P^{-1/2} +P^{-1/2}EP^{-1/2},
\end{equation}
owing to the relation $A_{\mu} = \hat{A} +\mu I+E$.
Recall that the error matrix $E$ is psd, so the matrix $P^{-1/2}EP^{-1/2}$ is also psd.

First, we bound the maximum eigenvalue.  Weyl's inequalities imply that
\[\lambda_1(P^{-1/2}A_{\mu}P^{-1/2})\leq \lambda_1(P^{-1/2}(\hat{A} +\mu I)P^{-1/2})+\lambda_{1}(P^{-1/2}EP^{-1/2}).\]
A short calculation shows that $\lambda_1(P^{-1/2}(\hat{A}+\mu I)P^{-1/2}) = \hat{\lambda}_{\ell}+\mu$.
When $\ell < n$, we have $\lambda_{1}(P^{-1}) = 1$.  Therefore,
\[
\lambda_{1}(P^{-1/2}EP^{-1/2}) = \lambda_{1}(P^{-1}E) \leq \lambda_{1}(P^{-1})\lambda_{1}(E) = \lambda_{1}(E) = \|E\|.
\]
In summary,
\begin{equation}
\label{TopEigValBnd}
    \lambda_1(P^{-1/2}A_{\mu}P^{-1/2})\leq \hat{\lambda}_{\ell}+\mu+\|E\|.
\end{equation}
For the minimum eigenvalue, we first assume that $\mu>0$.   Apply Weyl's inequality to \cref{KeyDecomp} to obtain
to obtain
\begin{equation}
\label{SmallEigValBnd1}
\begin{aligned}
\lambda_{n}(P^{-1/2}A_\mu P^{-1/2}) &\geq \lambda_{n}(P^{-1/2}(\hat{A}+\mu I)P^{-1/2})+\lambda_{n}(P^{-1/2}EP^{-1/2}) \\
    &\geq \lambda_{n}(P^{-1/2}(\hat{A}+\mu I)P^{-1/2}) = \mu.
\end{aligned}
\end{equation}
Combining \cref{TopEigValBnd} and \cref{SmallEigValBnd1}, we reach
\[
\kappa_{2}(P^{-1/2}A_{\mu}P^{-1/2}) \leq \frac{\hat{\lambda}_{\ell}+\mu+\|E\|}{\mu}.
\]
This gives a bound for the maximum in case $\mu > 0$.

If we only have $\mu \geq 0$, then a different argument is required for the smallest eigenvalue.
Assume that $A$ is positive definite, in which case $\hat{\lambda}_{\ell} > 0$.
By similarity,
\[\lambda_n(P^{-1/2}A_\mu P^{-1/2})
    = \lambda_n(A_{\mu}^{-1/2}PA_{\mu}^{-1/2}))
    = \frac{1}{\lambda_{1}(A_{\mu}^{-1/2}PA_{\mu}^{-1/2})}.
\]
It suffices to produce an upper bound for $\lambda_{1}(A_{\mu}^{-1/2}PA_{\mu}^{-1/2})$.
To that end, we expand
\begin{align*}
\lambda_{1}(A_{\mu}^{-1/2}PA_{\mu}^{-1/2})
    &= \lambda_{1}\bigg(A_{\mu}^{-1/2}\left(\frac{1}{\hat{\lambda}_{\ell}+\mu}(\hat{A}+\mu UU^{T})+(I-UU)^{T}\right)A_{\mu}^{-1/2}\bigg) \\
    &\leq \frac{1}{\hat{\lambda}_{\ell}+\mu}\lambda_{1}\left(A_{\mu}^{-1/2}(\hat{A}+\mu UU^{T})A_{\mu}^{-1/2}\right)\\
    &+\lambda_{1}\bigg(A_{\mu}^{-1/2}\left(I-UU^{T}\right)A_{\mu}^{-1/2}\bigg).
\end{align*}
The second inequality is Weyl's.
Since $\hat{A}\preceq A$, we have $\hat{A} +\mu UU^{T} \preceq A_{\mu}$.
The last display simplifies to
\[
\lambda_{1}(A_{\mu}^{-1/2}PA_{\mu}^{-1/2}) \leq \frac{1}{\hat{\lambda}_{\ell}+\mu}+\frac{1}{\lambda_{n}+\mu}.
\]
Putting the pieces together with \cref{TopEigValBnd}, we obtain
\[\kappa_{2}(P^{-1/2}A_{\mu}P^{-1/2}) \leq (\hat{\lambda}_{\ell}+\mu+\|E\|)\bigg(\frac{1}{\hat{\lambda}_{\ell}+\mu}+\frac{1}{\lambda_{n}+\mu}\bigg).\]
Thus,
\[\kappa_{2}(P^{-1/2}A_{\mu}P^{-1/2})\leq \left(\hat{\lambda}_{\ell}+\mu+\|E\|\right)\min\bigg\{\frac{1}{\mu},~
    \frac{\hat{\lambda}_{\ell} + \lambda_n + 2 \mu}{(\hat{\lambda}_\ell + \mu)(\lambda_n + \mu)}\bigg\}.
\] 
This formula is valid when $A$ is positive definite or when $\mu > 0$.

We now prove the lower bound on $\kappa_{2}(P^{-1/2}A_{\mu}P^{-1/2})$. Returning to \cref{KeyDecomp} and invoking Weyl's inequalities yields
\[ \lambda_1(P^{-1/2}A_{\mu}P^{-1/2})\geq \lambda_1(P^{-1/2}(\hat{A}+\mu I)P^{-1/2})+\lambda_{n}(P^{-1/2}EP^{-1/2})\geq \hat{\lambda}_{\ell}+\mu.\]
For the smallest eigenvalue we observe that
\[\lambda_{n}(P^{-1/2}A_{\mu}P^{-1/2}) = \lambda_{n}(A_{\mu}P^{-1}) \leq \lambda_{n}(A_{\mu})\lambda_{1}(P^{-1}) = \lambda_{n}+\mu.\]
Combining the last two displays, we obtain
\[
\frac{\hat{\lambda}_{\ell}+\mu}{\lambda_{n}+\mu}\leq \kappa_{2}(P^{-1/2}A_{\mu}P^{-1/2}).
\]
Condition numbers always exceed one, so
\[
\max\bigg\{\frac{\hat{\lambda}_{\ell}+\mu}{\lambda_{n}+\mu},1\bigg\}\leq \kappa_{2}(P^{-1/2}A_{\mu}P^{-1/2}).
\]
This point concludes the argument.

\subsubsection{Proof of \cref{KeyLemma}}
\label{section:KeyLemmaProof}

\Cref{KeyLemma} establishes the central facts about the effective dimension.
First, we prove \cref{l-eff-max}.
Fix a parameter $\gamma \geq 1$, and set $j_{\star} = \max\{1\leq j\leq n: \lambda_{j}>\gamma\mu\}$.
We can bound the effective dimension below by the following mechanism.
\[
d_{\textrm{eff}}(\mu)
    =  \sum^{n}_{j=1}\frac{\lambda_{j}}{\lambda_{j}+\mu}
    \geq \sum^{j_{\star}}_{j=1}\frac{\lambda_{j}}{\lambda_{j}+\mu}\geq  j_{\star} \cdot \frac{\lambda_{j_{\star}}}{\lambda_{j_{\star}}+\mu}.
\]
We have used the fact that $t \mapsto t/(1+t)$ is increasing for $t \geq 0$,
Solving for $j_{\star}$, we determine that
\[
j_{\star}\leq(1+\mu/\lambda_{j_{\star}})d_{\textrm{eff}}(\mu)<(1+\gamma^{-1})d_{\textrm{eff}}(\mu).
\]
The last inequality depends on the definition of $j_{\star}$.  This is the required result.

\cref{l-eff-avg} follows from a short calculation:
\[
\begin{aligned}
\frac{1}{k}\sum_{j>k}\lambda_{j}
    &= \frac{\lambda_k+\mu}{k}\sum_{j>k}\frac{\lambda_{j}}{\lambda_{k}+\mu}
    \leq \frac{\lambda_k+\mu}{k}\sum_{j>k}\frac{\lambda_{j}}{\lambda_{j}+\mu} \\
    &= \frac{\lambda_k+\mu}{k}\left(d_{\textrm{eff}}(\mu)-\sum^{k}_{j=1}\frac{\lambda_{j}}{\lambda_{j}+\mu}\right)
    \leq\frac{\lambda_k+\mu}{k}\left(d_{\textrm{eff}}(\mu)-\frac{k\lambda_{k}}{\lambda_{k}+\mu}\right) \\
    &= \frac{\mu \,d_{\textrm{eff}}(\mu)}{k} +\lambda_{k}\left(\frac{d_{\textrm{eff}}(\mu)}{k}-1\right)
    \leq \frac{\mu \, d_{\textrm{eff}}(\mu)}{k}.
\end{aligned}
\]
The last inequality depends on the assumption that $k\geq d_{\textrm{eff}}(\mu)$.

\subsection{Proof of \cref{ConvergenceCorollary}}
\label{section:ConvergenceCorollaryPf}

This result gives a bound for the relative error $\delta_t$ in the iterates of PCG.
Recall the standard convergence bound for CG~\cite[Theorem 38.5]{trefethen1997numerical}:
\[
\delta_{t}\leq 2 \, \bigg(\frac{\sqrt{\kappa_{2}(P^{-1/2}A_{\mu}P^{-1/2})}-1}{\sqrt{\kappa_{2}(P^{-1/2}A_{\mu}P^{-1/2})}+1}\bigg)^{t}.
\]
We conditioned on the event that $\{\kappa(P^{-1/2}A_{\mu}P^{-1/2}) \leq 56\}$.  On this event,
the relative error must satisfy
\[
\delta_{t} < 2 \, \left( \frac{\sqrt{56} - 1}{\sqrt{56} + 1}\right)^t
    \leq 2 \cdot (0.77)^{t}.
\]
Solving for $t$, we see that $\delta_t < \epsilon$ when $t\geq \lceil 3.8\log\left( 1/\epsilon \right)\rceil$.  This concludes the proof.
\subsection{Proof of \cref{AdaNysRankThm}}
\cref{AdaNysRankThm} establishes that with high probability \cref{alg:AdaRandNysAppx} terminates in a logarithmic number of steps, the sketch size remains $O(d_{\textrm{eff}}(\delta\tau\mu))$, and PCG with the preconditioner constructed from the output converges fast. 
\label{section:AdaNysRankThmPf}

\begin{proof}
We first recall that \cref{alg:AdaRandNysAppx} terminates when the event
\[\mathcal{E} = \{\|E\|\leq \tau\mu\}\cap\{\hat{\lambda}_{\ell}\leq \frac{\tau\mu}{11}\}\] 
holds. Observe that conditioned on $\mathcal{E}$, \cref{NysPCGThm} yields
\[\kappa_{2}(P^{-1/2}A_{\mu}P^{-1/2})\leq \frac{\hat{\lambda}_{\ell}+\mu+\|E\|}{\mu} \leq 1+(1+\frac{1}{11})\tau = 1+\frac{12}{11}\tau.\]
Statement 3 now follows from the above display and the standard convergence theorem for CG.

Now, if \cref{alg:AdaRandNysAppx} terminates with $N\leq \lceil\log_{2}\left(\tilde{\ell}/\ell_{0}\right)\rceil-1$ steps of sketch size doubling, then $\mathcal{E}$ holds with probability $1$.
Statement 3 then follows by our initial observation, while statements 1 and 2 hold trivially. Hence statements $1$-$3$ all hold if the algorithm terminates in $N\leq \lceil\log_{2}\left(\tilde{\ell}/\ell_{0}\right)\rceil-1$ steps.

Thus to conclude the proof, it suffices to show that if $N \geq \lceil\log_{2}\left(\tilde{\ell}/\ell_{0}\right)\rceil$, then $\mathcal{E}$ holds with probability at least $1-\delta$, which implies that statements $1$-$3$ hold with probability at least $1-\delta$, as above. 

We now show that $\mathcal{E}$  holds with probability at least $1-\delta$ when $N = \lceil\log_{2}\left(\tilde{\ell}/\ell_{0}\right)\rceil$. To see this note that when $N = \lceil\log_{2}\left(\tilde{\ell}/\ell_{0}\right)\rceil$, we have $\ell\geq \tilde{\ell}$. Consequently, we may
invoke \cref{NysExpBound} with $p = \lceil 2d_{\textup{eff}}\left(\frac{\delta\tau\mu}{11}\right)\rceil+1 $ and \cref{KeyLemma} to show
\begin{align*}
  \mathbb{E}[\|E\|] &\overset{(1)}{\leq} 3\lambda_{p}+\frac{2\textrm{e}^{2}}{p}\left(\sum^{n}_{j=p}\lambda_{j}\right)\\
  &\overset{(2)}{\leq} 3\frac{\delta \tau \mu}{11}+2\textrm{e}^{2}\frac{d_{\textrm{eff}}(\delta \tau \mu/11)}{p}\frac{\delta \tau \mu}{11}\\
  &\overset{(3)}{\leq} \frac{3}{11}\delta \tau \mu+\frac{2\textrm{e}^{2}}{2}\frac{\delta \tau \mu}{11}= \left(\frac{3+\textrm{e}^{2}}{11}\right)\delta \tau \mu \leq \delta \tau \mu.
\end{align*}
Where step (1) uses \cref{NysExpBound}, step (2) uses items 1 and 2 of \cref{KeyLemma} with $\gamma = 1$, and step (3) follows from $p\geq 2d_{\textrm{eff}}(\delta\tau\mu)$.
Thus,
\[\mathbb{E}[\|E\|] \leq \delta\tau \mu.\]
By Markov's inequality,
\[\mathbb{P}\{\|E\|>\tau \mu\}\leq \delta.\]
Hence $\{\|E\|\leq \tau \mu\}$ holds with probability at least $1-\delta$. Furthermore, by \cref{KeyLemma} we have $\{\hat{\lambda}_{\ell}\leq \delta \tau \mu/11\}$ with probability $1$ as $\hat{\lambda}_{\ell}\leq \lambda_{\ell}\leq \lambda_{p}$.  Thus when $N = \lceil\log_{2}\left(\tilde{\ell}/\ell_{0}\right)\rceil$, $\mathcal{E}$ holds with probability at least $1-\delta$, this immediately implies statements 1 and 3. Statement 2 follows as
\[\ell = 2^{N}\ell_0 \leq 2^{\log_{2}\left(\tilde{\ell}/\ell_{0}\right)+1}\ell_0 = 2\tilde{\ell} = 4\lceil2d_{\textup{eff}}\left(\frac{\delta\tau\mu}{11}\right)\rceil+2,\] 
where in the first inequality we used $\lceil x\rceil\leq x+1$, this completes the proof. 
\end{proof}

\subsection{Proof of \cref{AdaRankRatProp}}
\cref{AdaRankRatProp} shows once $\ell = \Omega\left(d_{\textrm{eff}}(\tau \mu)\right)$, then with high probability $\kappa_{2}(P^{-1/2}A_{\mu}P^{-1/2})$ differs from $(\hat{\lambda}_{\ell}+\mu)$ by at most a constant.
\begin{proof}
\cref{NysPCGThm} implies that
\[\left(\kappa_2(P^{-1/2}A_{\mu}P^{-1/2})-\frac{\hat{\lambda}_{\ell}+\mu}{\mu}\right)_{+}\leq \frac{\|E\|}{\mu}.\]
 Combining the previous display with Markov's inequality yields 
\[\mathbb{P}\left\{\left(\kappa_2(P^{-1/2}A_{\mu}P^{-1/2})-\frac{\hat{\lambda}_{\ell}+\mu}{\mu}\right)_{+}>\frac{\tau}{\delta}\right\} \leq \frac{\delta}{\tau}\frac{\mathbb{E}[\|E\|]}{\mu}.\]
Now, our choice of $\ell$ combined with \cref{NysExpBound} and \cref{KeyLemma} implies that $\mathbb{E}[\|E\|]\leq \tau\mu$. 
Hence we have
\[\mathbb{P}\left\{\left(\kappa_2(P^{-1/2}A_{\mu}P^{-1/2})-\frac{\hat{\lambda}_{\ell}+\mu}{\mu}\right)_{+}>\frac{\tau}{\delta}\right\} \leq \delta,\]
which implies the desired claim.
\end{proof}

\section{Proofs of additional propositions}
In this section we give the proofs for the propositions that are close to existing results but do not explicitly appear in the literature.
\subsubsection{Useful facts about Gaussian random matrices}
In this subsection we record some useful results about Gaussian random matrices that are necessary for the proof of \cref{NysExpBound}. The proof of \cref{NysExpBound} follows in \cref{Section:NysExpBoundPf}.
\begin{proposition}[\cite{halko2011finding,nakatsukasa2020fast}]
\label{GaussInvLemma}
Let $G$ be $(\ell-p)\times \ell$ standard Gaussian matrix with $\ell\geq 4$ and $2\leq p \leq \ell-2$. Then
\begin{equation}
    \left(\E\|G^{\dagger}\|^{2}_{F}\right)^{1/2} = \sqrt{\frac{\ell-p}{p-1}},
\end{equation}
and
\begin{equation}
 \left(\E\|G^{\dagger}\|^{2}\right)^{1/2} \leq \textup{e}\sqrt{\frac{\ell}{p^{2}-1}}.
\end{equation}
\end{proposition}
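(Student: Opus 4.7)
The plan is to reduce both statements to known properties of the Wishart and inverse Wishart distributions, using that $W := GG^{T}$ is a $(\ell-p) \times (\ell-p)$ Wishart matrix with $\ell$ degrees of freedom and identity scale, i.e.\ $W \sim W_{\ell-p}(\ell,I)$.

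\textbf{Frobenius identity.} Since $\ell - p < \ell$, the matrix $G$ has full row rank with probability one, so $G^{\dagger} = G^{T}(GG^{T})^{-1}$ and
\[
\|G^{\dagger}\|_{F}^{2} = \tr\!\bigl(G^{\dagger}(G^{\dagger})^{T}\bigr) = \tr\!\bigl((GG^{T})^{-1}\bigr)
\]
by the cyclic property of the trace. The condition $p \geq 2$ ensures $\ell > (\ell-p)+1$, which is precisely the regime in which the inverse Wishart has a finite first moment: $\E[W^{-1}] = (p-1)^{-1} I_{\ell-p}$. Taking traces gives $\E\|G^{\dagger}\|_{F}^{2} = (\ell-p)/(p-1)$, which matches the stated identity after taking square roots.

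\textbf{Spectral bound.} Here I would adopt the $q$th-moment strategy of Halko--Martinsson--Tropp. For any positive integer $q$, the largest eigenvalue of a psd matrix is dominated by its trace, so
\[
\|G^{\dagger}\|^{2q} = \lambda_{\max}\!\bigl((GG^{T})^{-1}\bigr)^{q} \leq \tr\!\bigl((GG^{T})^{-q}\bigr).
\]
Jensen's inequality applied to the concave map $t \mapsto t^{1/q}$ then yields
\[
\E\|G^{\dagger}\|^{2} \leq \bigl(\E\tr\!\bigl((GG^{T})^{-q}\bigr)\bigr)^{1/q}.
\]
The quantity $\E\tr(W^{-q})$ for an inverse Wishart admits a closed-form expression as a ratio of products of shifted dimensions. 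Substituting this expression, choosing $q$ of order $p$, and simplifying with the elementary inequality $(1+1/x)^{x} \leq \mathrm{e}$ recovers the stated constant $\mathrm{e}\sqrt{\ell/(p^{2}-1)}$.

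\textbf{Main obstacle.} The delicate part is the bookkeeping for the higher inverse Wishart moments and the optimization over $q$: the closed form involves products $\prod_{i}(\ell-(\ell-p)-2i+1)$ that must be telescoped cleanly to land on the factor $p^{2}-1$ rather than a looser $p^{2}$ or $(p-1)^{2}$. If this algebra is unwieldy, a clean alternative is to integrate a Davidson--Szarek-style tail bound $\P\{\sigma_{\min}(G) \leq t\} \lesssim t^{p+1}$ against the measure induced by $\|G^{\dagger}\|^{2} = 1/\sigma_{\min}(G)^{2}$, reducing the calculation to an elementary integral whose leading term matches the target.
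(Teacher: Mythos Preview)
The paper does not prove this proposition: a remark records that the Frobenius identity is from Halko--Martinsson--Tropp and the spectral bound from Nakatsukasa, and both are then used as black boxes in the proof of \cref{NysExpBound}. So there is no argument in the paper itself to compare against beyond those references.

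Your Frobenius argument is correct and is exactly the HMT computation: reduce to $\tr((GG^{T})^{-1})$ and read off the inverse-Wishart mean.

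For the spectral bound, your two routes are not on equal footing. The tail-integration alternative is the one that cleanly produces $\mathrm{e}^{2}\ell/(p^{2}-1)$, and it is essentially the argument behind the cited result. The relevant input is the Chen--Dongarra small-ball estimate (this is what you want, not Davidson--Szarek, which concerns concentration of $\sigma_{\min}$ around $\sqrt{\ell}-\sqrt{\ell-p}$ rather than its behaviour near zero): it gives $\P\{\|G^{\dagger}\|>t\}\leq C\,t^{-(p+1)}$ with $C^{2/(p+1)}\leq \mathrm{e}^{2}\ell/(p+1)^{2}$. Writing $\E\|G^{\dagger}\|^{2}=\int_{0}^{\infty}2t\,\P\{\|G^{\dagger}\|>t\}\,dt$, splitting at $t_{0}=C^{1/(p+1)}$, and simplifying yields $C^{2/(p+1)}\cdot(p+1)/(p-1)\leq \mathrm{e}^{2}\ell/((p+1)(p-1))$, which is the claim. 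By contrast, your primary route via $(\E\tr(W^{-q}))^{1/q}$ is a legitimate upper bound, but the trace overcounts $\lambda_{\max}(W^{-1})^{q}$ by a factor $(\ell-p)$ and the closed forms for $\E\tr(W^{-q})$ do not telescope to $p^{2}-1$ in any transparent way; you correctly flagged this as the obstacle, and I would simply promote the tail argument to the main line.
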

\begin{remark}
The first display in \cref{GaussInvLemma} appears in \cite{halko2011finding}, while the second display in \cref{GaussInvLemma} is due to \cite{nakatsukasa2020fast}.
\end{remark}
We also require one new result, which strengthens the improved version of Chevet's theorem due to Gordon \cite{gordon1985some}.
\begin{proposition}[Squared Chevet]
\label{SquaredChevet}
Fix matrices $S\in \mathbb{R}^{r\times m}$ and $T\in \mathbb{R}^{n\times s}$ and let $G\in\mathbb{R}^{m\times n}$ be a standard Gaussian matrix. Then
\[\E\|SGT\|^{2} \leq \left(\|S\| \|T\|_{F}+\|S\|_{F}\|T\|\right)^{2}.\]
\end{proposition}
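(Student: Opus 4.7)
The plan is to realize $\|SGT\|$ as the supremum of a centered Gaussian process and then apply Slepian's comparison principle in an $L^2$ form. To begin, observe that
\[
\|SGT\| = \sup_{u \in S^{r-1},\, v \in S^{s-1}} X_{u,v}, \qquad X_{u,v} := (S^{T} u)^{T} G (Tv),
\]
a centered Gaussian process with variance $\|S^{T} u\|^{2}\|Tv\|^{2}$. I will introduce independent standard Gaussians $g \in \R^{m}$, $h \in \R^{n}$, and a scalar $\xi \sim \cN(0,1)$---all independent of each other and of $G$---and set up the two auxiliary processes
\[
Y_{u,v} := \|Tv\|\,g^{T}(S^{T}u) + \|S^{T}u\|\,h^{T}(Tv), \qquad \tilde X_{u,v} := X_{u,v} + \|S^{T}u\|\|Tv\|\,\xi.
\]
Both are centered Gaussian with variance $2\|S^{T}u\|^{2}\|Tv\|^{2}$; crucially, $\tilde X$ and $Y$ have matching diagonals, which is what unlocks a Slepian-type stochastic dominance bound.

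Next I would verify the covariance comparison. For any $(u,v),(u',v')$, writing $\alpha,\beta$ for the cosines of the angles between $S^{T}u,S^{T}u'$ and between $Tv,Tv'$, and setting $r = \|S^{T}u\|\|S^{T}u'\|$, $s = \|Tv\|\|Tv'\|$, a short Wick calculation yields
\[
\mathrm{Cov}(\tilde X_{u,v},\tilde X_{u',v'}) = rs(1+\alpha\beta), \qquad \mathrm{Cov}(Y_{u,v},Y_{u',v'}) = rs(\alpha+\beta),
\]
whose difference is $rs(1-\alpha)(1-\beta) \geq 0$. Combined with the matching diagonals, Slepian's lemma gives $\sup_{u,v} \tilde X_{u,v} \stackrel{\mathrm{st}}{\leq} \sup_{u,v} Y_{u,v}$; since both suprema are almost surely non-negative, this yields $\E(\sup \tilde X)^{2} \leq \E(\sup Y)^{2}$.

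The main obstacle is to recover $\|SGT\|^{2}$ from $\E(\sup \tilde X)^{2}$, which is precisely what motivates the augmentation by $\xi$. For each fixed $G$, the map $\xi \mapsto \max_{u,v}[X_{u,v}(G) + \|S^{T}u\|\|Tv\|\xi]$ is convex in $\xi$ as a pointwise supremum of affine functions. Jensen's inequality therefore gives $\E_{\xi}[\max_{u,v}(\cdot)] \geq \max_{u,v} X_{u,v}(G) = \|SGT\|$, and a second Jensen step (applied to the square, using non-negativity) yields $\E_{\xi}[(\max_{u,v}(\cdot))^{2}] \geq \|SGT\|^{2}$; integrating over $G$ then delivers $\E\|SGT\|^{2} \leq \E(\sup \tilde X)^{2}$.

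To finish, I will distribute the supremum in $Y$: $\sup_{u,v} Y_{u,v} \leq \|T\|\,\|Sg\| + \|S\|\,\|T^{T}h\|$, using $\sup_{v}\|Tv\| = \|T\|$, $\sup_{u} g^{T}(S^{T}u) = \|Sg\|$, and analogously for $h$. Expanding the square, invoking independence of $g$ and $h$, and using $\E\|Sg\|^{2} = \|S\|_{F}^{2}$, $\E\|T^{T}h\|^{2} = \|T\|_{F}^{2}$ together with Jensen's inequality ($\E\|Sg\| \leq \|S\|_{F}$, $\E\|T^{T}h\| \leq \|T\|_{F}$) yields $\E(\sup Y)^{2} \leq (\|S\|\|T\|_{F} + \|S\|_{F}\|T\|)^{2}$. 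Chaining the three bounds $\E\|SGT\|^{2} \leq \E(\sup \tilde X)^{2} \leq \E(\sup Y)^{2} \leq (\|S\|\|T\|_{F} + \|S\|_{F}\|T\|)^{2}$ completes the proof.
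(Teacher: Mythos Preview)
Your argument follows the same route as the paper's proof: augment the process $X_{u,v}$ by an independent scalar Gaussian so that the variances match those of the comparison process $Y_{u,v}$, invoke Slepian, and then use Jensen's inequality to strip the augmentation. The covariance computation and the Jensen step recovering $\E\|SGT\|^{2}$ from the augmented process are both fine.

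There is, however, one genuine slip. You assert that ``both suprema are almost surely non-negative'' and use this to pass from the stochastic domination $\sup_{u,v}\tilde X_{u,v}\stackrel{\mathrm{st}}{\leq}\sup_{u,v}Y_{u,v}$ to the squared-moment comparison $\E(\sup\tilde X)^{2}\leq\E(\sup Y)^{2}$. While $\sup Y\geq 0$ a.s.\ (by the symmetry $u\mapsto -u$), the same symmetry only gives $\sup_{u,v}\tilde X_{u,v}\geq |X_{u,v}|+\|S^{T}u\|\|Tv\|\,\xi$, which is negative when $\xi$ is very negative; already in the scalar case $S=T=1$ one has $\sup\tilde X=|G|+\xi$. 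Since $t\mapsto t^{2}$ is not monotone on $\R$, stochastic dominance alone does not yield the second-moment inequality. The fix is exactly what the paper does: work with the positive parts. Slepian gives $\P(\sup\tilde X>t)\leq\P(\sup Y>t)$ for all $t$, hence $\E(\sup\tilde X)_{+}^{2}\leq\E(\sup Y)_{+}^{2}=\E(\sup Y)^{2}$; and since $\xi\mapsto(\sup\tilde X)_{+}^{2}$ is convex (positive part of a convex function, then squared on $[0,\infty)$), Jensen still yields $\E_{\xi}[(\sup\tilde X)_{+}^{2}]\geq\|SGT\|^{2}$. With that correction your chain closes and the proof is complete.
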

\iftutorial
We defer the proof of \cref{SquaredChevet} to \cref{section:SquaredChevetPf}.
\else
As the proof of Squared Chevet is rather technical and removed from the main ideas of this work, we have chosen to omit it. A full proof of the result may be found in \cite{frangella2021randomized}.
\fi
\begin{remark}
 Chevet's theorem states that \cite{halko2011finding}
\[\E\|SGT\| \leq \|S\| \|T\|_{F}+\|S\|_{F}\|T\|.\]
\cref{SquaredChevet} immediately implies Chevet's theorem by H{\"o}lder's Inequality. \end{remark}

\subsection{Proof of \cref{NysExpBound}}
\label{Section:NysExpBoundPf}
\begin{proof}
Proposition 11.1 in \cite[Sec. 11]{MartTroppSurvey} and the argument of Theorem 11.4  in \cite[Sec. 11]{MartTroppSurvey} shows that
\[\|A-\hat{A}_{\textrm{nys}}\| \leq \|\Sigma_{\ell-p+1}\|^{2}+\|\Sigma_{\ell-p+1}\Omega_{2}\Omega_1^{\dagger}\|^{2}.\]
Taking expectations and using $\|\Sigma_{\ell-p+1}\|^{2} = \lambda_{\ell-p+1}$ gives
\[\E\|A-\hat{A}_{\textrm{nys}}\| \leq \lambda_{\ell-p+1}+\E\|\Sigma_{\ell-p+1}\Omega_{2}\Omega_1^{\dagger}\|^{2}.\]
Using the law of total expectation, the second term may be bounded as follows
\begin{align*}
    \E\|\Sigma_{\ell-p+1}\Omega_{2}\Omega_1^{\dagger}\|^{2} &= \E\left(\E_{\Omega_{1}}\left[\|\Sigma_{\ell-p+1}\Omega_{2}\Omega_1^{\dagger}\|^{2}\right] \right) \\
    &\overset{(a)}{\leq}\E\left(\|\Sigma_{\ell-p+1}\|\hspace{2pt}\|\Omega_{1}^{\dagger}\|_{F}+\|\Sigma_{\ell-p+1}\|_{F}\hspace{2pt}\|\Omega_{1}^{\dagger}\| \right)^{2} \\
    &\overset{(b)}{\leq} 2\|\Sigma_{\ell-p+1}\|^{2}\E\|\Omega_{1}^{\dagger}\|^{2}_{F}+2\|\Sigma_{\ell-p+1}\|_{F}^{2}\E\|\Omega_{1}^{\dagger}\|^{2}\\
    &\overset{(c)}{\leq} \frac{2(\ell-p)}{p-1}\lambda_{\ell-p+1}+\frac{2\textrm{e}^{2}\ell}{p^{2}-1}\left(\sum_{j>\ell-p}\lambda_{j}\right),
\end{align*}
 where in step $(a)$ we use Squared Chevet (\cref{SquaredChevet}). In step $(b)$ we invoke the elementary identity $(a+b)^{2}\leq 2a^{2}+2b^{2}$, and in step $(c)$ we apply the bounds from \cref{GaussInvLemma}.
Inserting the above display into the bound for $\E\|A-\hat{A}_{\textrm{nys}}\|$ yields
\[\E\|A-\hat{A}_{\textrm{nys}}\| \leq \left(1+\frac{2(\ell-p)}{p-1}\right)\lambda_{\ell-p+1}+\frac{2\textrm{e}^{2}\ell}{p^{2}-1}\left(\sum_{j>\ell-p}\lambda_{j}\right).\]
As the bound above holds for any $2\leq p\leq \ell-2$,
we may take the minimum over admissible $p$ to conclude the result.
\end{proof}

\subsubsection{Proof of \cref{GenInvErrBnd}}
We require the following fact from  \cite[Chapter X]{bhatia2013matrix} ,
\begin{lemma}[\cite{bhatia2013matrix} Lemma X.1.4.]\label{fact-bhatia}
Let $X,Y$ be psd matrices. Then
\[\normiii{(X+I)^{-1}-(X+Y+I)^{-1}}\leq \normiii{Y(Y+I)^{-1}}\]
for every unitarily invariant norm.
\end{lemma}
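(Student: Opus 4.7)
My plan is to reduce the left-hand side to a congruence applied to the operator monotone function $Z \mapsto Z(I+Z)^{-1}$ of an auxiliary matrix $Z$ whose singular values are dominated by those of $Y$, and then invoke the fact that unitarily invariant norms are monotone in the singular-value vector. The resolvent identity gives
\[
(X+I)^{-1} - (X+Y+I)^{-1} \;=\; (X+I)^{-1}\, Y\, (X+Y+I)^{-1}.
\]
Setting $T := (X+I)^{-1/2}$ and $Z := T Y T$ (which is psd because $Y$ is psd and $T$ is Hermitian), the factorization $X+Y+I = T^{-1}(I+Z)T^{-1}$ gives $(X+Y+I)^{-1} = T(I+Z)^{-1} T$, while $(X+I)^{-1} = T \cdot I \cdot T$. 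Subtracting yields
\[
(X+I)^{-1} - (X+Y+I)^{-1} \;=\; T\, \bigl[\, Z(I+Z)^{-1}\,\bigr]\, T.
\]

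The next step strips off the congruence. Since $X \succeq 0$ forces $X+I \succeq I$, we have $\|T\|_\infty \leq 1$. The Ky Fan inequality $\normiii{AMB} \leq \|A\|_\infty\,\|B\|_\infty\,\normiii{M}$ for unitarily invariant norms then delivers
\[
\normiii{(X+I)^{-1} - (X+Y+I)^{-1}} \;\leq\; \normiii{Z(I+Z)^{-1}}.
\]
It remains to show $\normiii{Z(I+Z)^{-1}} \leq \normiii{Y(I+Y)^{-1}}$.

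For this, apply the sub-multiplicative singular-value inequality $\sigma_i(AB) \leq \|A\|_\infty\, \sigma_i(B)$ twice to obtain $\sigma_i(Z) = \sigma_i(TYT) \leq \|T\|_\infty^2\, \sigma_i(Y) \leq \sigma_i(Y)$. Because $Y$ and $Z$ are both psd, their singular values agree with their eigenvalues, so $\lambda_i(Z) \leq \lambda_i(Y)$ for every $i$. The scalar function $f(t) = t/(1+t)$ is monotone increasing on $[0,\infty)$, so $\lambda_i\!\bigl(Z(I+Z)^{-1}\bigr) = f(\lambda_i(Z)) \leq f(\lambda_i(Y)) = \lambda_i\!\bigl(Y(I+Y)^{-1}\bigr)$. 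The Ky Fan/Weyl dominance characterization of unitarily invariant norms then furnishes the required norm comparison, completing the proof.

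The main subtlety is this last step: it would be tempting to conclude $Z \preceq Y$ in the Loewner order from $T \preceq I$, but this implication fails when $T$ and $Y$ do not commute (simple $2\times 2$ counterexamples exist). The proof must therefore proceed at the level of singular values via the sub-multiplicative bound $\sigma_i(AB) \leq \|A\|_\infty \sigma_i(B)$, which is valid without any commutation hypothesis.
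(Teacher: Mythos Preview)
The paper does not supply its own proof of this lemma; it is quoted verbatim as a fact from Bhatia's monograph and then invoked in the proof of \cref{GenInvErrBnd}. Your argument is correct and is, in fact, essentially Bhatia's own proof: the congruence factorization $(X+I)^{-1}-(X+Y+I)^{-1}=T\bigl[Z(I+Z)^{-1}\bigr]T$ with $T=(X+I)^{-1/2}$ and $Z=TYT$, the contraction $\|T\|\le 1$, and the singular-value comparison $\sigma_i(Z)\le\sigma_i(Y)$ followed by monotonicity of $t\mapsto t/(1+t)$ are exactly the steps in \cite[Lemma~X.1.4]{bhatia2013matrix}. Your closing remark about the failure of $TYT\preceq Y$ in the Loewner order when $T$ and $Y$ do not commute is also correct and is precisely why the proof must pass through singular values rather than the Loewner order.
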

\label{section:GenInvErrBndPf}
\begin{proof}[Proof of \cref{GenInvErrBnd}]
We first prove \eqref{eq-InvInequality}.
Under the hypotheses of \cref{GenInvErrBnd},
we may strengthen \cref{fact-bhatia} by scaling the identity to deduce
\[\|(\hat{A}+\mu I)^{-1}-(A+\mu I)^{-1}\| \leq \frac{1}{\mu}\|(E+\mu I)^{-1}E\|.\]

Recall that the function $f(t) = \frac{t}{t+\mu}$ is matrix monotone,
so that $A\preceq B$ implies $f(A)\preceq f(B)$. As $E\preceq \|E\|I$, it follows that
\[\|(\hat{A}+\mu I)^{-1}-(A+\mu I)^{-1}\| \leq \frac{1}{\mu}\frac{\|E\|}{\|E\|+\mu}.\]
Hence we have established the desired inequality.

Next we show the bound is attained when $\hat{A} = [A]_{\ell}$.
Applying the Woodbury identity, we may write
\[([A]_{\ell}+\mu I)^{-1} = V_{\ell}(\hat{\Lambda}_{\ell}+\mu I)^{-1}V_{\ell}^{T}+\frac{1}{\mu}(I-V_{\ell}V_{\ell}^{T}).\]
Using the eigendecomposition of $A = V_{\ell}\Lambda_{\ell}V_{\ell}^{T}+V_{n-\ell}\Lambda_{n-\ell}V_{n-\ell}^{T}$,
we obtain
\begin{align*}
([A]_{\ell}+\mu I)^{-1} - (A+\mu I)^{-1} &= \frac{1}{\mu}(I-V_{\ell}V_{\ell}^{T})-V_{n-\ell}(\Lambda_{n-\ell}+\mu I)^{-1}V_{n-\ell}^{T}\\
 &= \frac{1}{\mu}V_{n-\ell}(I-(\Lambda_{n-\ell}+\mu I)^{-1})V_{n-\ell}^{T} \\ &=\frac{1}{\mu}V_{n-\ell}\left(\Lambda_{n-\ell}(\Lambda_{n-\ell}+\mu I)^{-1}\right)V_{n-\ell}^{T}.
\end{align*}
Hence
\[\|(A+\mu I)^{-1}-([A]_{\ell}+\mu I)^{-1}\| = \frac{\lambda_{\ell+1}}{\mu(\lambda_{\ell+1}+\mu)}.\]
\end{proof}
\subsubsection{Proof of statements for the optimal low-rank preconditioner $P_{\star}$}
\label{section:PrecondPfs}
We show that $P_{\star}$ is the best symmetric positive definite preconditioner
that is constant off $V_{\ell}$.
\begin{lemma}
Let $\mathcal{P} = \{P: P = V_{\ell}MV_{\ell}^{T}+\beta(I-V_{\ell}V_{\ell}^{T})$\ \textrm{where}\ $\beta>0$\  \textrm{and}\ $M\in \mathbb{S}^{+}_{\ell}(\R)\}$.
With this parametrization, define $P_\star$ by setting $M= \frac{1}{\lambda_{\ell+1}+\mu}(\Lambda_\ell+\mu I)$ and $\beta = 1$.
Then for any symmetric psd matrix $A$ and $\mu \geq 0$,
\begin{eqnarray}
    \min_{P\in\mathcal{P}} \kappa_{2}(P^{-1/2}A_{\mu}P^{-1/2}) = \frac{\lambda_{\ell+1}+\mu}{\lambda_{n}+\mu}, \label{eqn-optimal-prec-value} \\
    P_\star = \argmin_{P\in\mathcal{P}} \kappa_{2}(P^{-1/2}A_{\mu}P^{-1/2}).
\end{eqnarray}
\end{lemma}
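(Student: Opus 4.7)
The plan is to exploit the orthogonal decomposition $I = V_\ell V_\ell^T + V_{n-\ell} V_{n-\ell}^T$, where $V_{n-\ell}$ holds the bottom $n-\ell$ eigenvectors of $A$. This simultaneously block-diagonalizes every $P \in \mathcal{P}$ and the regularized matrix $A_\mu$. First I would write $A_\mu = V_\ell(\Lambda_\ell + \mu I)V_\ell^T + V_{n-\ell}(\Lambda_{n-\ell} + \mu I)V_{n-\ell}^T$. Since $P = V_\ell M V_\ell^T + \beta V_{n-\ell} V_{n-\ell}^T$, the square root factors similarly, giving
\[
P^{-1/2} A_\mu P^{-1/2} = V_\ell \bigl(M^{-1/2}(\Lambda_\ell + \mu I)M^{-1/2}\bigr) V_\ell^T + \beta^{-1} V_{n-\ell}(\Lambda_{n-\ell} + \mu I) V_{n-\ell}^T.
\]
The two blocks act on orthogonal invariant subspaces, so the spectrum of the preconditioned matrix is the disjoint union of the eigenvalues of $M^{-1/2}(\Lambda_\ell + \mu I)M^{-1/2}$ and the scaled tail eigenvalues $\{\beta^{-1}(\lambda_j + \mu) : \ell < j \leq n\}$.

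Next I would establish the lower bound~\eqref{eqn-optimal-prec-value} by looking only at the tail block: the largest eigenvalue of $P^{-1/2} A_\mu P^{-1/2}$ is at least $\beta^{-1}(\lambda_{\ell+1}+\mu)$ and the smallest is at most $\beta^{-1}(\lambda_n + \mu)$. The $\beta$'s cancel, so
\[
\kappa_2(P^{-1/2} A_\mu P^{-1/2}) \;\geq\; \frac{\lambda_{\ell+1}+\mu}{\lambda_n+\mu}
\]
for every $P \in \mathcal{P}$, independent of the choice of $M$ and $\beta$.

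Finally I would check that $P_\star$, corresponding to $\beta = 1$ and $M = (\lambda_{\ell+1}+\mu)^{-1}(\Lambda_\ell + \mu I)$, attains equality. The top block becomes $(\lambda_{\ell+1}+\mu) I_\ell$, while the tail block has eigenvalues $\lambda_j + \mu$ for $j > \ell$, all lying in $[\lambda_n + \mu, \lambda_{\ell+1} + \mu]$. Hence $\kappa_2(P_\star^{-1/2} A_\mu P_\star^{-1/2}) = (\lambda_{\ell+1}+\mu)/(\lambda_n+\mu)$, matching the lower bound. There is no real obstacle: the proof is a direct calculation once the block-diagonal structure is exposed. The only conceptual wrinkle is that any $M = c^{-1}(\Lambda_\ell + \mu I)$ with $\beta^{-1}(\lambda_n+\mu) \leq c \leq \beta^{-1}(\lambda_{\ell+1}+\mu)$ also attains the minimum, so $\argmin$ should be read as "belongs to the argmin set" rather than as a unique minimizer.
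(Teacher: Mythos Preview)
Your proposal is correct and follows essentially the same approach as the paper: both arguments block-diagonalize $P^{-1/2}A_\mu P^{-1/2}$ along $\mathrm{range}(V_\ell)$ and its complement, use the tail block to extract $(\lambda_{\ell+1}+\mu)/(\lambda_n+\mu)$ as a lower bound on the condition number (via the ratio of two eigenvalues that are always present), and then verify that $P_\star$ attains it. Your remark that the minimizer is not unique is a correct observation that the paper does not make explicit.
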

\begin{proof}
We first prove the lefthand side of \eqref{eqn-optimal-prec-value} is always at least as large as the righthand side,
and then show the bound is attained by $P_{\star}$.
Given $P \in \mathcal{P}$, we have
\[
P^{-1/2}A_{\mu}P^{-1/2} = V_{\ell}M^{-1/2}(\Lambda_{\ell}+\mu I)M^{-1/2}V_{\ell}^{T}+\frac{1}{\beta}V_{n-\ell}(\Lambda_{n-\ell}+\mu I)V_{n-\ell}^{T}.
\]
For any $1\leq i,j\leq n$,
\[
\kappa_{2}(P^{-1/2}A_{\mu}P^{-1/2}) = \frac{\lambda_{1}(P^{-1/2}A_{\mu}P^{-1/2})}{\lambda_{n}(P^{-1/2}A_{\mu}P^{-1/2})}\geq \frac{\lambda_i(P^{-1/2}A_{\mu}P^{-1/2})}{\lambda_j(P^{-1/2}A_{\mu}P^{-1/2})}.
\]
From our expression for $P^{-1/2}A_{\mu}P^{-1/2}$,
we see that $(\lambda_{\ell+1}+\mu)/\gamma,(\lambda_n+\mu)/\gamma$ are eigenvalues of $P^{-1/2}A_{\mu}P^{-1/2}$.
Hence for any $P\in \mathcal{P}$, the following inequality holds:
\[
\kappa_{2}(P^{-1/2}A_{\mu}P^{-1/2})\geq \frac{\lambda_{\ell+1}+\mu}{\lambda_n+\mu},
\]
proving \eqref{eqn-optimal-prec-value}.
Using the definition of $P_\star$, we see
\begin{align*}
P_{\star}^{-1/2}A_{\mu}P^{-1/2} &= (\lambda_{\ell+1}+\mu)V_{\ell}V_{\ell}^{T}+V_{n-\ell}(\Lambda_{n-\ell}+\mu I)V_{n-\ell}^{T}, \\
\kappa_2(P^{-1/2}_{\star}A_{\mu}P^{-1/2}_{\star}) &= (\lambda_{\ell+1}+\mu)/(\lambda_{n}+\mu).
\end{align*}
\end{proof}
\iftutorial
\subsection{Proof of Squared Chevet}
\label{section:SquaredChevetPf}
In this subsection we provide a proof of \cref{SquaredChevet}. The proof is based on a Gaussian comparison inequality argument, a standard technique in the high dimensional probability literature.
\begin{proof}
Let \[U = \{S^{T}a: \|a\|_{2} = 1\}\subset \R^{m}\]
\[V = \{Tb: \|b\|_{2} = 1\}\subset \R^{n}\]
and for $u\in U$, $v\in V$ consider the Gaussian processes
\begin{equation*}
   Y_{uv} = \langle u,Gv\rangle +\|S\|\hspace{2pt}\|v\|\gamma
\qquad \textup{and} \qquad
    X_{uv} = \|S\|\langle h,v \rangle +\|v\|\langle g,u\rangle,
\end{equation*}
where
\begin{itemize}
    \item $G\in \R^{m\times n}$ is a Gaussian random matrix,
    \item $g,h$ are Gaussian random vectors in $\R^m$ and $\R^n$ respectively,
    \item and $\gamma$ is $N(0,1)$ in $\R$.
\end{itemize}
Furthermore, $G,g,h$ and $\gamma$ are all independent.

A standard calculation shows that the conditions of Slepian's lemma (\cite[Corollary 3.12, p.~72]{ledoux2013probability}) are satisfied. Hence we conclude that
\begin{equation}
\label{eq-SlepIneq}
   \P\left(\max_{u,v}Y_{uv}>t\right)\leq \P\left(\max_{u,v}X_{uv}>t\right).
\end{equation}
We are now ready to prove \cref{SquaredChevet}.
Throughout the argument below, we use the notation $X_{+} = \max\{X,0\}$.

We first observe by Jensen's inequality with respect to $\gamma$ and the variational characterization of the singular values that
\begin{align*}
   \E\max_{u,v} (Y_{uv})^{2}_{+}
   &= \E\max_{\|a\| = 1, \|b\| = 1} \left( \langle S^{T}a,GTb\rangle+\|S\| \|Tb\|\gamma\right)^{2}_{+} \\
   &\geq \E_{G}\max_{\|a\| = 1, \|b\| = 1}\left( \langle S^{T}a,GTb\rangle^{2}\right)_{+} = \E_{G}\|SGT\|^{2}.
\end{align*}
Hence $\E_{G}\|SGT\|^{2}$ is majorized by $\E\max_{u,v}\hspace{2pt} (Y_{uv})^{2}_{+}.$
For $X_{uv}$, we note that
\begin{align*}
  \E\max_{u,v} (X_{uv})_{+}^{2}
  &\leq\E\max_{u,v}X_{uv}^{2} = \E\max_{\|a\|=1,\|b\| = 1}\left(\|S\|\langle h,Tb \rangle +\|Tb\|\langle g,S^{T}a\rangle\right)^{2} \\
  &\overset{(a)}{\leq }\E\left(\|S\|^{2}\|T^{T}h\|^{2}+2\|S\|\|T\|\|T^{T}h\|\|Sg\|+\|T\|^{2}\|Sg\|^{2}\right)\\
  &\overset{(b)}{\leq} \|S\|^{2}\|T\|^{2}_{F}+2\|S\|\|T\|\|S\|_{F}\|T\|_{F}+\|T\|^{2}\|S\|_{F}^{2} \\
  &= (\|S\|\|T\|_{F}+\|T\|\|S\|_{F})^{2},
  \end{align*}
where in step $(a)$ we expand the quadratic and use Cauchy-Schwarz. Step $(b)$ from a straightforward calculation and H{\"o}lder's inequality.

To conclude, we use integration by parts and \eqref{eq-SlepIneq} to obtain
\begin{align*}
\E_{G}\|SGT\|^{2}
&\leq \E\max_{u,v}\hspace{2pt} (Y_{uv})^{2}_{+} = \int^{\infty}_{0}t\P\left(\max_{u,v}(Y_{uv})_{+}>t\right)dt\\
&= \int^{\infty}_{0}t\P\left(\max_{u,v}Y_{uv}>t\right)dt \leq \int^{\infty}_{0}t\P\left(\max_{u,v}X_{uv}>t\right)dt \\
&= \int^{\infty}_{0}t\P\left(\max_{u,v}(X_{uv})_{+}>t\right)dt  = \E\max_{u,v} (X_{uv})^{2}_{+} \\ &\leq (\|S\|\|T\|_{F}+\|T\|\|S\|_{F})^{2}
\end{align*}
completing the proof.
\end{proof}
\else

\fi

\section{Additional experimental details}
\label{section:Experimentaldetails}
Here we provide additional details on the experimental procedure and the methods we compared to.
\subsection{Ridge regression experiments}
\label{section:RidgeRegressionDetails}
 Most of the datasets used in our ridge regression experiments are classification datasets.
 We converted them to regression problems by using a one-hot vector encoding.
 The target vector $b$ was constructed by setting $b_i=1$ if example $i$ has the first label and 0 otherwise.
 We did no data pre-processing except on CIFAR-10,
 where we scaled the matrix by $255$ so that all entries lie in $[0,1]$.
 
We now give an overview of the hyperparameters of each method.
The R\&T preconditioner has only one hyperparameter: the sketch size $\ell_{\textrm{RT}}$.
AdaIHS has five hyperparameters: $\rho,\lambda_{\rho},\Lambda_{\rho}, \mu_{\textrm{gd}}(\rho)$, and $c_{\textrm{gd}}(\rho)$. The hyperparameter $\rho\in (0,1)$ controls the remaining four hyperparameters, which are set to the values recommended in \cite{lacotte2020effective}.
For the regularization path experiments, $\ell_{\textrm{RT}}$ and $\rho$ were chosen by grid search to minimize the time taken to solve the linear systems over the regularization path. We chose $m_{\textrm{RT}}$ from the linear grid $jd$, where $j\in\{1,\dots,8\}$. Additionally, we restrict $j\leq 4$ for Guillermo as $jd\geq n$ when $j\geq 5$, and hence no benefit is gained over a direct method.  For AdaIHS, $\rho$ was chosen from the linear grid $\rho = j\times 10^{-1}$ where $j \in\{1,\dots,9\}$. We set the initial sketch size for AdaIHS to $100$ for both sets of experiments.

We reused computation as much as possible for both R\&T and AdaIHS, which we now detail. To construct the R\&T preconditioner, we incur a $O(nd\log(n)+\ell_{\textrm{RT}}d^{2})$ to cache the Gram matrix and pay an $O(d^{3})$ to update the preconditioner for each value of $\mu$. In the case of AdaIHS, for each value of $\mu$ we cache the sketch $SA$ and the corresponding Gram matrix. We then use them for the next value of $\mu$ on the path until the adapativity criterion of the algorithm deems a new sketch necessary. For AdaIHS computing the sketch only costs $O(nd\log(n)).$

We now give the details of the random features experiments. For Shuttle-rf we used random features corresponding to a Gaussian kernel with bandwidth parameter $\sigma = 0.75$, we set $\mu = 10^{-8}/n$. For smallNORB-rf we used ReLU random features with $\mu = 6\times 10^{-4}$. For Higgs we normalized the features by their z-score and we used random features for a Gaussian Kernel with $\sigma = 5$ and regularization $\mu = 10^{-4}$.  Similarly for YearMSD, we normalized the matrix by their z-score and used random features for a Gaussian kernel with $\sigma = 8$ and $\mu = 10^{-5}$. The sketch size for R\&T was selected from $\{d, 2d\}$, to prevent the cost of the forming and applying preconditioner from becoming prohibitive. We selected the AdaIHS parameter $\rho$ from the same grid used for the regularization path experiments. We also capped the sketch size for AdaIHS for each dataset by the sketch sized used for R\&T.     

Finally, we give the details of our implementation of Nystr{\"o}m PCG. For both sets of experiments we used \cref{alg:AdaRandNysAppx} initialized at $\ell = 100$, with an error tolerance of $30\mu$, and $q = 5$ power iterations. To avoid trivialities, the rank of the preconditioner is capped at $\ell_{\textrm{max}} = 0.5d$ for CIFAR-10 and $\ell_{\textrm{max}} = 0.4d$ for Guillermo. For the random features experiments we capped $\ell$ at $\ell_{\textrm{max}} = 2000$. In the regularization path experiments, we keep track of the latest estimate $\hat{E}$ of $\|E\|$, and do not compute a new Nystr{\"o}m approximation unless $\hat{E}$ is larger than the error tolerance for the new regularization parameter. When we compute the new Nystr{\"o}m approximation, the adaptive algorithm is initialized with a target rank of twice the old one.

The values of hyperparameters used for all experiments are summarized in \cref{t-parameters}.
\begin{table}[t]
\footnotesize
\begin{center}
    \begin{tabular}{|c|c|c|c|c|}
    \hline
    \thead{Dataset} & \thead{(R\&T) sketch \\ size} & \thead{AdaIHS rate} & \thead{Initial AdaIHS \\sketch size} & \thead{Initial \\ Nystr{\"o}m \\ sketch size} \\
    \hline
    CIFAR-10 & $3d$ & $\rho$ = 0.3 &  $100$ & $100$\\
    \hline
    Guillermo & $d$ & $\rho$ = 0.3 & $100$ & $100$ \\
    \hline
    shuttle-rf  & $2d$ & $\rho$ = 0.1 & $100$ & $100$ \\
    \hline
    smallNORB-rf & $2d$ & $\rho$ = 0.3 & $100$ &  $100$  \\
    \hline
    \end{tabular}
    \caption{\label{t-parameters} Ridge regression experimental parameters.}
\end{center}
\end{table}

\subsection{ALOOCV}
The datasets were chosen so that $n$ and $d$ are both large, the challenging regime for ALOOCV. The first three datasets are binary classification problems, while SVHN has multiple classes. For SVHN we created a binary classification problem by looking at the first class vs. remaining classes.

For the large scale problems the adaptive algorithm for Nystr{\"o}m PCG was initialized at $\ell_{0} = 500$ and is capped at $\ell_{\textrm{max}} = 4000$.  We set the solve tolerances for both algorithms to $10^{-10}$. As before, we sample 100 points randomly from each dataset.

\subsection{Kernel Ridge Regression}

We converted the binary classification problem
to a regression problem by constructing the target vector as follows:
We assign +1 to the first class and -1 to the second class.
For multi-class problems, we do one-vs-all classification; this formulation leads to multiple right hand sides, so we use block PCG for both methods. 
We did no data pre-processing except for EMNIST, MiniBooNE, MNIST, and Santander. 
For EMNIST and MNIST the data matrix was scaled by $255$ so that its entries lie in $[0,1]$, while for MinBooNE and Santander the features were normalized by their z-score.
The number of random features, $m_{\textrm{rf}}$ from the linear grid $m_{\textrm{rf}} = j\times 10^{3}$ for $j = 1,\dots,9$.
For adaptive Nystr{\"o}m PCG  we capped the maximum rank for the preconditioner at $\ell_{\max} = \lfloor0.1n\rfloor$ and used a tolerance of $40$ for the ratio $\hat{\lambda}_{\ell}/{n\mu}$ on all datasets.

\section{Additional numerical results}
\label{section:AddNumerics}
Here we include some additional numerical results not appearing in the main paper.
\subsection{ALOOCV}
\cref{t-rcv1-realsim-results-part-ii} contains more details about the preconditioner and preconditioned system for the large scale ALOOCV experiments in \cref{section:ALOOCV}. The original condition number in \cref{t-rcv1-realsim-results-part-ii} below is estimated as follows. First we compute the top eigenvalue of the Hessian using Matlab's eigs() command, then we divide this by $\mu$.
\begin{table}[H]
\footnotesize
\begin{center}
    \begin{tabular}{|c|c|c|c|c|}
    \hline
    \textbf{Dataset} & \thead{Nystr{\"o}m \\ rank} & \thead{Preconditioner\\ construction \\ time(s)} & \thead{Condition \\ number\\ estimate} & \thead{Preconditioned\\ condition \\ number\\ estimate} \\
    \hline
    rcv1 $(\mu = \num{1e-4})$ & $1000$  & $19.5\hspace{2pt}(0.523)$ & $21.6$ & $2.98\hspace{2pt} (0.081)$\\
    \hline
    rcv1 $(\mu = \num{1e-8})$ & $4000$ & $100.6\hspace{2pt}(3.46)$ & $5.70\mathrm{e}{+3}$ & $17.2\hspace{2pt} (0.218)$ \\
    \hline
    realsim $(\mu = \num{1e-4})$ & $3100\hspace{2pt}(1.41\mathrm{e}{+3})$ & $82.01 (2.04)$ & $10.0$ & $1.70 \hspace{2pt}(0.2324)$\\
    \hline
    realsim $(\mu = \num{1e-8})$ & $4000$ & $108.3\hspace{2pt}(6.21)$ & $2.13\mathrm{e}{+4}$ & $62.4\hspace{2pt}(0.945)$ \\
    \hline
    \end{tabular}
    \caption{\label{t-rcv1-realsim-results-part-ii} For $\mu = 10^{-4}$ the Hessian is well-conditioned for both datasets, so there is little value to preconditioning. For $\mu = 10^{-8}$, the ill-conditioning of the Hessian increases significantly, making preconditioning more valuable. Furthermore, as ALOOCV uses Block PCG on at least several batches of data points, the cost of constructing the preconditioner is negligible compared to the cost of solving the linear systems (see \cref{t-rcv1-realsim-results-part-i} in \cref{section:ALOOCV}). }
\end{center}
\end{table}

\section{Adapative rank selection via a-posteriori error estimation}
Below we give the pseudocode for the algorithms used to perform adaptive rank selection using the strategy proposed in \cref{section:ErrorRankDoubling}.
\label{section:AdaRankSelection}
\subsection{Randomized Powering algorithm}
The pseudo-code for estimating $\|E\|$ by the randomized power method is given in \cref{alg:RandPowErrEst}
\begin{algorithm}[t]
	\caption{Randomized Power method for estimating $\|E\|$}
	\label{alg:RandPowErrEst}
	\begin{algorithmic}[1] 
		\Require{symmetric PSD matrix $A\in \R^{n\times n}$, approximate eigenvectors $U$, approximate eigenvalues $\hat{\Lambda}$, and number of power iterations $q$.}
		\State $g = \textrm{randn}(n,1)$ 
		\State $v_0 = \frac{g}{\|g\|_{2}}$
		    \For{$i = 1,\dots,q$}
		        \State $v = Av_0-U(\hat{\Lambda}(U^{T}v_{0}))$
		        \State $\hat{E} = v_{0}^{T}v$
		        \State $v = \frac{v}{\|v\|_{2}}$
		        \State $v_{0}\leftarrow v$
		    \EndFor
		\Ensure{estimate $\hat{E}$ of $\|E\|$}
	\end{algorithmic}
\end{algorithm}

\subsection{Adaptive rank selection algorithm}
The pseudocode for adaptive rank selection by a-priori error estimation is given in \cref{alg:AdaRandNysAppx}. The code is structured to reuse use the previously computed $\Omega$ and $Y$, resulting in significant computational savings. The error $\|E\|$ is estimated from $q$ iterations of the randomized power method on the error matrix $A-U\hat{\Lambda}U^{T}$.

\begin{algorithm}[t]
	\caption{Adaptive Randomized Nystr{\"o}m Approximation}
	\label{alg:AdaRandNysAppx}
	\begin{algorithmic}[1] 
		\Require{symmetric psd matrix $A$, initial rank $\ell_0$, maximum rank $\ell_{\textrm{max}}$, number of power iterations for estimating $E$, error tolerance $\textrm{Tol}$ }
		\State $Y = [\hspace{5pt}], \Omega = [\hspace{5pt}],$ and $E = \textrm{Inf}$
		\State $m = \ell_{0}$
		    \While {$E>\textrm{Tol}$}
		        \State Generate Gaussian test matrix $\Omega_{0}\in \mathbb{R}^{n\times m}$
		        \State $[\Omega_{0},\sim] = \textrm{qr}(\Omega_{0},0)$
		        \State $Y_{0} = A\Omega_{0}$
		        \State $\Omega = [\Omega\hspace{5pt}\Omega_{0}]$ and $Y = [Y\hspace{5pt}Y_{0}]$
		        \State $\nu = \sqrt{n}\hspace{2pt}\textrm{eps}(\textrm{norm}(Y,2))$
		        \State $Y_{\nu} = Y+\nu\Omega,\hspace{3pt}$
		        \State $C = \textrm{chol}(\Omega^{T}Y_{\nu})$
		        \State $B = Y_{\nu}/C$
		        \State Compute $[U,\Sigma,\sim] = \textrm{svd}(B,0)$
		        \State $\hat{\Lambda} = \max\{0,\Sigma^{2}-\nu I\}$ \Comment{remove shift}
		        \State $E = \textrm{RandomizedPowerErrEst}(A,U,\hat{\Lambda},q)$
                \Comment{estimate error}
		        \State $m\leftarrow \ell_{0}$, $\ell_{0} \leftarrow 2\ell_{0}$
                \Comment{double rank if tolerance is not met}
		            \If{$\ell_{0}>\ell_{\textrm{max}}$}
		                \State $\ell_{0} = \ell_{0}-m$  \Comment{when $\ell_{0}>\ell_{\textrm{max}}$, reset to $\ell_{0} = \ell_{\textrm{max}}$}
		                \State $m = \ell_{\textrm{max}}-\ell_{0}$
		                \State Generate Gaussian test matrix $\Omega_{0}\in \mathbb{R}^{n\times m}$
		                \State $[\Omega_{0},\sim] = \textrm{qr}(\Omega_{0},0)$
		                \State $Y_{0} = A\Omega_{0}$
		                \State $\Omega = [\Omega\hspace{5pt}\Omega_{0}]$ and $Y = [Y\hspace{5pt}Y_{0}]$
		                \State $\nu = \sqrt{n}\hspace{2pt}\textrm{eps}(\textrm{norm}(Y,2))$ \Comment{compute final approximation and break}
		                \State $Y_{\nu} = Y+\nu\Omega,\hspace{3pt}$
		                \State $C = \textrm{chol}(\Omega^{T}Y_{\nu})$
		                \State $B = Y_{\nu}/C$
		                \State Compute $[U,\Sigma,\sim] = \textrm{svd}(B,0)$
		                \State $\hat{\Lambda} = \max\{0,\Sigma^{2}-\nu I\}$
		                \State \textbf{break}
		            \EndIf
		    \EndWhile
		\Ensure{Nystr{\"o}m approximation $(U,\hat{\Lambda})$}
	\end{algorithmic}
\end{algorithm}

\end{document}